\documentclass[12pt]{article}
\usepackage{amssymb,amsmath,amsthm, amsfonts}
\usepackage{mathrsfs}
\usepackage{graphicx}
\usepackage{epsfig}
\usepackage{tikz}
\usepackage{mathtools}

\textwidth=16.5cm \textheight=23.5cm \headheight=0cm
\topmargin=1cm

\oddsidemargin 0cm \headsep=-1.0cm \raggedbottom

\theoremstyle{plain}
\newtheorem{theorem}{Theorem}[section]
\newtheorem{lemma}{Lemma}[section]

\theoremstyle{definition}
\newtheorem{definition}{Definition}[section]

\newtheorem{remark}{Remark}[section]

\setcounter{equation}{0}

\numberwithin{equation}{section}

\linespread{1.6}
\begin{document}
\allowdisplaybreaks
\title{\large\bf Asymptotic behavior of a quasilinear  Keller--Segel system
with  signal-suppressed motility}

\author{
{\rm Chi Xu$^{1}$, Yifu Wang$^{1,*}$
}\\[0.2cm]
{\it\small \rm School of Mathematics and Statistics,  Beijing Institute of Technology}\\
{\it\small \rm Beijing 100081, P.R. China$^1$}\\
}
\date{}

\maketitle
\renewcommand{\thefootnote}{\fnsymbol{footnote}}
\setcounter{footnote}{-1}
\footnote{$^{*}$Corresponding author. E-mail addresses:wangyifu@bit.edu.cn (Y.Wang), XuChi1993@126.com (C. Xu)}

\maketitle

\begin{abstract}
This paper is concerned with the density-suppressed motility model:
$u_{t}=\Delta (\displaystyle\frac{u^m}{v^\alpha}) +\beta uf(w),
v_{t}=D\Delta v-v+u,
w_{t}=\Delta w-uf(w)$
in a smoothly bounded  convex domain $\Omega\subset {\mathbb{R}}^2$, where $m>1$,  $\alpha>0, \beta>0$ and $D>0$ are parameters, the response function $f$ satisfies $f\in C^1([0,\infty)), f(0)=0, f(w)>0$ in $(0,\infty)$.  This system  describes the density-suppressed motility of  Eeshcrichia
coli cells in process of  spatio-temporal pattern  formation via  so-called
self-trapping mechanisms.   Based on the duality argument, it is shown that for suitable large $D$ the problem admits at least one
global weak solution $(u,v,w)$ which will asymptotically converge to the spatially uniform equilibrium   $(\overline{u_0}+\beta \overline{w_0},\overline{u_0}+\beta \overline{w_0},0)$ with $\overline{u_0}=\frac1{|\Omega|}\int_{\Omega}u(x,0)dx $
and $\overline{w_0}=\frac1{|\Omega|}\int_{\Omega}w(x,0)dx $  in $L^\infty(\Omega)$.
\end{abstract}

{\small Keywords}: Signal-suppressed motility,  Keller--Segel system, asymptotic behavior.

{\small MSC}: 35B40; 35K57; 35Q92; 92C17.\\
\vskip4mm

\section{Introduction}
Chemotaxis, a kind of oriented motion of cells and organisms in response to certain  chemicals in the environment, plays an outstanding role in the life of many cells and microorganisms, such as the transport of embryonic cells to developing tissues and immune cells to infection sites (\cite{Is, Murray}). The celebrated mathematical model
describing chemotactic migration processes at population level is Keller--Segel-type  system of the form
\begin{equation}\label{1.1}
\left\{
\begin{array}{ll}
u_{t}=\nabla\cdot(\gamma(u,v)\nabla u-u\phi(u,v)\nabla v),\quad &x\in\Omega,t>0,\\
v_{t}=d\Delta v-v+u,&x\in\Omega,t>0,
\end{array}
\right.	
\end{equation}
in a bounded domain $\Omega\subset \mathbb{R}^n$ where $u = u(x, t)$ denotes  the population density  and $v=v(x,t)$ is the concentration of  chemical substance secreted by the population itself (\cite{Keller70}).
The most striking features of \eqref{1.1}
is the ability of the constitutive ingredient  cross-diffusion thereof to describe the collective behavior of  cell populations
mediated by a chemoattractant. Indeed, a rich literature has revealed that
the Neumann initial-boundary value problem for the classical Keller--Segel system
\begin{equation}\label{1.2}
\left\{
\begin{array}{ll}
u_{t}=\triangle  u- \nabla\cdot (u\nabla v),\quad &x\in\Omega,t>0,\\
v_{t}=d\Delta v-v+u,&x\in\Omega,t>0
\end{array}
\right.	
\end{equation}
 possesses  solutions blowing up in finite time with respect to the spatial $L^\infty$ norm of $u$
in two- and even higher-dimensional frameworks  under
some condition on the mass and the moment of the initial data (\cite{Hv, Herrero,JMPA}, see also the surveys \cite{Bellomo}).
Apart from that, when $\phi$ and $\gamma$ in  \eqref{1.1} are only smooth positive functions of
$u$ on $[0, \infty)$, a considerable literature
underlines the crucial role of %
asymptotic beahvior of
the ratio $\frac {\gamma (u)}{\phi(u)} $ at large values of $u$
 with regard to the
occurrence of singularity phenomena (see recent progress in \cite{Ishida,WNon, WJDE2019}).

As a simplification  of  \eqref{1.1}, Keller--Segel system with density dependent motility
\begin{equation}\label{1.3}
\left\{
\begin{array}{ll}
u_{t}=\nabla\cdot(\gamma(v)\nabla u-u\phi(v)\nabla v),\quad &x\in\Omega,t>0,\\
v_{t}=d\Delta v-v+u,&x\in\Omega,t>0
\end{array}
\right.	
\end{equation}
 was proposed to describe the aggregation phase of Dictyostelium discoideum (Dd) cells in response to
the chemical signal cyclic adenosine monophosphate (cAMP) secreted by Dd cells in \cite{Keller70b}.
Here the signal-dependent diffusivity $\gamma(v)$ and chemotactic sensitivity function $\phi(v)$ are linked through
$$
\phi(v) = (\alpha-1)\gamma'(v),
$$
where  $\alpha\geq 0$ denotes the ratio of effective body length (i.e. distance between the signal-receptors) to the
walk length (see [19] for details). Notice that when  $\alpha=0$, there is only
one receptor in a cell and hence chemotaxis is driven  by the undirect effect of chemicals
in  the absence of the chemical gradient sensing. In this case, \eqref{1.3} reads as
  \begin{equation}\label{1.4}
\left\{
\begin{array}{ll}
u_{t}=\triangle(\gamma(v)u),\quad &x\in\Omega,t>0,\\
v_{t}=d\Delta v-v+u,&x\in\Omega,t>0,
\end{array}
\right.	
\end{equation}
where  the considered diffusion process of the population is essentially Brownian,
 assumption $\gamma'(v)<0$ accounts for the repressive effect
 of the chemical concentration on the  population motility (\cite{FTLH}). In the context of acyl-homoserine lactone (AHL)
 density-dependent motility, the extended  model of  \eqref{1.4} 
\begin{equation}\label{1.5}
\left\{
\begin{array}{ll}
u_{t}=\Delta (u\gamma (v)) +\beta \displaystyle\frac {uw^2}{w^2+\lambda},&x\in \Omega,~t>0, \\
v_{t}=D\Delta v+u-v, &x\in\Omega,~t>0,\\
w_{t}=\Delta w-\displaystyle\frac {uw^2}{w^2+\lambda},&x\in\Omega,~t>0
\end{array}
\right. 	
\end{equation}
was proposed  in \cite{Liu} to advocate that spatio-temporal pattern of  Eeshcrichia
coli cells can be induced via  so-called
``self-trapping'' mechanisms, that is at low
AHL level, the bacteria undergo run-and-tumble random motion, while at high
AHL levels, the bacteria tumble incessantly and become immotile at the macroscale.

In comparison with  plenty of results on the  Keller--Segel system where the diffusion depends on the density of cells,
 the respective knowledge seems to be
 much less complete when the  cell dispersal explicitly depends on the chemical concentration via the motility function  $\gamma(v)$,
 which is due to considerable challenges  to the analysis  caused by the degeneracy of $\gamma(v)$ 
  as $v\rightarrow \infty$ from the mathematical point of view.
Indeed, to the authors's knowledge,  Yoon and Kim
  (\cite{YKim}) showed that in the case of $\gamma(v)=\frac{c_0}{v^k}$ for small $c_0$, problem \eqref{1.4}
 admits a global classical solutions in any dimensions.
 The smallness condition on $c_0$ is removed lately in \cite{AknYoon} for the parabolic-elliptic version of \eqref{1.4} with $0 <k < \frac n{(n-2)_+} $.
 Furthermore, for the full parabolic system \eqref{1.4} in the  three-dimensional setting, Tao and Winkler (\cite{TWM3AS}) showed the existence of certain globally weak solutions, which become eventually smooth and bounded for suitably small
initial data $u_0$ under the assumption
\begin{eqnarray*}\label{H}
 &&(H)~~\gamma(v)\in C^3([0,\infty)),~\textnormal{and there exist $\gamma_1,~\gamma_2,\eta>0$
  such that $0<\gamma_1\leq \gamma(v)\leq \gamma_2,$}\nonumber \\
&&\textnormal{$|\gamma^{\prime}(v)|<\eta$ for all $v\geq 0$.}	
 \end{eqnarray*}
  It should be remarked that based on
 the comparison method,  Fujie and Jiang (\cite{Fujiangcvpde}) obtained the
  uniform-in-time boundedness 
  to 
  \eqref{1.4} 
 in two-dimensional setting for the more general motility function $\gamma$, and in the three-dimensional case  under a stronger growth condition on $1/\gamma$ respectively. In addition, they investigated  the asymptotic behavior  to the parabolic-elliptic analogue of
  \eqref{1.4}   under the assumption $\displaystyle\max_{0\leq v< +\infty}\frac {|\gamma'(v)|^2}{\gamma(v)}<+\infty$ or
  $\gamma(v)=v^{-k}$ with $0<k<\frac{n}{(n-2)_+}$ in \cite{Fujiangjde,Jiang}.

On the considered time scales of cell migration, 
e.g. metastatic cells moving in semi-solid medium, often it is relevant to take into account
the growth of the population.
 A prototypical
choice to accomplish this is the addition of logistic growth terms $\kappa u-\mu u^2$ in the cell equation \cite{Murray}.
From the mathematical point of view, the dissipative action 
of logistic-like growth possibly  prevents the occurrence of singularity 
phenomena in various chemotaxis models.  For instance, for the chemotaxis-growth system (\cite{FTLH})
\begin{equation}\label{1.6}
\left\{
\begin{array}{ll}
u_{t}=\triangle(\gamma(v)u)+\mu u(1-u),\quad &x\in\Omega,t>0,\\
v_{t}=d\Delta v-v+u,&x\in\Omega,t>0,
\end{array}
\right.	
\end{equation}
it is shown in \cite{JKWSiam} that in two dimensional setting, the system
admits a unique global classical solution when the motility function
$ \gamma\in   C^3([0,\infty)), \gamma(v) > 0 $ and $\gamma'(v) < 0$  for all  $v\geq 0$, $\lim_{v\rightarrow \infty} \gamma(v)=0$ and
$\displaystyle\lim_{v\rightarrow \infty}\frac{\gamma ^{\prime}(v)}{\gamma(v)}$ exists, and even  the constant steady state $(1,1)$
is globally asymptotically stable if $\mu >\frac 1{16}\displaystyle\max_{0\leq v< +\infty}
\frac {|\gamma'(v)|^2}{\gamma(v)}$.  The global existence thereof in the higher dimensions has been proved for large $\mu > 0$ (\cite{WWang}),
while for small $\mu$, the respective  model can generate pattern formation (see \cite{MPWang}). The reader is referred to \cite{Lwang1,Lwang2}
 for the other studies on the related variants involving super-quadratic degradation terms.

In contexts of the  diffusion of  cells in a porous medium (see  the discussions in \cite{Calvez,Vazquez}), Winkler (\cite{WNon2})
 considered the cross--diffusion system
\begin{equation}\label{1.7}
\left\{
\begin{array}{ll}
u_{t}=\triangle(\gamma(v)u^m),\quad &x\in\Omega,t>0,\\
v_{t}=\Delta v-v+u,&x\in\Omega,t>0
\end{array}
\right.	
\end{equation}
in smoothly bounded convex domains $\Omega\subset \mathbb{R}^n$, where $ m> 1$, $ \gamma$ generalizes
the prototype $\gamma(v) =a+b(v+d)^{-\alpha} $ with $a\geq 0,b>0,d\geq 0$ and $\alpha\geq 0$, and proved the boundedness of global weak solutions
  to the associated initial-boundary value problem under some constriction on $m$ and $\alpha$, which particularly indicates that
  increasing $m$ in the cell equation goes along with a certain regularizing effect
despite both the diffusion and the cross-diffusion
mechanisms implicitly contained in  \eqref{1.7} are simultaneously enhanced.

In recent paper \cite{JSW}, Jin et al. considered the three-component system
\begin{equation}\label{1.8}
\left\{
\begin{array}{ll}
u_{t}=\Delta (\gamma (v)u) +\beta uf(w)-\theta u,&x\in \Omega,~t>0,\\
v_{t}=D\Delta v+u-v,&x\in\Omega,~t>0,\\
w_{t}=\Delta w-uf(w),&x\in\Omega,~t>0
\end{array}
\right. 	
\end{equation}
 in a bounded domain $\Omega\subset \mathbb{R}^2$, where $\beta$,~$D>0$ and $\theta\geq 0$, the random motility function $\gamma (v)$ satisfies (H) and  functional
 response function $f(w)$ fulfills the assumption
 \begin{equation}\label{1.9}
	f(w)\in C^1([0,\infty)),~f(0)=0,  ~ f(w)>0~\textnormal{in}~(0,\infty)~ \textnormal{and}~f'(w)>0~\textnormal{on}~[0,\infty).
\end{equation}
Based on the method of energy estimates and Moser iteration, they
showed the uniformly boundedness 
to  initial--boundary value problem of \eqref{1.8}, inter alia  the asymptotic behavior thereof when parameter $D$ is suitably large.
In synopsis of the above results, one natural problem seems to consist
in determining to which extent nonlinear diffusion  of porous medium  type  may influence the solution behavior in chemotaxis systems involving density-suppressed motility. Accordingly, the purpose of the present work is to address this question
in the context of the particular choice $\gamma(v)=v^{-\alpha}$ with $\alpha>0$ instead of assumption (H) in \eqref{1.8}.
Specifically, we consider  the asymptotic behavior to  the initial--boundary value problem
\begin{equation}\label{1.10}
\left\{
\begin{array}{ll}
u_{t}=\Delta (\displaystyle\frac{u^m}{v^\alpha}) +\beta uf(w),&x\in \Omega,~t>0, \\
v_{t}=D\Delta v+u-v,&x\in\Omega,~t>0,\\
w_{t}=\Delta w-uf(w),&x\in\Omega,~t>0
\end{array}
\right. 	
\end{equation}
along with the initial
conditions
\begin{equation}\label{1.11}
u(x,0)=u_0,  v(x,0)=v_0~\and~ w(x,0)=w_0, ~~x\in \Omega
\end{equation}
and under the boundary conditions
\begin{equation}\label{1.12}
\displaystyle\frac{\partial u}{\partial \nu}=\displaystyle\frac{\partial v}{\partial \nu}
=\displaystyle\frac{\partial w}{\partial \nu}=0 ~~\hbox{on}~ \partial\Omega
\end{equation}
in a bounded convex domain $\Omega \subset \mathbb{R}^2$ with smooth boundary.

In what follows,  for simplicity we shall drop
the differential element in the integrals without confusion, namely abbreviating $\int_\Omega f(x) dx$ as $\int_\Omega f $
and  $\int^t_0\int_\Omega f(x,\tau) dxd\tau $ as $\int^t_0\int_\Omega f(\cdot,) d\tau$.  With the assumption \eqref{1.9},
our main result asserts that the weak solutions approach the relevant homogeneous steady state  in the large time limit
if $D$ is suitably large, which is stated as follows.
\begin{theorem}\label{Th1.1}
Let	$\Omega\subset \mathbb{R}^2$ be a bounded convex domain with smooth boundary, and suppose that $m>1,\alpha>0,\beta>0$ and $f$ satisfies \eqref{1.9}.
Assume that initial data $(u_0,v_0,w_0)\in (W^{1,\infty}(\Omega))^3 $ with $ u_0\gneqq  0 ,w_0\gneqq  0$ and $v_0>0$ in $\overline{\Omega}$. Then problem \eqref{1.10}--\eqref{1.12} admits at least one global  weak solution $(u,v,w )$ in the sense  of Definition 2.1 below. Moreover, there exists constant $D_0>0$ such that if $D>D_0$,
\begin{equation}\label{1.13}
\lim\limits_{t\rightarrow \infty}\|u(\cdot,t)-u_{\star}\|_{L^{\infty}(\Omega)}+\|v(\cdot,t)-u_{\star}\|_{L^{\infty}(\Omega)}+\|w(\cdot,t)\|_{L^{\infty}(\Omega)}=0	
\end{equation}
with $u_{\star}=\frac{1}{|\Omega|}\int_{\Omega}u_0+\frac{\beta}{|\Omega|}\int_{\Omega}w_0$.
\end{theorem}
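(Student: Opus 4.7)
The plan is to prove Theorem~\ref{Th1.1} in three layers: first construct global classical solutions to a non-degenerate approximation, then extract a weak limit by compactness, and finally identify the long-time behavior by a dissipation-type functional whose coercivity is activated by the largeness of $D$.

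\textbf{Approximation and passage to the limit.} I would regularize the singular factor $v^{-\alpha}$ by $(v+\varepsilon)^{-\alpha}$, so that standard parabolic theory yields global nonnegative classical solutions $(u_\varepsilon,v_\varepsilon,w_\varepsilon)$. The maximum principle together with $f(0)=0$ gives $0\le w_\varepsilon\le\|w_0\|_{L^\infty}$; adding the $u$- and $w$-equations yields the conservation identity $\int_\Omega(u_\varepsilon+\beta w_\varepsilon)\equiv\int_\Omega(u_0+\beta w_0)$, hence a uniform $L^1$ bound on $u_\varepsilon$, which via the $v$-equation transfers to a uniform $L^1$ bound on $v_\varepsilon$. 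A heat-semigroup argument exploiting $v_0>0$ in $\overline{\Omega}$ and the strict positivity of $\int_\Omega u_\varepsilon$ should furnish a uniform pointwise lower bound $v_\varepsilon\ge c_\star>0$; this is the essential ingredient that keeps $u_\varepsilon^m/(v_\varepsilon+\varepsilon)^\alpha$ under control. The core a priori step is of duality type: letting $\psi_\varepsilon$ be the zero-mean Neumann solution of $-\Delta\psi_\varepsilon = u_\varepsilon-\overline{u_\varepsilon}$, testing the $u$-equation against $\psi_\varepsilon$ and combining with a test against $u_\varepsilon^{p-1}$ for a suitable $p>1$ should give uniform $L^p$ bounds on $u_\varepsilon$. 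An Aubin--Lions compactness argument then delivers a subsequence converging a.e.\ to a triple $(u,v,w)$ that satisfies the weak formulation of Definition~2.1.

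\textbf{Asymptotic behavior.} With mass conservation fixing $u_\star$, integrating the $w$-equation gives $\int_0^\infty\!\int_\Omega uf(w)\,d\tau<\infty$; together with uniform continuity in time of $\int_\Omega w(\cdot,t)$ this forces $\int_\Omega w(\cdot,t)\to 0$, and parabolic smoothing of the $w$-equation lifts this to $\|w(\cdot,t)\|_{L^\infty}\to 0$. For the $(u,v)$ component, I would construct a Lyapunov-type functional of the form
\begin{equation*}
\mathcal{F}(t)=\int_\Omega|\nabla\psi|^2 + \kappa\int_\Omega(v-u_\star)^2 + \lambda\int_\Omega w,
\end{equation*}
where $\psi$ is the zero-mean Neumann potential of $u-\overline{u}$ and $\kappa,\lambda>0$ are chosen later. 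Differentiating and integrating by parts, the $D$-diffusion in $v$ produces a coercive sink $D\int_\Omega|\nabla(v-u_\star)|^2$ which, through the Poincar\'e inequality, dominates the cross terms involving $(u-\overline{u})$ and $\beta uf(w)$ provided $D>D_0$, with $D_0$ depending on $\alpha,\beta,m,\|w_0\|_{L^\infty}$ and $\Omega$. Integrating in time yields an $L^2$-decay of $u-u_\star$ and $v-u_\star$, and a Moser-type iteration tailored to the porous-medium operator $\Delta(u^m/v^\alpha)$ — using the lower bound $v\ge c_\star$ — upgrades the convergence to the $L^\infty$ statement \eqref{1.13}.

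\textbf{Main obstacle.} The hardest point I anticipate is running the duality estimate in the presence of both the porous-medium nonlinearity $u^m$ and the (possibly) singular factor $v^{-\alpha}$, while keeping the lower bound on $v_\varepsilon$ uniform in $\varepsilon$ and $t$. Because $m>1$ makes direct $L^p$-testing degenerate where $u$ is small and $v^{-\alpha}$ amplifies the coupling where $v$ is small, the two tests have to be carefully balanced; the convexity of $\Omega$ is needed precisely to dispose of boundary terms such as $\int_{\partial\Omega}\frac{\partial|\nabla v|^2}{\partial\nu}$ in these manipulations, and the threshold $D_0$ emerges from the requirement that the resulting dissipation close against the destabilizing cross terms in $\mathcal{F}'(t)$.
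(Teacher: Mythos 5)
Your global architecture (regularize, duality estimate, compactness, semigroup lower bound for $v$, largeness of $D$ to activate dissipation) matches the paper's, but the asymptotic part as written has genuine gaps. First, the decay of $w$: finiteness of $\int_0^\infty\int_\Omega uf(w)$ together with continuity (even monotonicity) of $\int_\Omega w$ does \emph{not} force $\int_\Omega w\to 0$ — $\int_\Omega w$ is nonincreasing and converges, but nothing in that argument excludes a positive limit, e.g.\ if $u$ and $w$ were to decorrelate spatially. The paper closes this by using $\int_\Omega u(\cdot,t)\ge\int_\Omega u_0>0$ together with the space--time bound $\int_0^\infty\int_\Omega|\nabla w|^2<\infty$ and the Poincar\'e inequality to pass from $\int_\Omega uf(w)$ to $\int_\Omega f(w)$, obtaining $\|f(w(\cdot,t))\|_{L^1}\to0$ along a sequence, then upgrading via Gagliardo--Nirenberg and the monotonicity of $\|w(\cdot,t)\|_{L^\infty}$ (Lemmas 5.2 and 5.4). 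Second, your functional $\mathcal{F}$ cannot close as a Lyapunov functional: the component $\lambda\int_\Omega w$ only dissipates $\lambda\int_\Omega uf(w)$, which is not coercive in $w$ (there is no pointwise lower bound on $u$), and the cross terms involve $u-u_\star$ whose mean part equals $\beta\overline{w}(t)$, with no known rate making $\overline{w}^2$ time-integrable. Consequently ``integrating in time'' gives at best space--time integrability of dissipation terms, not the claimed $L^2$-\emph{decay} of $u-u_\star$ and $v-u_\star$; converting integrability into convergence along the whole net $t\to\infty$ requires a quantitative control of the time derivative, which is missing from your outline. The paper's route is precisely this: it studies $\int_\Omega u_\varepsilon^2+\eta\int_\Omega|\nabla v_\varepsilon|^2$ with $\eta$ tuned to $D>D_0$, absorbs the forcing $\int_\Omega u_\varepsilon f(w_\varepsilon)$ by its time-integrability to get $\int_3^\infty\int_\Omega|\nabla u^{\frac{m+1}{2}}|^2+\int_3^\infty\int_\Omega|\nabla v|^2<\infty$ (Lemma 5.3), then proves $\int_3^\infty\|u_t\|_{(W_0^{1,2}(\Omega))^*}^2<\infty$ (Lemma 5.5) and combines this with uniform H\"older bounds and a contradiction/weak-$*$ argument to get $\|u(\cdot,t)-u_\star\|_{L^\infty}\to0$ (Lemma 5.6), and uses Poincar\'e plus the decay of $\|\nabla v\|_{L^2}$ and H\"older continuity for $v$ (Lemma 5.7). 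Your final ``Moser iteration upgrades to $L^\infty$'' step is likewise too vague: the upgrade comes from interpolating time-integrated decay against uniform H\"older/regularity bounds, not from iteration alone.

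Two smaller points. Regularizing only $v^{-\alpha}$ by $(v+\varepsilon)^{-\alpha}$ does not produce a non-degenerate problem: the porous-medium degeneracy at $u=0$ remains, so ``standard parabolic theory yields global classical solutions'' fails as stated; the paper instead keeps $v_\varepsilon^{-\alpha}$ (harmless thanks to the positivity of $v$) and regularizes the $u$-diffusion by adding $\varepsilon\Delta(u_\varepsilon+1)^M$ and replacing $u^m$ by $u_\varepsilon(u_\varepsilon+\varepsilon)^{m-1}$, following \cite{WNon2}. Also, convexity of $\Omega$ is used for the pointwise lower bound of the Neumann heat semigroup behind the bound $v_\varepsilon\ge\delta$, not to discard boundary terms like $\int_{\partial\Omega}\partial_\nu|\nabla v|^2$, which do not arise in the estimates actually needed here.
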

As the first step to prove the above claim,  
in the next section it is shown  that problem \eqref{1.10}--\eqref{1.12} with $m>1$ and $\alpha>0$
possesses  a globally defined weak solution in two-dimensional setting  by adjusting the argument in \cite{WNon}.
With respect to the convergence properties asserted in \eqref{1.13}, our analysis is essentially different from that of \cite{JSW}. In fact, thanks to 
$ \gamma_1\leq\gamma(v)\leq \gamma_2$ for all $v\geq 0$ in (H), authors of \cite{JSW} derived the estimate of $\|u(\cdot,t)\|_{L^2(\Omega)}$, which is the start point of
a priori estimate of $\|u(\cdot,t)\|_{L^\infty(\Omega)}$. In particular, the assumption $ \gamma_1\leq\gamma(v)$  plays an essential role in constructing
energy function
$\mathcal{F}(u,v):=\|u(\cdot,t)-u_*\|_{L^2(\Omega)}+ \|v(\cdot,t)-u_*\|_{L^2(\Omega)}$, which leads to the convergence of $(u,v)$ if $D$ is suitable large (see the proofs of
Lemma 4.8 and Lemma  4.10 in \cite{JSW} for the details).   Our asymptotic analysis 
consists at its core in an analysis
of the functional
 $$
 \int_{\Omega}u^2+\eta\int_{\Omega}|\nabla v|^2
$$ for solutions of certain regularized versions of \eqref{1.10},
provided that in dependence on the model
parameter $D$ the positive constant $\eta$ is suitably chosen when $D$ is suitable large.
This yields
 the finiteness of $
\int^{\infty}_{3}\int_{\Omega}|\nabla u^{\frac{m+1}{2}}|^2
$
and
$
\int^{\infty}_{3}\int_{\Omega}|\nabla v
|^2
$
 (see Lemma \ref{lemma5.3}), and then entails
that as a consequence of these integral inequalities, all our solutions asymptotically
become homogeneous in space and hence satisfy \eqref{1.13} (Lemmas \ref{lemma5.4}--5.6).

\begin{remark}
(1) Note that as an apparently inherent drawback, assumption (H) in \cite{JSW} excludes $\gamma(v)$ decay functions  such as  $v^{-\alpha}$.  Indeed, despite $v$ is bounded below by $\delta$ with the help of
  Lemma \ref{Lemma2.3} and thereby the upper bound for  $\gamma(v)$  can be removed,  an lower bound for  $\gamma(v)$ in (H) is essentially required therein.

(2) Due to the  results on existence of global solutions in \cite{WNon2}, the asymptotic behavior of solutions herein seems to be achieved for the higher-dimensional version of \eqref{1.10}  at the cost of additional constraint on $m$ and $\alpha$.
\end{remark}

\section{Preliminaries}
Throughout this paper, we shall pursue weak solutions to problem \eqref{1.10}--\eqref{1.12} specified as follows.
\begin{definition}\label{Definition 2.1}
Let $m>1,\alpha>0,\beta>0$ and $f$ satisfies \eqref{1.9}. Then  
a triple $(u,v,w) $ of nonnegative functions 
$$
\left\{
\begin{array}{lll}
u\in L^1_{loc}(\overline{\Omega}\times[0,\infty))\\
v\in L^1_{loc}([0,\infty);W^{1,1}(\Omega))\\
w\in L^1_{loc}([0,\infty);W^{1,1}(\Omega))
\end{array}
\right.
$$
will be called a global weak solution of problem \eqref{1.10}--\eqref{1.12}  if 
\begin{equation}\label{2.1}
u^m
/v^\alpha\in L^1_{loc}(\overline{\Omega}\times[0,\infty))
\end{equation}
and
\begin{equation}\label{2.2}
-\int^{\infty}_0\int_{\Omega}u\varphi_t-\int_{\Omega}u_0\varphi(\cdot,0)=\int^{\infty}_0\int_{\Omega}\frac{u^m}{v^\alpha}\Delta\varphi+
\beta\int^{\infty}_0\int_{\Omega}uf(w)\varphi	
\end{equation}
for all $\varphi\in C^{\infty}_0(\overline{\Omega}\times[0,\infty))$ such that $\frac{\partial\varphi}{\partial \nu}|_{\partial \Omega}=0$ and
\begin{equation}\label{2.3a}
-\int^{\infty}_0\int_{\Omega}v\varphi_t-\int_{\Omega}v_0\varphi(\cdot,0)=-D\int^{\infty}_0\int_{\Omega}\nabla v\cdot\nabla \varphi-\int^{\infty}_0\int_{\Omega}v\varphi+\int^{\infty}_0\int_{\Omega}u\varphi	
\end{equation}
for all $\varphi\in C^{\infty}_0(\overline{\Omega}\times[0,\infty))$ as well as
\begin{equation}\label{2.4}
\int^{\infty}_0\int_{\Omega}	 w\varphi_t-\int_{\Omega}w_0\varphi(\cdot,0)=-\int^{\infty}_0\int_{\Omega}\nabla w\cdot\nabla \varphi-\int^{\infty}_0\int_{\Omega}uf(w)\varphi
\end{equation}
for all $\varphi\in C^{\infty}_0(\overline{\Omega}\times[0,\infty))$.
\end{definition}

Proceeding  in a similar manner as done  in \cite{WNon2}, a global weak solution in the above sense can be obtained as the limit of a
sequence of solutions $(u_{\varepsilon},v_{\varepsilon},w_{\varepsilon}) $ of the regularized problems
\begin{equation}\label{2.5}
\left\{
\begin{array}{lll}
u_{\varepsilon t}=\varepsilon\Delta (u_{\varepsilon}+1)^{M}+\Delta \left(u_{\varepsilon}(u_{\varepsilon}+\varepsilon )^{m-1}v_{\varepsilon}^{-\alpha}\right) +\beta u_{\varepsilon}f(w_{\varepsilon}),&&x\in \Omega,~t>0, \\
v_{\varepsilon t}=D\Delta v_{\varepsilon}+u_{\varepsilon}-v_{\varepsilon},&&x\in\Omega,~t>0,\\
w_{\varepsilon t}=\Delta w_{\varepsilon}-u_{\varepsilon}f(w_{\varepsilon}),&&x\in\Omega,~t>0,	\\
\frac{\partial u_{\varepsilon}}{\partial \nu}=\frac{\partial v_{\varepsilon}}{\partial \nu}=\frac{\partial w_{\varepsilon}}{\partial \nu}=0,&&x\in\partial \Omega,~t>0,\\
u_{\varepsilon}(x,0)=u_0,~v_{\varepsilon}(x,0)=v_0,~w_{\varepsilon}(x,0)=w_0,&&x\in \Omega
\end{array}
\right. 	
\end{equation}
with $M>m,\varepsilon\in(0,1)$.  
\begin{lemma}\label{Lemma2.1}
Let $m>1,\alpha>0,\beta>0$ and $f$ satisfies \eqref{1.9}.  Then there exist $(\varepsilon _{j})_{j\in\mathbb{N}}\subset(0,1)$ as well as nonnegative functions
\begin{equation}\label{2.6}
\left\{
\begin{array}{lll}
u\in L^{\infty}(\overline{\Omega}\times[0,\infty))	\\
v\in C^0(\overline{\Omega}\times[0,\infty))\bigcap L^2_{loc}([0,\infty);W^{1,2}(\Omega))\\
w\in C^0(\overline{\Omega}\times[0,\infty))\bigcap L^2_{loc}([0,\infty);W^{1,2}(\Omega))
\end{array}
\right.	
\end{equation}
such that $\varepsilon_{j}\searrow 0$ as $j\rightarrow \infty$ and as $\varepsilon_{j}\searrow 0$, we have
\begin{eqnarray}\label{2.7}
&&u_{\varepsilon}\rightarrow u\quad \textnormal{\emph{a.e. in $\Omega\times(0,\infty)$}},\\
&&u_{\varepsilon}\rightarrow u\quad \textnormal{\emph{in $\bigcap_{p\geq 1}L^p_{loc}(\overline{\Omega}\times[0,\infty))$ }},\\
&&v_{\varepsilon}\rightarrow v\quad\textnormal{\emph{in $C_{loc}^0(\overline{\Omega}\times[0,\infty))$}},\\
&&w_{\varepsilon}\rightarrow w\quad\textnormal{\emph{in $C_{loc}^0(\overline{\Omega}\times[0,\infty))$}},\\
&&\nabla v_{\varepsilon}\rightharpoonup \nabla v \quad\textnormal{\emph{in $L^2_{loc}(\overline{\Omega}\times[0,\infty))$}},\\
&&\nabla w_{\varepsilon}\rightharpoonup \nabla w \quad\textnormal{\emph{in $L^2_{loc}(\overline{\Omega}\times[0,\infty))$}}	.
\end{eqnarray}
Moreover,~$v>0$ in $\overline{\Omega}\times(0,\infty)$ and $(u,v,w)$ forms a global weak solution of \eqref{1.10}--\eqref{1.12}
 in the sense of Definition \ref{Definition 2.1}.
\end{lemma}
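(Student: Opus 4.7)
The plan is to carry out the standard approximation-and-compactness scheme, following the framework of \cite{WNon2} adapted to the present two-dimensional reactive setting. For fixed $\varepsilon\in(0,1)$ and $M>m$, the uniformly nondegenerate leading-order diffusion $\varepsilon\Delta(u_\varepsilon+1)^{M}$ allows one to invoke standard parabolic fixed-point theory (e.g.\ Amann's quasilinear theory) to produce a unique classical local solution of \eqref{2.5} on some maximal interval $[0,T_{\max,\varepsilon})$. Three basic estimates follow at once: the maximum principle applied to the $w_\varepsilon$-equation yields $0\le w_\varepsilon\le \|w_0\|_{L^\infty(\Omega)}$; summing the spatial integrals of the $u_\varepsilon$- and $w_\varepsilon$-equations gives the conservation law $\int_\Omega u_\varepsilon+\beta\int_\Omega w_\varepsilon\equiv\int_\Omega u_0+\beta\int_\Omega w_0$; and comparison applied to the linear $v_\varepsilon$-equation with nonnegative source $u_\varepsilon$ yields the strictly positive pointwise lower bound $v_\varepsilon(x,t)\ge e^{-t}\min_{\overline\Omega}v_0$, and hence a pointwise upper bound for $v_\varepsilon^{-\alpha}$ on any compact time interval.

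The central technical task is to derive uniform-in-$\varepsilon$ estimates on $u_\varepsilon$. Testing the $u_\varepsilon$-equation against $(u_\varepsilon+1)^{p-1}$ and integrating by parts produces a dissipative contribution of the form $c_p\int_\Omega v_\varepsilon^{-\alpha}(u_\varepsilon+\varepsilon)^{m-1}(u_\varepsilon+1)^{p-2}|\nabla u_\varepsilon|^{2}$, while the source $\beta u_\varepsilon f(w_\varepsilon)$ is controlled linearly in $u_\varepsilon$ because $w_\varepsilon$ is uniformly bounded. Using the pointwise positivity of $v_\varepsilon$ to bound $v_\varepsilon^{-\alpha}$ from below and the two-dimensional Gagliardo--Nirenberg inequality together with the $L^1$-bound on $u_\varepsilon$ to absorb the growth contributions, one obtains $\sup_{t\in(0,T)}\|u_\varepsilon(\cdot,t)\|_{L^{p}(\Omega)}\le C(p,T)$ for every $p\ge 1$, and a Moser-type iteration then delivers $\|u_\varepsilon\|_{L^\infty(\Omega\times(0,T))}\le C(T)$ independently of $\varepsilon$. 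Feeding this back into standard parabolic regularity applied to the $v_\varepsilon$- and $w_\varepsilon$-equations yields uniform $L^{2}_{loc}([0,\infty);W^{1,2}(\Omega))$ bounds for $\nabla v_\varepsilon,\nabla w_\varepsilon$ and H\"older equicontinuity suitable for uniform convergence on compact sets.

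Equipped with these bounds, the Aubin--Lions lemma (with $\partial_t u_\varepsilon$ controlled in a negative Sobolev space through the equation itself) yields, along a subsequence $\varepsilon_j\searrow 0$, the strong convergence $u_{\varepsilon_j}\to u$ in $L^{p}_{loc}(\overline\Omega\times[0,\infty))$ for every $p\ge 1$ and a.e.\ in $\Omega\times(0,\infty)$, whereas the Arzel\`a--Ascoli theorem delivers $v_{\varepsilon_j}\to v$ and $w_{\varepsilon_j}\to w$ in $C^{0}_{loc}(\overline\Omega\times[0,\infty))$, together with weak $L^{2}_{loc}$ convergence of the gradients; the uniform pointwise lower bound then transfers to $v>0$ on $\overline\Omega\times(0,\infty)$. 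Passing to the limit in the weak formulations \eqref{2.2}--\eqref{2.4} is straightforward in the linear and source terms; the only delicate identification is the convergence $u_{\varepsilon_j}(u_{\varepsilon_j}+\varepsilon_j)^{m-1}v_{\varepsilon_j}^{-\alpha}\to u^{m}v^{-\alpha}$ in $L^{1}_{loc}$, which I would obtain via Vitali's theorem from the a.e.\ convergence, the uniform $L^{\infty}$ bound on $u_{\varepsilon_j}$, and the uniform positive lower bound on $v_{\varepsilon_j}$.

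The principal obstacle is the uniform-in-$\varepsilon$ $L^{\infty}$ bound on $u_\varepsilon$: since the authors' hypotheses exclude the bounded-motility assumption (H) employed in \cite{JSW} and allow $\gamma(v)=v^{-\alpha}$ to degenerate, one must extract the dissipation from the porous-medium structure $m>1$ together with the pointwise lower bound on $v_\varepsilon$, while ensuring the auxiliary $\varepsilon$-regularization contributes no residual terms in the limit. Here the convexity of $\Omega$ enters structurally through the boundary sign condition $\partial_\nu|\nabla v_\varepsilon|^{2}\le 0$ that is needed when testing the $v_\varepsilon$-equation against quantities appearing in the $u_\varepsilon$ energy identity, which is precisely the geometric feature that makes the two-dimensional $L^{p}$-iteration of \cite{WNon2} applicable in the present degenerate setting.
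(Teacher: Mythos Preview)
Your overall scheme---regularize via \eqref{2.5}, derive $\varepsilon$-uniform bounds, extract a convergent subsequence, and pass to the limit in the weak formulation---is exactly the route the paper (via \cite{WNon2}) follows, and the compactness and limit-identification steps you sketch are correct. The genuine gap is in the central technical step, the $\varepsilon$-uniform $L^\infty$ bound on $u_\varepsilon$.

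Testing the $u_\varepsilon$-equation against $(u_\varepsilon+1)^{p-1}$ does not close by itself. After Young's inequality the cross-diffusion term leaves a remainder of the form
\[
\int_\Omega (u_\varepsilon+1)^{m+p-1}\,v_\varepsilon^{-\alpha-2}\,|\nabla v_\varepsilon|^{2},
\]
and the dissipation you obtain carries the weight $v_\varepsilon^{-\alpha}$. Two circularities arise: (i) bounding $v_\varepsilon^{-\alpha}$ \emph{from below} requires an \emph{upper} bound on $v_\varepsilon$ (the pointwise positivity you invoke gives only the opposite inequality), and such an upper bound is not available from the $L^1$ mass law alone; (ii) absorbing the remainder by Gagliardo--Nirenberg requires $\|\nabla v_\varepsilon\|_{L^q(\Omega)}$ for some $q>2$, which in turn needs prior space-time $L^p$ information on $u_\varepsilon$. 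In both \cite{WNon2} and the present paper (Lemmas~\ref{lemma3.1}--\ref{lemma3.3}) this circle is broken \emph{before} any $L^p$-testing by a duality argument: one tests against $A^{-1}(u_\varepsilon+1)$ with $A=-\Delta+1$ to obtain the space-time bound $\int_t^{t+1}\!\int_\Omega u_\varepsilon^{m+1}v_\varepsilon^{-\alpha}\le C$, which then yields $\nabla v_\varepsilon\in L^q$ for some $q>2$ (Lemma~\ref{lemma4.1}) and only thereafter makes the $L^p$-iteration in your sketch legitimate (Lemma~\ref{Lemma4.3}). Your proposal omits this duality step, and without it the argument does not close.

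A secondary point: the role you assign to the convexity of $\Omega$ is not the one actually used here. Convexity enters through the pointwise lower bound for the Neumann heat semigroup (Lemma~\ref{Lemma2.3}), which guarantees $v_\varepsilon\ge\delta>0$ uniformly; the boundary inequality $\partial_\nu|\nabla v_\varepsilon|^2\le 0$ is not invoked in this paper's estimates.
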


The following basic properties of the spatial $L^1$ norms of $(u_\varepsilon,v_\varepsilon,w_\varepsilon)$ as well as the $L^\infty$ norm of $w_\varepsilon$ are easily verified.

\begin{lemma}\label{Lemma2.2} Let $(u_{\varepsilon},v_{\varepsilon},w_{\varepsilon})$ be the classical solution of \eqref{2.5} in $\Omega\times(0,\infty)$. Then we have 	
\begin{equation}\label{2.13}
\|u_{\varepsilon}(\cdot,t)\|_{L^1(\Omega)}+\beta\|w_{\varepsilon}(\cdot,t)\|_{L^1(\Omega)}=\|u_0\|_{L^1(\Omega)}+\beta\|w_0\|_{L^1(\Omega)},	
\end{equation}
\begin{equation}\label{2.14}
\|u_{\varepsilon}(\cdot,t)\|_{L^1(\Omega)}
\geq \|u_0\|_{L^1(\Omega)},
\end{equation}
\begin{equation}\label{2.15}
\int_{\Omega}v_{\varepsilon}(\cdot,t)\leq \int_{\Omega} v_0+\int_{\Omega}u_0+\beta\int_{\Omega}w_0	 
\end{equation}
as well as
\begin{equation}\label{2.16}
t\mapsto \|w_{\varepsilon}(\cdot,t)\|_{L^{\infty}(\Omega)} ~\textnormal{\emph{is nonincrasing in}}~~[0,\infty).
\end{equation}
\end{lemma}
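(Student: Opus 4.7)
The four assertions of Lemma~\ref{Lemma2.2} are mass-type identities and estimates that should follow directly from integrating the equations in \eqref{2.5} over $\Omega$ and exploiting the homogeneous Neumann boundary conditions, together with a simple comparison principle for the last statement. The plan is therefore (i) to test the $u_\varepsilon$- and $w_\varepsilon$-equations against the constant $1$ and combine them, (ii) to test the $v_\varepsilon$-equation against $1$ and apply an elementary ODE comparison, and (iii) to compare $w_\varepsilon$ against a time-independent constant supersolution.

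For \eqref{2.13} and \eqref{2.14}, the divergence structure of the right-hand side of the first equation together with $\partial_\nu u_\varepsilon=\partial_\nu v_\varepsilon=0$ on $\partial\Omega$ forces $\int_\Omega \varepsilon\Delta(u_\varepsilon+1)^M$ and $\int_\Omega \Delta\!\left(u_\varepsilon(u_\varepsilon+\varepsilon)^{m-1}v_\varepsilon^{-\alpha}\right)$ to vanish, leaving $\frac{d}{dt}\int_\Omega u_\varepsilon=\beta\int_\Omega u_\varepsilon f(w_\varepsilon)$. Integrating the $w_\varepsilon$-equation similarly gives $\frac{d}{dt}\int_\Omega w_\varepsilon=-\int_\Omega u_\varepsilon f(w_\varepsilon)$, so that the $\beta$-weighted sum is conserved in $t$; this is \eqref{2.13}. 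Monotonicity of $t\mapsto\int_\Omega u_\varepsilon$ in \eqref{2.14} is then immediate from the nonnegativity of $u_\varepsilon$ and of $f(w_\varepsilon)$, the latter because $w_\varepsilon\geq 0$ (preserved under the $w_\varepsilon$-evolution by the classical maximum principle) and $f\geq 0$ on $[0,\infty)$ by \eqref{1.9}.

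For \eqref{2.15}, integrating the $v_\varepsilon$-equation yields $\frac{d}{dt}\int_\Omega v_\varepsilon+\int_\Omega v_\varepsilon=\int_\Omega u_\varepsilon$, and the bound $\int_\Omega u_\varepsilon\leq\int_\Omega u_0+\beta\int_\Omega w_0=:K$ already provided by \eqref{2.13} turns this into a scalar ODE inequality whose explicit solution satisfies
\begin{equation*}
\int_\Omega v_\varepsilon(\cdot,t)\leq e^{-t}\int_\Omega v_0+(1-e^{-t})K\leq \int_\Omega v_0+K,
\end{equation*}
which is exactly \eqref{2.15}.

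For \eqref{2.16}, I would argue by comparison with a constant supersolution. Fixing $t_0\geq 0$ and setting $M:=\|w_\varepsilon(\cdot,t_0)\|_{L^\infty(\Omega)}$, the constant $\overline{w}\equiv M$ satisfies $\overline{w}_t-\Delta\overline{w}+u_\varepsilon f(\overline{w})=u_\varepsilon f(M)\geq 0$ on $\Omega\times(t_0,\infty)$ with homogeneous Neumann data and $\overline{w}(\cdot,t_0)\geq w_\varepsilon(\cdot,t_0)$, so the standard parabolic comparison principle (applicable because $-u_\varepsilon f(\cdot)$ is locally Lipschitz in its argument at fixed $\varepsilon$, and $u_\varepsilon$ is a smooth, bounded coefficient) yields $w_\varepsilon\leq M$ on $[t_0,\infty)$. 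Since $t_0$ is arbitrary, \eqref{2.16} follows. The only mild point of care is to make sure the comparison is applied to a truly decoupled scalar problem for $w_\varepsilon$ with $u_\varepsilon$ treated as a prescribed nonnegative source coefficient; once this viewpoint is adopted, no real obstacle arises, so the proof is essentially bookkeeping around Neumann integrations and a one-line maximum principle argument.
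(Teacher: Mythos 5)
Your proposal is correct and follows essentially the same route as the paper: spatial integration of the $u_\varepsilon$- and $w_\varepsilon$-equations with the Neumann conditions to get the conserved $\beta$-weighted mass and the monotonicity of $\int_\Omega u_\varepsilon$, integration of the $v_\varepsilon$-equation plus the mass bound and an ODE comparison for \eqref{2.15}, and the maximum (comparison) principle for \eqref{2.16}. Your explicit constant-supersolution argument for \eqref{2.16} merely spells out what the paper invokes as "the maximum principle," so there is no substantive difference.
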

\begin{proof}
Multiplying  $w_{\varepsilon}$-equation by $\beta$ and adding the result  to $u_{\varepsilon}$-equation in \eqref{2.5}, we  get
\begin{equation}\label{2.17}
\beta \frac{d}{dt}\int_{\Omega}w_{\varepsilon}+\frac{d}{dt}\int_{\Omega}u_{\varepsilon }=0,
\end{equation}
which immediately yields \eqref{2.13}.  An integration of the first equation in \eqref{2.5} gives us
\begin{equation}\label{2.18}
\frac{d}{dt}\int_{\Omega}u_{\varepsilon}=\int_{\Omega}u_{\varepsilon}f(w_{\varepsilon})\geq 0	
\end{equation}
which readily entails \eqref{2.14}.
Upon the integration of the second equation in \eqref{2.5}, we can see that
$$
\frac{d}{dt}\int_{\Omega}v_{\varepsilon}+\int_{\Omega}v_{\varepsilon}\leq \int_{\Omega}u_{\varepsilon}
$$
which, along with \eqref{2.13} leads to \eqref{2.15}.
Due to the fact that $f$ and  $w_{\varepsilon}$ are nonnegative, the claim in \eqref{2.16} results upon an
application of the maximum principle to $w_{\varepsilon}$-equation in \eqref{2.5}.
\end{proof}


Let us  first derive lower bound for $v_\varepsilon$ which will
alleviate the difficulties caused by the singularity of  signal-dependent motility function $v^{-\alpha}$ near zero. Despite the quantitative lower estimate for solutions of the Neumann problem was established in the related literature (\cite{HPW,WNon2}),  we  present a proof of our results with some necessary details  to  make the lower bound accessible to the sequel analysis.

\begin{lemma}\label{Lemma2.3}
If $D\geq 1$, then there exist a constant $\delta>0$ independent of $D$ such that
\begin{equation}\label{2.10}
v_{\varepsilon}(x,t)>\delta	
\end{equation}
for all $x\in \Omega$ and $t>2$.
\end{lemma}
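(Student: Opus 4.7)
\emph{Plan of proof.} The plan is to exploit the variation-of-constants (Duhamel) representation of $v_\varepsilon$ associated with the Neumann heat semigroup $(e^{\tau D\Delta})_{\tau\geq 0}$ on $\Omega$, namely
$$
v_\varepsilon(\cdot,t)=e^{-t}e^{tD\Delta}v_0+\int_0^t e^{-(t-s)}\,e^{(t-s)D\Delta}u_\varepsilon(\cdot,s)\,ds.
$$
Representing each $e^{\tau D\Delta}\varphi$ through the Neumann heat kernel $K$ of the standard Laplacian via $[e^{\tau D\Delta}\varphi](x)=\int_\Omega K(x,y,D\tau)\varphi(y)\,dy$ converts the right-hand side into a manifestly nonnegative integral, every factor of which can be controlled by quantities depending on $\Omega$ only.

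The first step will be to discard the $v_0$-contribution (it is nonnegative since $v_0\geq 0$ and $K\geq 0$) and to restrict the Duhamel integral to the time window $s\in[t-2,t-1]$, which is admissible because $t>2$. On this window $t-s\in[1,2]$, so $e^{-(t-s)}\geq e^{-2}$ and, crucially, $D(t-s)\geq 1$ uniformly in $D\geq 1$. The second step will be to invoke a classical lower estimate for the Neumann heat kernel: there exists $c_0=c_0(\Omega)>0$ such that $K(x,y,s)\geq c_0$ for all $x,y\in\overline{\Omega}$ and all $s\geq 1$. This follows from the spectral decomposition of the Neumann Laplacian (the kernel converges exponentially to $1/|\Omega|$ as $s\to\infty$) combined with strict positivity and continuity of $K$ on $\overline{\Omega}\times\overline{\Omega}\times[1,S_0]$ for a suitably large $S_0$; bounds of exactly this flavour underlie the arguments in \cite{HPW,WNon2}. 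The third step will be to insert the global mass bound of Lemma \ref{Lemma2.2}, i.e.~$\int_\Omega u_\varepsilon(\cdot,s)\geq\|u_0\|_{L^1(\Omega)}>0$, which together with $|\{s\in[t-2,t-1]\}|=1$ yields
$$
v_\varepsilon(x,t)\geq c_0 e^{-2}\int_{t-2}^{t-1}\int_\Omega u_\varepsilon(y,s)\,dy\,ds\geq c_0 e^{-2}\|u_0\|_{L^1(\Omega)}=:\delta,
$$
a constant independent of $\varepsilon\in(0,1)$ and of $D\geq 1$.

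The principal obstacle is producing a genuinely $D$-independent $\delta$. This is handled exactly by the hypothesis $D\geq 1$: it forces the rescaled time $D(t-s)$ to lie in $[D,2D]\subset[1,\infty)$, a region on which the Neumann heat kernel enjoys a uniform positive lower bound irrespective of $D$. If one allowed $D$ to be arbitrarily small, the argument would require a time window of length $\gtrsim 1/D$ which could no longer be fitted inside $[0,t]$ with a $D$-free choice of $t$, so the restriction $D\geq 1$ is genuinely used. The remaining ingredients (nonnegativity of the kernel, monotonicity of the $L^1$-mass of $u_\varepsilon$) are routine and already available from Lemma \ref{Lemma2.2} and standard smoothing properties of $e^{\tau D\Delta}$.
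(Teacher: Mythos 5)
Your proposal is correct and follows essentially the same route as the paper: the variation-of-constants (Duhamel) representation for $v_\varepsilon$, the uniform pointwise lower bound for the Neumann heat semigroup/kernel at times $\geq 1$ (with $D\geq 1$ ensuring the effective heat time exceeds $1$), and the mass lower bound $\int_\Omega u_\varepsilon(\cdot,s)\geq \int_\Omega u_0$ from Lemma \ref{Lemma2.2}. The only cosmetic difference is that you stay in the original time variable and restrict the Duhamel integral to the unit window $[t-2,t-1]$, whereas the paper rescales time via $\tilde t = Dt$ and integrates over $[0,\tilde t-1]$, absorbing the factor $D^{-1}$ through the exponential integral.
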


\begin{proof}
  According to the pointwise lower bound estimate for the Neumann heat semigroup $(e^{t\Delta})_{t\geq 0}$ on the convex domain $\Omega$,
one can find $c_1(\Omega)>0$ such that
$$
e^{t\Delta}\varphi\geq c_1(\Omega)\int_{\Omega}\varphi \qquad \textnormal{for all $t\geq 1$ and each nonnegative $\varphi\in C^0(\overline{\Omega})$}
$$
(e.g.~\cite{Fujie,HPW}).

By the time rescaling $\tilde t=Dt$, we can see that $\tilde{v}(x,\tilde t):=v_{\varepsilon}(x,\frac{\tilde t} D)$ satisfies
\begin{equation}\label{2.20a}
\tilde{v}_{\tilde{t}}=\Delta \tilde{v}-D^{-1}\tilde{v}+D^{-1} u_{\varepsilon}(x,D^{-1}\tilde t).
\end{equation}Now applying the variation-of-constant formula to \eqref{2.20a}, we have
\begin{equation}\label{2.21a}
\begin{array}{ll}
\tilde v (\cdot,\tilde t)& =e^{\tilde t(\Delta -D^{-1})}v_0(\cdot)+
D^{-1}\displaystyle \int^{\tilde t}_{0}e^{(\tilde t-s)(\Delta-D^{-1})}u_{\varepsilon}(\cdot,D^{-1}s)ds\\
& \geq D^{-1}\displaystyle\int^{\tilde t-1}_{0}e^{(\tilde t-s)(\Delta-D^{-1})}u_{\varepsilon}(\cdot,D^{-1}s)ds\\
&\geq c_1(\Omega) D^{-1}(\displaystyle\int^{\tilde t-1}_0 e^{-D^{-1}(\tilde t-s)}ds) \inf\limits_{s\in(0,\infty)}\int_{\Omega}u_{\varepsilon}(\cdot,s)\\
&\geq c_1(\Omega) (e^{-D^{-1}}-e^{-D^{-1}\tilde t})\displaystyle\int_{\Omega}u_{0}
\end{array}
\end{equation}
for all $\tilde t>2$.
Hence due to $D\geq 1$, we  can see that for $x\in \Omega$ and $\tilde t\geq 2D$
\begin{equation*}
\tilde v(x,\tilde t)\geq  \frac{c_1(\Omega)}{2e}\int_{\Omega}u_{0},	
\end{equation*}
and readily establish \eqref{2.10} with   $\delta= \frac{c_1(\Omega)}{2e}\int_{\Omega}u_{0}	$.
\end{proof}

Through a straightforward semigroup argument, we formulate a favorable  dependence of $\|v_{\varepsilon}(\cdot,t)\|_{L^{p}(\Omega)}$
with respect to  parameter  $D$.

\begin{lemma}\label{lemma2.4}
For $p>1$, there exists $C(p)>0$ such that
\begin{equation}\label{2.13a}
\|v_{\varepsilon}(\cdot,t)\|_{L^{p}(\Omega)}\leq C(p)(1+D^{\frac 1p-1}	)
\end{equation}
for all $t>2$.
\end{lemma}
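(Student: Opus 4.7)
The plan is to read off the desired estimate directly from the variation-of-constants representation of the $v_\varepsilon$-equation, combined with standard Neumann heat semigroup bounds on $\Omega$, paying careful attention to the way $D$ enters through a time rescaling. The key identity is that if $(e^{\sigma\Delta})_{\sigma\ge 0}$ denotes the Neumann heat semigroup on $\Omega$, then $e^{\tau D\Delta}=e^{(D\tau)\Delta}$, so all of the $D$-dependence can be quarantined inside the effective time parameter of the semigroup.

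Concretely, I would first write
\begin{equation*}
v_\varepsilon(\cdot,t)=e^{-t}e^{(Dt)\Delta}v_0+\int_0^t e^{-(t-s)}e^{(D(t-s))\Delta}u_\varepsilon(\cdot,s)\,ds,
\end{equation*}
then invoke the classical $L^1$--$L^p$ smoothing estimate
\begin{equation*}
\|e^{\sigma\Delta}\varphi\|_{L^p(\Omega)}\le C_1\bigl(1+\sigma^{-(1-1/p)}\bigr)\|\varphi\|_{L^1(\Omega)},\qquad\sigma>0,
\end{equation*}
valid on the bounded two-dimensional convex domain $\Omega$ (with the constant $1$ inside the parenthesis absorbing the non-mean-zero contribution, since $\|\bar\varphi\|_{L^p}\le|\Omega|^{1/p-1}\|\varphi\|_{L^1}$ and $\varphi-\bar\varphi$ has mean zero), and finally combine this with the uniform mass control $\|u_\varepsilon(\cdot,s)\|_{L^1(\Omega)}\le\|u_0\|_{L^1(\Omega)}+\beta\|w_0\|_{L^1(\Omega)}$ provided by Lemma \ref{Lemma2.2} to obtain
\begin{equation*}
\|v_\varepsilon(\cdot,t)\|_{L^p(\Omega)}\le e^{-t}\|v_0\|_{L^p(\Omega)}+C_2\int_0^t e^{-(t-s)}\bigl(1+(D(t-s))^{-(1-1/p)}\bigr)\,ds.
\end{equation*}

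The third step would be to split the right-hand integral into its regular and singular parts. The regular part yields $C_2\int_0^\infty e^{-\tau}\,d\tau\le C_2$, a bounded constant independent of $D$. The singular part, after the substitution $\tau=t-s$, factors out the $D$-dependence cleanly as $D^{-(1-1/p)}=D^{1/p-1}$ multiplied by the finite integral $\int_0^\infty e^{-\tau}\tau^{-(1-1/p)}\,d\tau=\Gamma(1/p)$, convergent at $\tau=0$ precisely because $p>1$. Together with the trivial bound $e^{-t}\|v_0\|_{L^p(\Omega)}\le\|v_0\|_{L^\infty(\Omega)}|\Omega|^{1/p}$, these estimates combine to give $\|v_\varepsilon(\cdot,t)\|_{L^p(\Omega)}\le C(p)(1+D^{1/p-1})$ uniformly in $t>0$, which is actually stronger than the claimed bound for $t>2$.

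There is no serious obstacle in this argument; it is a routine Duhamel/semigroup computation. The only points requiring some attention are (i) isolating all of the $D$-dependence cleanly through the rescaling identity $e^{\tau D\Delta}=e^{(D\tau)\Delta}$, so that one recovers the exponent $1/p-1$ rather than some spurious exponent involving $D$, and (ii) verifying that the assumption $p>1$ is exactly what guarantees integrability of $\tau^{-(1-1/p)}$ at $\tau=0$ in the gamma-type integral; the estimate would fail at $p=1$, where both the semigroup kernel and the logarithmic divergence of this integral would obstruct the same bound.
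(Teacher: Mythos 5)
Your proof is correct and is essentially the paper's argument: a Duhamel representation for the $v_{\varepsilon}$-equation, the $L^1$--$L^p$ smoothing estimates of the Neumann heat semigroup, the mass bound $\|u_{\varepsilon}(\cdot,s)\|_{L^1(\Omega)}\leq\|u_0\|_{L^1(\Omega)}+\beta\|w_0\|_{L^1(\Omega)}$ from Lemma \ref{Lemma2.2}, and a Gamma-type integral that isolates the factor $D^{\frac1p-1}$. The only difference is bookkeeping: the paper first rescales time as $\tilde t=Dt$ (re-using the equation from Lemma \ref{Lemma2.3}) so the semigroup appears as $e^{\tilde t\Delta}$, whereas you stay in the original time variable and use $e^{\tau D\Delta}=e^{(D\tau)\Delta}$; the resulting computation is identical.
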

\begin{proof} Applying a Duhamel's formula to \eqref{2.20a} and employing well-known smoothing properties of the Neumann heat semigroup $(e^{t\Delta})_{t\geq 0}$ on $\Omega$ (see  Lemma 3  of \cite{Rothe} or Lemma 1.3   of \cite{WJDE} for example),  we can  find $c_p>0$ such that for any $\tilde t\geq 2D $
\begin{equation*}
\begin{array}{rl}
& \|\tilde v_{\varepsilon}(\cdot,\tilde t)\|_{L^p(\Omega)}\\
 = &\|e^{-D^{-1}\tilde t} e^{\tilde t \Delta } v_0(\cdot)+D^{-1}\displaystyle \int^{\tilde t}_{ 0}e^{(\tilde t-s)(\Delta-D^{-1}) }u_{\varepsilon}(\cdot,D^{-1}s)ds\|_{L^p(\Omega)}\\
\leq & 	c_p e^{-D^{-1} \tilde t}(1+\tilde t^{-1+\frac 1p})\| v_0\|_{L^1(\Omega)}
+\displaystyle\frac {c_p} {D}\displaystyle \int^{\tilde t}_{ 0}e^{-D^{-1}(\tilde t-s) }(1+(\tilde t-s)^{-1+\frac 1p})\|u_{\varepsilon}(\cdot,D^{-1}s)\|_{L^1(\Omega)}ds\\
\leq & 	2c_p e^{-D^{-1} \tilde t}\| v_0\|_{L^1(\Omega)}
+\displaystyle\frac {c_p} {D}(\|u_0\|_{L^1(\Omega)}+\beta\|w_0\|_{L^1(\Omega)})	\displaystyle \int^{\tilde t}_{ 0}e^{-D^{-1}(\tilde t-s) }(1+(\tilde t-s)^{-1+\frac 1p}) ds\\
= & 	2c_p e^{-D^{-1} \tilde t}\| v_0\|_{L^1(\Omega)}
+\displaystyle\frac {c_p} {D}(\|u_0\|_{L^1(\Omega)}+\beta\|w_0\|_{L^1(\Omega)})	\displaystyle \int^{\tilde t}_{ 0}e^{-D^{-1}\sigma}(1+\sigma^{-1+\frac 1p}) d\sigma\\
\leq & 	2c_p e^{-D^{-1} \tilde t}\| v_0\|_{L^1(\Omega)}
+\displaystyle\frac {c_p} {D}(\|u_0\|_{L^1(\Omega)}+\beta\|w_0\|_{L^1(\Omega)})(D+D^{\frac 1 p}	\displaystyle \int^{\infty}_{0}e^{-\sigma}\sigma^{-1+\frac 1p} d\sigma)\\
\leq & 	2c_p \| v_0\|_{L^1(\Omega)}
+ (1+D^{\frac 1 p-1})c_p (\|u_0\|_{L^1(\Omega)}+\beta\|w_0\|_{L^1(\Omega)})(1+\displaystyle \int^{\infty}_{0}e^{-\sigma}\sigma^{-1+\frac 1p} d\sigma)
\end{array}
\end{equation*}
which
  ends up \eqref{2.13a} with  $C(p)= 3 c_p (\|u_0\|_{L^1(\Omega)}+\beta\|w_0\|_{L^1(\Omega)})
  (1+\displaystyle \int^{\infty}_{0}e^{-\sigma}\sigma^{-1+\frac 1p} d\sigma) $.
\end{proof}

\section{Space-time $L^1$-estimates for $u^{m+1}_{\varepsilon}v^{-\alpha}_{\varepsilon}$}
 In this section, taking advantage of special structure of the diffusive processes in \eqref{2.5} (also \eqref{1.8}),
 the classical duality arguments (cf. \cite{TWM3AS, CDF})
 is used  to obtain the fundamental regularity information for a bootstrap argument.
 To this end, we denote $A$ to the self-adjoint realization of $-\Delta +1$ under homogeneous Neumann boundary condition in
  $L^2(\Omega)$ with its domain given by $D(A)=\left\{\varphi \in W^{2,2}(\Omega)|\frac{\partial\varphi}{\partial \nu}=0\right\}$ and $A$ is self-adjoint and possesses a
 family $(A^{\beta})_{\beta \in \mathbb{R}}$ of corresponding densely defined self-adjoint fractional powers.

\begin{lemma}\label{lemma3.1}
Assume that $m> 1$ and $D\geq 1$, then  for all $t>2$
\begin{equation}\label{3.1}
\frac{d}{dt}\int _{\Omega}|A^{-\frac{1}{2}}(u_{\varepsilon}+1)|^2+\int _{\Omega}u^{m+1}_{\varepsilon}v^{-\alpha}_{\varepsilon}\leq C\int_{\Omega}|A^{-1}(u_{\varepsilon}+1)|^{m+1}+C
\end{equation}
 with  constant $C>0$ independent of $D$.
\end{lemma}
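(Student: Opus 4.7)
The plan is to test the first equation of \eqref{2.5} against $A^{-1}(u_{\varepsilon}+1)$ and to exploit the algebraic identity $\Delta A^{-1}=A^{-1}-I$, which holds since $A=-\Delta+1$ is self-adjoint under Neumann conditions. Abbreviating $H_{\varepsilon}:=\varepsilon(u_{\varepsilon}+1)^{M}+u_{\varepsilon}(u_{\varepsilon}+\varepsilon)^{m-1}v_{\varepsilon}^{-\alpha}$, the equation reads $(u_{\varepsilon}+1)_{t}=\Delta H_{\varepsilon}+\beta u_{\varepsilon}f(w_{\varepsilon})$, and combining $\frac{d}{dt}\int_{\Omega}|A^{-1/2}(u_{\varepsilon}+1)|^{2}=2\int_{\Omega}(u_{\varepsilon}+1)_{t}A^{-1}(u_{\varepsilon}+1)$ with integration by parts produces
$$\tfrac{1}{2}\tfrac{d}{dt}\int_{\Omega}|A^{-1/2}(u_{\varepsilon}+1)|^{2}+\int_{\Omega}H_{\varepsilon}(u_{\varepsilon}+1)=\int_{\Omega}H_{\varepsilon}A^{-1}(u_{\varepsilon}+1)+\beta\int_{\Omega}u_{\varepsilon}f(w_{\varepsilon})A^{-1}(u_{\varepsilon}+1).$$
The pointwise lower bound $H_{\varepsilon}(u_{\varepsilon}+1)\geq u_{\varepsilon}^{m+1}v_{\varepsilon}^{-\alpha}+\varepsilon(u_{\varepsilon}+1)^{M+1}$, obtained from $u_{\varepsilon}+1\geq u_{\varepsilon}$ and $u_{\varepsilon}+\varepsilon\geq u_{\varepsilon}$, will supply the absorbing $\int u_{\varepsilon}^{m+1}v_{\varepsilon}^{-\alpha}$ on the LHS.

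To handle the main RHS piece $\int u_{\varepsilon}(u_{\varepsilon}+\varepsilon)^{m-1}v_{\varepsilon}^{-\alpha}A^{-1}(u_{\varepsilon}+1)$, first use $u_{\varepsilon}(u_{\varepsilon}+\varepsilon)^{m-1}\leq(u_{\varepsilon}+1)^{m}$; then the factorization $v_{\varepsilon}^{-\alpha}=v_{\varepsilon}^{-\alpha m/(m+1)}\cdot v_{\varepsilon}^{-\alpha/(m+1)}$ combined with Young's inequality at exponents $(m+1)/m$ and $m+1$ yields $\eta(u_{\varepsilon}+1)^{m+1}v_{\varepsilon}^{-\alpha}+C_{\eta}|A^{-1}(u_{\varepsilon}+1)|^{m+1}v_{\varepsilon}^{-\alpha}$. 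Lemma \ref{Lemma2.3} converts $v_{\varepsilon}^{-\alpha}$ on the second summand into the $D$-independent constant $\delta^{-\alpha}$, while expanding $(u_{\varepsilon}+1)^{m+1}\leq 2^{m}u_{\varepsilon}^{m+1}+2^{m}$ on the first (using $v_{\varepsilon}^{-\alpha}\leq\delta^{-\alpha}$ on the constant remainder) turns it into $\eta 2^{m}u_{\varepsilon}^{m+1}v_{\varepsilon}^{-\alpha}+C_{\eta}\delta^{-\alpha}|A^{-1}(u_{\varepsilon}+1)|^{m+1}+C$. Choosing $\eta$ small then makes the coefficient of $\int u_{\varepsilon}^{m+1}v_{\varepsilon}^{-\alpha}$ here strictly smaller than the one coming from the LHS.

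For the regularization piece $\varepsilon\int(u_{\varepsilon}+1)^{M}A^{-1}(u_{\varepsilon}+1)$, Young with exponents $(M+1)/M$ and $M+1$ produces $\frac{\varepsilon M}{M+1}\int(u_{\varepsilon}+1)^{M+1}$ (absorbable by the $\varepsilon\int(u_{\varepsilon}+1)^{M+1}$ already present on the LHS) and $\frac{\varepsilon}{M+1}\int|A^{-1}(u_{\varepsilon}+1)|^{M+1}$; the $L^{1}$-mass bound \eqref{2.13}, elliptic regularity and the two-dimensional Sobolev embedding $W^{2,1}\hookrightarrow L^{q}$ for every $q<\infty$ provide a $D$-independent bound on $\|A^{-1}(u_{\varepsilon}+1)\|_{L^{M+1}}$, so this contribution is at most $C$. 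For the reactive term, bound $f(w_{\varepsilon})\leq\max_{0\leq s\leq\|w_{0}\|_{L^{\infty}}}f(s)$ using \eqref{2.16} and apply Young in the form $u_{\varepsilon}A^{-1}(u_{\varepsilon}+1)\leq\eta u_{\varepsilon}^{m+1}v_{\varepsilon}^{-\alpha}+C_{\eta}|A^{-1}(u_{\varepsilon}+1)|^{(m+1)/m}v_{\varepsilon}^{\alpha/m}$; H\"older together with the uniform $L^{q}$-bounds on $A^{-1}(u_{\varepsilon}+1)$ and on $v_{\varepsilon}$ (Lemmas \ref{Lemma2.2} and \ref{lemma2.4} under $D\geq 1$) then reduces the residual integrand to a pure constant.

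Collecting everything, multiplying by $2$ and fixing $\eta$ small enough to absorb every $\int u_{\varepsilon}^{m+1}v_{\varepsilon}^{-\alpha}$ contribution into its LHS counterpart yields \eqref{3.1}. The subtlest point, and the main obstacle I anticipate, is the bookkeeping of the $v_{\varepsilon}^{-\alpha}$ weight across all Young splittings: each factor involving $A^{-1}(u_{\varepsilon}+1)$ must carry only a uniformly bounded power of $v_{\varepsilon}^{-\alpha}$ (which Lemma \ref{Lemma2.3} controls with a $D$-independent $\delta$), while the $u_{\varepsilon}$-powers must retain the full $v_{\varepsilon}^{-\alpha}$ weight required for dissipation. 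The restriction $D\geq 1$ enters exclusively through Lemmas \ref{Lemma2.3} and \ref{lemma2.4}, guaranteeing $D$-independence of all constants produced.
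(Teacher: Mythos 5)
Your proposal follows the same duality scheme as the paper: testing the first equation of \eqref{2.5} against $A^{-1}(u_{\varepsilon}+1)$ (the paper equivalently applies $A^{-1}$ first and then tests by $u_{\varepsilon}+1$, see \eqref{3.2}), keeping $\int_{\Omega}H_{\varepsilon}(u_{\varepsilon}+1)\ge\int_{\Omega}u_{\varepsilon}^{m+1}v_{\varepsilon}^{-\alpha}+\varepsilon\int_{\Omega}(u_{\varepsilon}+1)^{M+1}$ on the left, splitting the cross-diffusion term by Young's inequality with the weight $v_{\varepsilon}^{-\alpha}=v_{\varepsilon}^{-\alpha m/(m+1)}\cdot v_{\varepsilon}^{-\alpha/(m+1)}$, and drawing the $D$-independence of all constants from Lemma \ref{Lemma2.3} and Lemma \ref{lemma2.4}; this is exactly the computation in \eqref{3.3}--\eqref{3.8}, and your absorption bookkeeping (small $\eta$, coefficient of $\int u_{\varepsilon}^{m+1}v_{\varepsilon}^{-\alpha}$) is sound.

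The one step I would not accept as written is your bound for the regularization term (and, in the same breath, for the reaction remainder): you claim a uniform bound on $\|A^{-1}(u_{\varepsilon}+1)\|_{L^{M+1}(\Omega)}$ from the mass bound \eqref{2.13} via ``elliptic regularity and $W^{2,1}\hookrightarrow L^{q}$''. Elliptic regularity does not map $L^{1}$ data into $W^{2,1}$ (Calder\'on--Zygmund theory fails at $p=1$), so that chain is broken as stated. The conclusion you want is nevertheless true: in two dimensions $A^{-1}$ maps $L^{1}(\Omega)$ boundedly into $L^{q}(\Omega)$ for every finite $q$, e.g. by self-adjointness and duality, $\int_{\Omega}g\,A^{-1}\varphi\le\|g\|_{L^{1}(\Omega)}\|A^{-1}\varphi\|_{L^{\infty}(\Omega)}\le C\|g\|_{L^{1}(\Omega)}\|\varphi\|_{L^{q'}(\Omega)}$ using $W^{2,q'}(\Omega)\hookrightarrow L^{\infty}(\Omega)$ for $q'>1$ (or via $W^{1,q}$, $q<2$, regularity for $L^{1}$ data). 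With that repair, your estimates of the $\varepsilon$-term and of $\int_{\Omega}v_{\varepsilon}^{\alpha/m}|A^{-1}(u_{\varepsilon}+1)|^{(m+1)/m}$ go through; note that this mapping property is actually stronger than what \eqref{3.1} needs, since it would make the term $\int_{\Omega}|A^{-1}(u_{\varepsilon}+1)|^{m+1}$ itself a constant. The paper avoids $L^{1}$ elliptic theory entirely: for the $\varepsilon$-term it uses $\|A^{-1}(u_{\varepsilon}+1)\|_{W^{2,2}(\Omega)}\le c\|u_{\varepsilon}+1\|_{L^{2}(\Omega)}$ together with the interpolation $\|\cdot\|_{L^{2}}\le\varepsilon_{1}\|\cdot\|_{L^{M+1}}+c(\varepsilon_{1})\|\cdot\|_{L^{1}}$, so that this piece is absorbed by $\varepsilon\int_{\Omega}(u_{\varepsilon}+1)^{M+1}$, and for the reaction term it simply keeps $\int_{\Omega}|A^{-1}(u_{\varepsilon}+1)|^{m+1}$ on the right-hand side plus $\int_{\Omega}v_{\varepsilon}^{\alpha/(m-1)}$, which is bounded by Lemma \ref{lemma2.4} for $D\ge 1$.
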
	
\begin{proof}
Due to $\partial _t(u_{\varepsilon}+1)=u_{\varepsilon t}$, the first equation in \eqref{2.5} can be  written as
\begin{equation}\label{3.2}
\begin{array}{rl}
&\displaystyle\frac{d}{dt} A^{-1}(u_{\varepsilon}+1)+\varepsilon(u_{\varepsilon}+1)^{M} +u_{\varepsilon}(u_{\varepsilon}+\varepsilon)^{m-1} v_\varepsilon ^{-\alpha}\\
=&A^{-1}\left\{\varepsilon(u_{\varepsilon}+1)^M+u_{\varepsilon}(u_{\varepsilon}+\varepsilon)^{m-1}v_\varepsilon ^{-\alpha}+  \beta u_{\varepsilon}
f(w_{\varepsilon})\right\}.	
\end{array}
\end{equation}
Testing \eqref{3.2} by $u_{\varepsilon}+1$, one has
\begin{equation}\label{3.3}
\begin{array}{rl}
&\displaystyle\frac{1}{2}\frac{d}{dt}\int _{\Omega}|A^{-\frac{1}{2}}(u_{\varepsilon}+1)|^2+\varepsilon\int_{\Omega}(u_{\varepsilon}+1)^{M+1}+\int_{\Omega}
u_{\varepsilon}(u_{\varepsilon}+\varepsilon)^{m-1}(u_{\varepsilon}+1)v_\varepsilon ^{-\alpha}\\
=&\varepsilon\displaystyle\int_{\Omega}(u_{\varepsilon}+1)^M A^{-1}(u_{\varepsilon}+1)+\int_{\Omega}u_{\varepsilon}(u_{\varepsilon}+\varepsilon)^{m-1}v_\varepsilon^{-\alpha} A^{-1}(u_{\varepsilon}+1)+
\beta \int_{\Omega} u_{\varepsilon} f(w_{\varepsilon})A^{-1}(u_{\varepsilon}+1).
\end{array}
\end{equation}
Thanks to $W^{2,2}(\Omega)\hookrightarrow L^{\infty}(\Omega)$ in two-dimensional setting and  the standard elliptic regularity in $L^2(\Omega)$,
one can find $c_1>0$ and $c_2>0$ such that
\begin{equation}\label{3.4}
\|\varphi\|^{M+1}_{L^{M+1}(\Omega)}\leq c_1\|\varphi\|^{M+1}_{W^{2,2}(\Omega)}\leq c_2\|A\varphi\|^{M+1}_{L^2(\Omega)}	
\end{equation}
for all $\varphi\in W^{2,2}(\Omega)$ such that $\frac{\partial \varphi}{\partial\nu}|_{
\partial \Omega }=0$.
 Hence by the Young inequality, we can see that
\begin{equation}\label{3.5}
\begin{array}{rl}
\varepsilon\displaystyle\int_{\Omega}(u_{\varepsilon}+1)^MA^{-1}(u_{\varepsilon}+1)\leq &
\displaystyle\frac{\varepsilon}{2}\int_{\Omega}(u_{\varepsilon}+1)^{M+1}+
\displaystyle\frac{\varepsilon}{2}\int_{\Omega}|A^{-1}(u_{\varepsilon}+1)|^{M+1}\\
\leq & \displaystyle\frac{\varepsilon}{2} \|u_{\varepsilon}
+1\|^{M+1}_{L^{M+1}(\Omega)}+ \displaystyle\frac{\varepsilon c_1}{2}\|A^{-1}(u_{\varepsilon}+1)\|^{M+1}_{W^{2,2}(\Omega)}\\
=&\displaystyle\frac{\varepsilon}{2} \displaystyle \int_{\Omega}(u_{\varepsilon}+1)^{M+1}+\displaystyle\frac{\varepsilon c_1 c_2}{2}\|u_{\varepsilon}+1\|^{M+1}_{L^2(\Omega)}
\end{array}
\end{equation}
which,  along with the interpolation inequality that for any $\varepsilon_1>0$, there exits $c(\varepsilon_1)>0$ such that
$\|\varphi\|_{L^2(\Omega)}\leq \varepsilon_1\|\varphi\|_{L^{M+1}(\Omega)}+c(\varepsilon_1)\|\varphi\|_{L^1(\Omega)}$ due to $M>1$, entails that
\begin{equation}\label{3.6}
\varepsilon\displaystyle\int_{\Omega}(u_{\varepsilon}+1)^MA^{-1}(u_{\varepsilon}+1)\leq \displaystyle\frac{3\varepsilon}{4} \displaystyle \int_{\Omega}(u_{\varepsilon}+1)^{M+1}+c_3\|u_{\varepsilon}+1\|^{M+1}_{L^1(\Omega)}
\end{equation}
Furthermore, since  $\|w_\varepsilon(\cdot,t)\|_{L^\infty(\Omega)}\leq \|w_0\|_{L^\infty(\Omega)} $, we  use Lemma \ref{Lemma2.1} and Young's inequality to get that for $t>2$,
\begin{equation}
\begin{array}{rl}\label{3.7}
&\displaystyle\int_{\Omega}u_{\varepsilon}(u_{\varepsilon}+\varepsilon)^{m-1} v_\varepsilon ^{-\alpha} A^{-1}(u_{\varepsilon}+1)
\\[2mm]
\leq &\displaystyle\frac 14 \displaystyle\int_{\Omega}\left\{u_{\varepsilon}(u_{\varepsilon}+\varepsilon)^{m-1}\right\}^{\frac{m+1}{m}}
v_\varepsilon ^{-\alpha}+
c_4\int_{\Omega}|A^{-1}(u_{\varepsilon}+1)|^{m+1}v_\varepsilon ^{-\alpha}\\[2mm]
\leq &\displaystyle\frac{1}{4}\int_{\Omega}u^{\frac{m+1}{m}}_{\varepsilon}(u_{\varepsilon}+\varepsilon)^{\frac{m^2-1}{m}}v_\varepsilon ^{-\alpha}
+c_4\delta ^{-\alpha} \int_{\Omega}|A^{-1}(u_{\varepsilon}+1)|^{m+1}	
\end{array}
\end{equation}
and
\begin{equation}\label{3.8}
\begin{array}{rl}
&\beta \displaystyle\int_{\Omega} u_{\varepsilon}f(w_{\varepsilon})A^{-1}(u_{\varepsilon}+1)\\[3mm]
\leq & \displaystyle\frac 14 \int_{\Omega}u_{\varepsilon}^{m+1} v_\varepsilon^{-\alpha} +c_5\int_{\Omega}v_{\varepsilon}^{\frac{\alpha} m}|A^{-1}(u_{\varepsilon}+1)|^{\frac{m+1}{m}}  \\
\leq &\displaystyle\frac{1}{4}\int_{\Omega}u^{m+1}_{\varepsilon}
v_\varepsilon^{-\alpha}
+
\int_{\Omega}|A^{-1}(u_{\varepsilon}+1)|^{m+1}+c_6\int_{\Omega}v_{\varepsilon}^{\frac{\alpha}{m-1}}.\end{array}
\end{equation}

Noticing that  $u_{\varepsilon}+1\geq \max\{u_{\varepsilon}+\varepsilon,\varepsilon\}$, we have
$$
\int_{\Omega}u_{\varepsilon}(u_{\varepsilon}+\varepsilon)^{m-1}(u_{\varepsilon}+1)v_\varepsilon^{-\alpha}\geq \frac{1}{4}\int_{\Omega}u_{\varepsilon}^{\frac{m+1}{m}}(u_{\varepsilon}+\varepsilon)^{\frac{m^2-1}{m}}v_\varepsilon^{-\alpha}+ \frac{3}{4}\int_{\Omega}u_{\varepsilon}^{m+1}v_\varepsilon^{-\alpha},	
$$
and hence insert  \eqref{3.8} and  \eqref{3.7} into \eqref{3.3} to get
\begin{equation}
\begin{array}{rl}
&\displaystyle\frac{d}{dt}\int _{\Omega}|A^{-\frac{1}{2}}(u_{\varepsilon}+1)|^2+ \int_{\Omega}u_{\varepsilon}^{m+1}v_{\varepsilon}^{-\alpha}\nonumber
\\
\leq  & 2(c_4\delta ^{-\alpha}+1)\displaystyle\int_{\Omega}|A^{-1}(u_{\varepsilon}+1)|^{m+1}+2c_6\int_{\Omega}v_{\varepsilon}^{\frac{\alpha}{m-1}},
\end{array}
\end{equation}
which along with Lemma \ref{lemma2.4} and $D\geq 1$ readily arrive at \eqref{3.1}.
\end{proof}

By means of suitable interpolation arguments,  one can  appropriately estimate the integrals $\int_{\Omega}|A^{-1}(u_{\varepsilon}+1)|^{m+1}$ and
$\int _{\Omega}|A^{-\frac{1}{2}}(u_{\varepsilon}+1)|^2$ in  terms of $ \int_{\Omega}u_{\varepsilon}^{m+1}v_{\varepsilon}^{-\alpha} $ and thereby derive estimate of the form
 $$
 \int^{t+1}_t\int_{\Omega}u^{m+1}_{\varepsilon}v^{-\alpha}_{\varepsilon}\leq C
 $$ with $C>0$ independent of $D$, which can be stated as follows

\begin{lemma}\label{lemma3.2}
If $m>1$ and $D\geq 1$, then there exists $C>0 $  independent of $D$  such that
\begin{equation}\label{3.9}
\int^{t+1}_t\int_{\Omega}u^{m+1}_{\varepsilon}v^{-\alpha}_{\varepsilon}\leq C~~~ ~~\hbox{for all}~~ t>2.
\end{equation}
\end{lemma}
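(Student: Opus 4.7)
The plan is to promote the differential inequality in Lemma~\ref{lemma3.1} into an autonomous ODE for $y(t) := \int_\Omega |A^{-1/2}(u_\varepsilon+1)|^2$ whose dissipation strictly dominates the forcing term, thereby preventing $y$ from escaping, and then to integrate once more over unit-length time intervals to deduce \eqref{3.9}.

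First I would bound the right-hand side of \eqref{3.1} in terms of $y(t)$ itself. Setting $\varphi_\varepsilon := A^{-1}(u_\varepsilon+1)$, the two-dimensional Gagliardo--Nirenberg inequality provides
\begin{equation*}
\|\varphi_\varepsilon\|_{L^{m+1}(\Omega)}^{m+1} \leq c\|\nabla \varphi_\varepsilon\|_{L^2(\Omega)}^{m}\|\varphi_\varepsilon\|_{L^1(\Omega)} + c\|\varphi_\varepsilon\|_{L^1(\Omega)}^{m+1}.
\end{equation*}
Since $A = -\Delta + 1 \geq 1$, $\|\nabla\varphi_\varepsilon\|_{L^2}^2 \leq \|A^{1/2}\varphi_\varepsilon\|_{L^2}^2 = y(t)$, and since $A^{-1}$ preserves the $L^1$-norm on nonnegative inputs, $\|\varphi_\varepsilon\|_{L^1} = \|u_\varepsilon+1\|_{L^1}$ is uniformly controlled by Lemma~\ref{Lemma2.2}. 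Substituting into \eqref{3.1} yields
\begin{equation*}
y'(t) + \int_\Omega u_\varepsilon^{m+1} v_\varepsilon^{-\alpha} \leq C_1 y(t)^{m/2} + C_2.
\end{equation*}

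Next I would bound $y(t)$ from above by the dissipation. Since $A^{-1/2}$ is a self-adjoint contraction on $L^2$, $y(t) \leq \|u_\varepsilon+1\|_{L^2}^2 \leq 2\|u_\varepsilon\|_{L^2}^2 + 2|\Omega|$. The decomposition $u_\varepsilon^2 = (u_\varepsilon^{m+1}v_\varepsilon^{-\alpha})^{2/(m+1)} \cdot v_\varepsilon^{2\alpha/(m+1)}$ combined with H\"older's inequality (conjugate exponents $(m+1)/2$ and $(m+1)/(m-1)$, admissible because $m > 1$) and the $D$-uniform bound on $\|v_\varepsilon\|_{L^{2\alpha/(m-1)}}$ from Lemma~\ref{lemma2.4} gives $\|u_\varepsilon\|_{L^2}^2 \leq C_3\bigl(\int_\Omega u_\varepsilon^{m+1} v_\varepsilon^{-\alpha}\bigr)^{2/(m+1)}$. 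Taking the $(m+1)/2$-th power rearranges into
\begin{equation*}
\int_\Omega u_\varepsilon^{m+1} v_\varepsilon^{-\alpha} \geq c_0\, y(t)^{(m+1)/2} - C_4.
\end{equation*}

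Combining these estimates gives $y'(t) + c_0 y(t)^{(m+1)/2} \leq C_1 y(t)^{m/2} + C_5$. Because the dissipative exponent $(m+1)/2$ strictly exceeds the forcing exponent $m/2$, Young's inequality absorbs the superlinear forcing to leave $y'(t) + (c_0/2) y(t)^{(m+1)/2} \leq C_6$. Together with a uniform-in-$\varepsilon$ bound on $y(2)$ inherited from the $W^{1,\infty}$-regularity of the initial data and the classical short-time theory for \eqref{2.5}, this confines $y(t) \leq M$ for all $t > 2$. Integrating the earlier differential inequality once more over $(t,t+1)$ then bounds $\int_t^{t+1} \int_\Omega u_\varepsilon^{m+1} v_\varepsilon^{-\alpha}$ by $y(t) + C_1\int_t^{t+1}y(s)^{m/2}\,ds + C_2 \leq C$, which is exactly \eqref{3.9}. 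The main subtlety to watch is the $D$-independence of every constant along the way, which hinges on the fact that $C(p)(1+D^{1/p-1}) \leq 2C(p)$ for $D \geq 1$ in Lemma~\ref{lemma2.4}; conceptually, the gap $(m+1)/2 - m/2 = 1/2$ that drives the absorption is precisely the regularizing gain of porous-medium diffusion ($m > 1$) over the linear case.
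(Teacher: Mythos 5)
Your argument is sound in its core steps and it genuinely differs from the paper's route after the common starting point \eqref{3.1}. The paper never produces a closed ODE in $y$ alone: it estimates $\int_\Omega|A^{-1}(u_\varepsilon+1)|^{m+1}$ and $\int_\Omega|A^{-\frac12}(u_\varepsilon+1)|^2$ by $\eta\int_\Omega u_\varepsilon^{m+1}v_\varepsilon^{-\alpha}+c(\eta)$ (via elliptic regularity, $L^2$--$L^p$--$L^1$ interpolation, and the same H\"older splitting $u^p=(u^{m+1}v^{-\alpha})^{p/(m+1)}v^{p\alpha/(m+1)}$ with Lemma~\ref{lemma2.4}), absorbs these for small $\eta$ into the dissipation, and lands on the linearly damped inequality \eqref{3.13}, concluding with the ODE lemma of [SSW]. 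You instead control the right-hand side of \eqref{3.1} by Gagliardo--Nirenberg in terms of $y^{m/2}$ (using $\|\nabla A^{-1}(u_\varepsilon+1)\|_{L^2}^2\le y$ and $\|A^{-1}(u_\varepsilon+1)\|_{L^1}=\|u_\varepsilon+1\|_{L^1}$, both correct), and convert the dissipation into $c_0y^{(m+1)/2}-C$ by the same H\"older/Lemma~\ref{lemma2.4} mechanism, obtaining a superlinearly damped ODE. That buys you an absorbing bound that does not depend on the starting value of $y$ (comparison from infinity), which the paper's linear damping cannot give; the price is the extra interpolation bookkeeping. (Minor shared caveat: if $2\alpha/(m-1)\le 1$, Lemma~\ref{lemma2.4} does not apply verbatim and you should fall back on \eqref{2.15} via $v^q\le 1+v$.)

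The one step you should not leave as stated is the uniform bound on $y(2)$ ``from classical short-time theory.'' Short-time theory for \eqref{2.5} gives bounds that in general depend on $\varepsilon$ and $D$, and a bound on $\|A^{-\frac12}(u_\varepsilon(\cdot,2)+1)\|_{L^2}$ uniform in $\varepsilon\in(0,1)$ and $D\ge1$ is exactly the kind of a priori information being established here, not something automatic; note that the $L^1$ bound of Lemma~\ref{Lemma2.2} is not enough, since $A^{-1/2}$ does not map $L^1(\Omega)$ into $L^2(\Omega)$ in two dimensions. Two repairs are available. Either exploit your superlinear damping: $y'+\tfrac{c_0}{2}y^{(m+1)/2}\le C_6$ on $(2,\infty)$ yields $y(t)\le M$ for all $t\ge 2+\sigma$ with $M=M(\sigma)$ independent of $y(2)$, which proves \eqref{3.9} for $t\ge 3$ (all that is used later in the paper), though not literally for every $t>2$. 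Or extend the differential inequality down to $t=0$: on $[0,2]$ one has $v_\varepsilon\ge e^{-t}\min_{\overline\Omega}v_0>0$ and $D$-uniform $L^p$ bounds on $v_\varepsilon$ by the Duhamel representation, so the analogues of Lemmas~\ref{Lemma2.3}--\ref{lemma2.4} hold there and the ODE can be started at $t=0$, where $y(0)$ is controlled by the $W^{1,\infty}$ initial data. The paper's own proof quietly faces the same issue when invoking the [SSW] lemma, so this is a point worth making explicit rather than a defect peculiar to your approach.
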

\begin{proof} By the standard elliptic regularity in $L^2(\Omega)$, we have
$$\displaystyle\int_{\Omega}|A^{-1}(u_{\varepsilon}+1)|^{m+1}\leq c_1\displaystyle\|u_{\varepsilon}+1\|_{L^2(\Omega)}^{m+1}.
$$
Noticing that for the  given $p\in (2,m+1)$ (for example $p:=\frac {m+3}2$), an application of the interpolation inequality implies that for any $\eta>0$, there exists $c_1(\eta)>0$ such that
$$c_1\|u_{\varepsilon}+1\|^{m+1}_{L^2(\Omega)}\leq \eta \|u_{\varepsilon}+1\|^{m+1}_{L^p(\Omega)} +c_1(\eta)\|u_{\varepsilon}+1\|^{m+1}_{L^1(\Omega)}.$$
On the other hand,  by the H\"{o}lder inequality, we can see that
\begin{eqnarray*}
\int_{\Omega}u^{p}_{\varepsilon}&=&\int_{\Omega}\left(u^{m+1}_{\varepsilon}v_{\varepsilon}^{-\alpha}\right)^{\frac{p}{m+1}}v_{\varepsilon}^{\frac{p\alpha}{m+1}}\nonumber	 \\
&\leq & (\int_{\Omega}u^{m+1}_{\varepsilon}v^{-\alpha}_{\varepsilon})^{\frac p{m+1}}
(\int_{\Omega}v^{\frac{p\alpha}{m+1-p}}_{\varepsilon})^{\frac {m+1-p}{m+1}}.
\end{eqnarray*}
Hence combining  above estimates with Lemma \ref{lemma2.4},  we arrive at
\begin{equation}\label{3.10}
\begin{array}{rl}
\displaystyle\int_{\Omega}|A^{-1}(u_{\varepsilon}+1)|^{m+1}
& \leq \eta\displaystyle\|u_{\varepsilon}+1\|_{L^p(\Omega)}^{m+1}
+c_1(\eta)\|u_{\varepsilon}+1\|^{m+1}_{L^1(\Omega)}\\
& \leq \eta\displaystyle\|u_{\varepsilon}\|_{L^p(\Omega)}^{m+1}+c_2(\eta)\\
& \leq  \displaystyle\eta(\int_{\Omega}u^{m+1}_{\varepsilon}v^{-\alpha}_{\varepsilon})
(\int_{\Omega}v^{\frac{p\alpha}{m+1-p}}_{\varepsilon})^{\frac {m+1-p}{p}}+c_2(\eta)\\
& \leq  \displaystyle\eta c_3(\alpha,m)(\int_{\Omega}u^{m+1}_{\varepsilon}v^{-\alpha}_{\varepsilon})+c_2(\eta)~~~\hbox{for all}~~t>2.
\end{array}
\end{equation}
On the other hand, by self-adjointness of $A^{-\frac 12}$ and  H\"{o}lder's inequality, we get

\begin{equation}\label{3.11}
\begin{array}{rl}
\displaystyle\int_{\Omega}|A^{-\frac 12}(u_{\varepsilon}+1)|^{2}=&
\displaystyle\int_{\Omega}(u_{\varepsilon}+1) A^{-1}(u_{\varepsilon}+1)\\
\leq & \displaystyle\|u_{\varepsilon}+1\|_{L^2(\Omega)}
\|A^{-1}(u_{\varepsilon}+1)\|_{L^2(\Omega)}\\
\leq & c_4\displaystyle\|u_{\varepsilon}+1\|^2_{L^2(\Omega)}\\
\leq & c_5\displaystyle\|u_{\varepsilon}\|^2_{L^{2}(\Omega)}+c_5\\
\leq & \displaystyle c_6\|u_{\varepsilon}\|^{m+1}_{L^{2}(\Omega)}+c_6.
\end{array}
\end{equation}
So in this position, proceeding in the same way as above we also have
\begin{equation}\label{3.12}
\displaystyle\int_{\Omega}|A^{-\frac 12}(u_{\varepsilon}+1)|^{2}
\leq \displaystyle\eta c_3(\alpha,m)(\int_{\Omega}u^{m+1}_{\varepsilon}v^{-\alpha}_{\varepsilon})+c_7(\eta).
\end{equation}
Therefore inserting \eqref{3.10} and \eqref{3.12} into \eqref{3.1} and taking $\eta$ sufficiently small, we have
\begin{equation}\label{3.13}
\frac{d}{dt}\int _{\Omega}|A^{-\frac{1}{2}}(u_{\varepsilon}+1)|^2+  c_8\int _{\Omega}|A^{-\frac{1}{2}}(u_{\varepsilon}+1)|^2 +c_8 \int _{\Omega}u^{m+1}_{\varepsilon}v^{-\alpha}_{\varepsilon}\leq c_9
\end{equation}
for  some $c_8>0, c_9>0$ which are  independent of $D$. Furthermore, by  Lemma 3.4 of \cite{SSW} , we immediately obtain \eqref{3.9}.
\end{proof}
 As the direct consequence of Lemma \ref{lemma3.2} and  Lemma \ref{lemma2.4}, we have
\begin{lemma}\label{lemma3.3}
Let $m>1,D\geq 1$,  then for $p\in (\max\{2,\frac {m+1}{\alpha+1}\}, m+1)$ one can find  a constant $C(p)>0$ independent of $D$ such that
\begin{equation}\label{3.14}
\int^{t+1}_{t}\int_{\Omega}u^{p}_{\varepsilon}(\cdot,s)ds\leq C(p)   ~ ~~\hbox{for all}~~ t>2. 
\end{equation}
\end{lemma}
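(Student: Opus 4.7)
The plan is to derive the estimate by combining a spatial Hölder inequality of exactly the same type that already appeared in the proof of Lemma \ref{lemma3.2} with a temporal Jensen-type inequality, so that the two preceding results Lemma \ref{lemma3.2} and Lemma \ref{lemma2.4} can be patched together. Since the authors explicitly label \eqref{3.14} as a direct consequence, I do not expect any genuine obstacle, and no new quantitative estimate is needed.

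First I would decompose
\begin{equation*}
u_\varepsilon^{p}=\bigl(u_\varepsilon^{m+1}v_\varepsilon^{-\alpha}\bigr)^{\frac{p}{m+1}}\cdot v_\varepsilon^{\frac{p\alpha}{m+1}}
\end{equation*}
and apply Hölder's inequality with conjugate exponents $\frac{m+1}{p}$ and $\frac{m+1}{m+1-p}$ (both strictly greater than $1$ because $p\in(2,m+1)$) to obtain
\begin{equation*}
\int_\Omega u_\varepsilon^{p}\le \Bigl(\int_\Omega u_\varepsilon^{m+1}v_\varepsilon^{-\alpha}\Bigr)^{\frac{p}{m+1}}\Bigl(\int_\Omega v_\varepsilon^{\frac{p\alpha}{m+1-p}}\Bigr)^{\frac{m+1-p}{m+1}}.
\end{equation*}
Setting $q:=\tfrac{p\alpha}{m+1-p}$, the lower bound $p>\tfrac{m+1}{\alpha+1}$ translates precisely into $q>1$, so Lemma \ref{lemma2.4} applies and yields $\|v_\varepsilon(\cdot,s)\|_{L^q(\Omega)}\le C(q)\bigl(1+D^{\frac{1}{q}-1}\bigr)\le 2C(q)$ for $s>2$, thanks to $D\ge 1$ and $q>1$. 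Thus the $v$-factor is bounded by a constant depending only on $p,m,\alpha$ and the initial data, and in particular is independent of $D$ and $t$.

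Finally I would integrate over the interval $(t,t+1)$ and invoke Jensen's inequality (equivalently, a Hölder in time with exponent $\tfrac{m+1}{p}>1$) to move the exponent $\tfrac{p}{m+1}<1$ outside the time integral:
\begin{equation*}
\int_t^{t+1}\Bigl(\int_\Omega u_\varepsilon^{m+1}v_\varepsilon^{-\alpha}\Bigr)^{\frac{p}{m+1}}\,ds\le \Bigl(\int_t^{t+1}\!\!\int_\Omega u_\varepsilon^{m+1}v_\varepsilon^{-\alpha}\,ds\Bigr)^{\frac{p}{m+1}}.
\end{equation*}
The right-hand side is controlled by Lemma \ref{lemma3.2} uniformly in $t>2$ and $D\ge 1$, and combining this with the bound on the $v$-factor gives \eqref{3.14} with a constant $C(p)$ independent of $D$, completing the proof.
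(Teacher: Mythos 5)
Your proposal is correct and follows essentially the same route as the paper: the identical decomposition $u_\varepsilon^{p}=(u_\varepsilon^{m+1}v_\varepsilon^{-\alpha})^{p/(m+1)}v_\varepsilon^{p\alpha/(m+1)}$, with the two factors controlled by Lemma \ref{lemma3.2} and Lemma \ref{lemma2.4} respectively (and the observation $p>\frac{m+1}{\alpha+1}\Leftrightarrow \frac{p\alpha}{m+1-p}>1$, $D\ge 1$ killing the $D$-dependence). The only cosmetic difference is that the paper splits the product pointwise by Young's inequality and integrates both resulting terms over $\Omega\times(t,t+1)$, whereas you use H\"older in space followed by Jensen in time; both are valid and yield the same $D$-independent constant.
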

\begin{proof}
For $p\in (2,m+1)$, we utilize Young's inequality to estimate
\begin{equation*}
\begin{array}{rl}
\displaystyle\int^{t+1}_{t}\int_{\Omega}u^{p}_{\varepsilon}&=\displaystyle\int ^{t+1}_{t}\int_{\Omega}\left(u^{m+1}_{\varepsilon}v_{\varepsilon}^{-\alpha}\right)^{\frac{p}{m+1}}v_{\varepsilon}^{\frac{p\alpha}{m+1}} \\
&\leq \displaystyle\int^{t+1}_{t}\int_{\Omega}u^{m+1}_{\varepsilon}v^{-\alpha}_{\varepsilon}+\int ^{t+1}_{t}\int_{\Omega}v^{\frac{p\alpha}{m+1-p}}_{\varepsilon},
\end{array}
\end{equation*}
which leads to \eqref{3.14} with the help of Lemma \ref{lemma2.4}.
\end{proof}

\section{Boundedness of solutions  $(u_{\varepsilon}, v_{\varepsilon}, w_{\varepsilon})$}

On the basis of the quite well established arguments from parabolic regularity theory, we can turn  the space--time integrability properties of $u^{p}_{\varepsilon}$ into the integrability properties of $\nabla v_{\varepsilon}$ as well as  $\nabla w_{\varepsilon}$.

\begin{lemma}\label{lemma4.1}
Let $m>1,\alpha>0 $ and suppose that $D\geq 1$.  Then for $q\in (2,\frac {2(m+1)}{(3-m)_+})$, there exist constant $C>0 $  independent of $D$  such that
\begin{equation}\label{4.1}
	\|v_{\varepsilon}(\cdot,t)\|_{W^{1,q}(\Omega)}\leq C
\end{equation}
as well as
\begin{equation}\label{4.2}
\|w_{\varepsilon}(\cdot,t)\|_{W^{1,q}(\Omega)}\leq C
\end{equation}
for  all $t>2$.
\end{lemma}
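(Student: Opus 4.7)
The strategy is a standard semigroup estimate combined with the space-time integrability bound on $u_\varepsilon$ from Lemma \ref{lemma3.3}. I would rewrite the second equation in \eqref{2.5} as $v_{\varepsilon t}=(D\Delta-1)v_\varepsilon+u_\varepsilon$ and use the variation-of-constants formula
\[
v_\varepsilon(\cdot,t)=e^{-(t-1)}e^{D(t-1)\Delta}v_\varepsilon(\cdot,1)+\int_1^t e^{-(t-s)}e^{D(t-s)\Delta}u_\varepsilon(\cdot,s)\,ds,
\]
valid for $t>1$; the datum $v_\varepsilon(\cdot,1)$ is $\varepsilon$-independently bounded in $W^{1,q}(\Omega)$ by standard short-time parabolic regularity applied to \eqref{2.5}. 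Applying $\nabla$, taking $L^q$-norms, and invoking the well-known gradient smoothing estimate
\[
\|\nabla e^{\tau\Delta}\varphi\|_{L^q(\Omega)}\le c_1\bigl(1+\tau^{-\frac12-\frac1p+\frac1q}\bigr)\|\varphi\|_{L^p(\Omega)}, \qquad 1\le p\le q,\ \tau>0
\]
(e.g.\ Lemma 1.3 of \cite{WJDE}), I would obtain, for some fixed $p\in(\max\{2,\tfrac{m+1}{\alpha+1}\},m+1)$ to be chosen,
\[
\|\nabla v_\varepsilon(\cdot,t)\|_{L^q(\Omega)}\le c_2+c_2\int_1^t e^{-(t-s)}\bigl(1+(t-s)^{-\frac12-\frac1p+\frac1q}\bigr)\|u_\varepsilon(\cdot,s)\|_{L^p(\Omega)}\,ds.
\]
The crucial point is that since $D\ge 1$ and the exponent $-\tfrac12-\tfrac1p+\tfrac1q$ is negative, the powers of $D$ arising from $e^{D\tau\Delta}$ can be absorbed into a $D$-independent constant.

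The convolution integral I would split at $s=t-1$. On $[1,t-1]$ the kernel is a bounded multiple of $e^{-(t-s)}$; partitioning this interval into unit sub-intervals $[k,k+1]$ and using H\"older's inequality together with Lemma \ref{lemma3.3},
\[
\int_k^{k+1}\|u_\varepsilon(\cdot,s)\|_{L^p}\,ds\le\Bigl(\int_k^{k+1}\|u_\varepsilon(\cdot,s)\|_{L^p}^{p}\,ds\Bigr)^{1/p}\le C^{1/p},
\]
summing the geometric series $\sum_k e^{-(t-k-1)}$ then yields a bound independent of $t$. On the singular interval $[t-1,t]$ I would apply H\"older's inequality in time with exponents $p$ and $p'$:
\[
\int_{t-1}^t(t-s)^{-\frac12-\frac1p+\frac1q}\|u_\varepsilon\|_{L^p}\,ds\le\Bigl(\int_{t-1}^t(t-s)^{-(\frac12+\frac1p-\frac1q)p'}\,ds\Bigr)^{1/p'}\Bigl(\int_{t-1}^t\|u_\varepsilon\|_{L^p}^p\,ds\Bigr)^{1/p}.
\]
The second factor is controlled by Lemma \ref{lemma3.3}; the first is finite precisely when $(\tfrac12+\tfrac1p-\tfrac1q)p'<1$, equivalently $\tfrac1q>\tfrac{4-p}{2p}$. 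Letting $p\nearrow m+1$ shows this is fulfilled exactly when $q<\tfrac{2(m+1)}{(3-m)_+}$, which pins down the claimed range; the $L^q$-part of $\|v_\varepsilon\|_{W^{1,q}}$ is then controlled either by the analogous non-singular semigroup estimate or directly via Lemma \ref{lemma2.4}.

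For $w_\varepsilon$ I would run the same argument on
\[
w_\varepsilon(\cdot,t)=e^{(t-1)\Delta}w_\varepsilon(\cdot,1)-\int_1^t e^{(t-s)\Delta}\bigl(u_\varepsilon f(w_\varepsilon)\bigr)(\cdot,s)\,ds.
\]
Lemma \ref{Lemma2.2}, \eqref{2.16}, gives $\|w_\varepsilon(\cdot,t)\|_{L^\infty(\Omega)}\le\|w_0\|_{L^\infty(\Omega)}$, so $|f(w_\varepsilon)|\le\max_{0\le\xi\le\|w_0\|_\infty}f(\xi)$ and hence $\|u_\varepsilon f(w_\varepsilon)\|_{L^p}\lesssim\|u_\varepsilon\|_{L^p}$, after which the identical splitting-plus-H\"older procedure produces \eqref{4.2}.

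The main obstacle is bookkeeping: verifying that the range $q\in(2,\tfrac{2(m+1)}{(3-m)_+})$ is precisely what the H\"older integrability of the singular kernel allows as $p$ is pushed up to $m+1$, and checking at every step that the constants produced by the smoothing estimates remain $D$-independent (which rests on the observation that all powers of $D$ that appear carry negative exponents, so that $D\ge 1$ is enough). A subsidiary technicality is the $\varepsilon$-independent bound on the starting-data terms $\|v_\varepsilon(\cdot,1)\|_{W^{1,q}}$ and $\|w_\varepsilon(\cdot,1)\|_{W^{1,q}}$, which follows from standard parabolic regularity for \eqref{2.5} on the bounded time window $[0,1]$.
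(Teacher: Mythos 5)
Your overall skeleton is the same as the paper's: a Duhamel representation for the $v_\varepsilon$-equation, $L^p$--$W^{1,q}$ smoothing of the Neumann heat semigroup, H\"older in time against the space--time bound of Lemma \ref{lemma3.3}, with the admissibility condition $\frac{p}{p-1}\bigl(\frac12+\frac1p-\frac1q\bigr)<1$ pinning down $q<\frac{2(m+1)}{(3-m)_+}$ as $p\nearrow m+1$; your exponent bookkeeping is correct and identical to \eqref{4.3}. Your treatment of the $D$-dependence (keeping $e^{D\tau\Delta}$ and observing that all powers of $D$ enter with negative exponents, so $D\ge 1$ suffices) is a legitimate alternative to the paper's time rescaling $\tilde t=Dt$, which instead produces the factor $D^{\frac1p-1}\le 1$; the two devices are equivalent here. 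The $w_\varepsilon$-part via $\|w_\varepsilon\|_{L^\infty}\le\|w_0\|_{L^\infty}$ and \eqref{1.9} is also in line with the paper.

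The genuine gap is in how you launch the representation. You start at the fixed time $1$ and assert that $\|v_\varepsilon(\cdot,1)\|_{W^{1,q}}$ (and $\|w_\varepsilon(\cdot,1)\|_{W^{1,q}}$) is bounded uniformly in $\varepsilon$ and $D$ ``by standard short-time parabolic regularity.'' Nothing in the paper's estimates supports this: before $t>2$ the only $\varepsilon$-uniform information on $u_\varepsilon$ is the $L^1$ bound of Lemma \ref{Lemma2.2}, since the duality estimates of Lemmas \ref{lemma3.1}--\ref{lemma3.3} rely on the lower bound $v_\varepsilon\ge\delta$ of Lemma \ref{Lemma2.3}, valid only for $t>2$; an $L^1$ source is not enough to give a $W^{1,q}$ bound for $v_\varepsilon(\cdot,1)$ with $q>2$, because the kernel singularity $(D(1-s))^{-\frac12-(1-\frac1q)}$ is non-integrable. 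For the same reason, the ``far'' part of your convolution integral over $[1,t-1]$ invokes $\int_k^{k+1}\|u_\varepsilon\|_{L^p}^p\,ds\le C$ for $k=1,2$, where Lemma \ref{lemma3.3} does not apply. Both defects are curable within your framework, and the cure is exactly what the paper does: for $t>2$ write the Duhamel formula only over the last unit window, i.e.\ start at $t-1$ (in the paper, at $\tilde t-1$ in rescaled time), estimate the resulting semigroup term by the $L^1\to W^{1,q}$ smoothing bound \eqref{4.4} so that only $\|v_\varepsilon(\cdot,t-1)\|_{L^1}$ (controlled by \eqref{2.15}) is needed, and then the time integral involves $u_\varepsilon$ only on a recent interval covered by Lemma \ref{lemma3.3}. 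As written, however, the step bounding the time-$1$ data and the early portion of the integral is unjustified.
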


\begin{proof}
From the continuity of function $h(x)=\frac{2x}{(4-x)_+}$,  it follows that  for given $q>2$ suitably close to the number $\frac {2(m+1)}{(3-m)_+}$,  one can choose  $p\in (2,m+1)$ in an appropriately small neighborhood of $m+1$ such that
\begin{equation}\label{4.3}
\frac{p}{p-1}\cdot(\frac{1}{2}+\frac{1}{p}-\frac{1}{q})<1.		
\end{equation}

From the smoothing properties of Neumann heat semigroup $(e^{t\Delta})_{t\geq 0}$, it follow that there exist $c_i>0 (i=1,2) $
such that
\begin{equation}\label{4.4}
\|e^{\Delta}\varphi\|_{W^{1,q}(\Omega)}\leq c_1\|\varphi\|_{L^1(\Omega)}\quad \textnormal{\emph{for all $t\in (0,1)$ and $\varphi\in C^0(\overline{\Omega})$}}	 
\end{equation}
as well as
\begin{equation}\label{4.5}
\|e^{t\Delta}\varphi\|_{W^{1,q}(\Omega)}\leq c_2 t^{-\frac{1}{2}-\frac{1}{2}(\frac{1}{p}-\frac{1}{q})}\|\varphi\|_{L^p(\Omega)} \quad \textnormal{\emph{for all $t\in (0,1)$ and $\varphi\in C^0(\overline{\Omega})$}}.	
\end{equation}
Therefore by the Duhamel representation to the second equation of \eqref{2.20a}, we obtain
\begin{eqnarray}\label{4.6}
\|\tilde{v}_{\varepsilon}(\cdot,t)\|_{W^{1,q}(\Omega)}&=\left\|e^{(\Delta-D^{-1})}\tilde{v}_{\varepsilon}(\cdot,t-1)+D^{-1}\displaystyle\int^t_{t-1}e^{(t-s)
(\Delta-D^{-1})}u_{\varepsilon}(\cdot, D^{-1}s) ds\right\|_{W^{1,q}(\Omega)}\nonumber\\
&\leq \|e^{\Delta}\tilde{v}_{\varepsilon}(\cdot,t-1)\|_{W^{1,q}(\Omega)}+D^{-1}
\displaystyle \int^{t}_{t-1}
\|e^{(t-s)\Delta}u_{\varepsilon}(\cdot, D^{-1}s)\|_{W^{1,q}(\Omega)}ds.	
\end{eqnarray}
Due to  \eqref{4.4} and  \eqref{4.5}, we have
\begin{equation}\label{4.7}
\|e^{\Delta}\tilde{v}_{\varepsilon}(\cdot,t-1)\|_{W^{1,q}(\Omega)}\leq c_1\|\tilde{v}_{\varepsilon}(\cdot,t-1)\|_{L^1(\Omega)}	
\end{equation}
and
\begin{equation}\label{4.8}
\begin{array}{rl}
&\displaystyle \int^{t}_{t-1}\|e^{-(t-s)\Delta}u_{\varepsilon}(\cdot, D^{-1}s)\|_{W^{1,q}(\Omega)}\\
\leq &  c_2\displaystyle\int^{t}_{t-1}(t-s)^{-\frac{1}{2}-(\frac{1}{p}-\frac{1}{q})}\|u_{\varepsilon}(\cdot,D^{-1}s)\|_{L^p(\Omega)}ds\\
\leq & c_2\displaystyle\left\{\int^{t}_{t-1}(t-s)^{^{-\frac{p}{p-1}(\frac{1}{2}+\frac{1}{p}-\frac{1}{q})}}ds\right\}^{\frac{p-1}{p}}
\left\{\int^{t}_{t-1}\|u_{\varepsilon}(\cdot,D^{-1}s)\|_{L^p(\Omega)}^{p}ds\right\}^{\frac{1}{p}}\\
\leq &  c_2 \displaystyle(\int^{1}_{0}\sigma^{^{-\frac{p}{p-1}\cdot\left(\frac{1}{2}+\frac{1}{p}-\frac{1}{q}\right)}}d\sigma)^{\frac{p-1}{p}}
\left\{\int^{t}_{t-1}\|u_{\varepsilon}(\cdot,D^{-1}s)\|_{L^p(\Omega)}^{p}ds\right\}^{\frac{1}{p}}\\
\leq &  c_2 D^{\frac 1p}
\displaystyle(\int^{1}_{0}\sigma^{^{-\frac{p}{p-1}\cdot\left(\frac{1}{2}+\frac{1}{p}-\frac{1}{q}\right)}}d\sigma)^{\frac{p-1}{p}}
\left\{
\int^{D^{-1}t}_{D^{-1}t-D^{-1}}\|u_{\varepsilon}(\cdot,s)\|_{L^p(\Omega)}^{p}ds
\right\}^{\frac1p}\\
\leq &  c_3 D^{\frac 1p},
\end{array}
\end{equation}
where due to $D\geq 1$ and the application of Lemma \ref{lemma3.3}, we have
\begin{equation*}
\int^{D^{-1}t}_{D^{-1}t-D^{-1}}\|u_{\varepsilon}(\cdot,s)\|_{L^p(\Omega)}^{p}ds
\leq
\int^{D^{-1}t}_{D^{-1}t-1}\|u_{\varepsilon}(\cdot,s)\|_{L^p(\Omega)}^{p}ds \leq c_4
\end{equation*}
 and  the finiteness of
$\int^{1}_{0}\sigma^{^{-\frac{p}{p-1}\cdot\left(\frac{1}{2}+\frac{1}{p}-\frac{1}{q}\right)}}d\sigma$
due to \eqref{4.3}.
 Hence combining \eqref{4.6}  with  \eqref{4.7} and \eqref{4.8} gives
\begin{equation*}\begin{array}{rl}
 \|v_{\varepsilon}(\cdot,t)\|_{W^{1,q}(\Omega)}&
 \leq c_2\|\tilde{v}_{\varepsilon}(\cdot,t-1)\|_{L^1(\Omega)}	
 +c_3 D^{\frac 1p-1}\\
 &\leq \displaystyle c_2(\int_{\Omega}u_0+\beta\int_{\Omega}w_0)+c_3
  \end{array}
 \end{equation*}
 for all $t>2$ and thus completes the proof of \eqref{4.1}.

Next due to $\|w_{\varepsilon}(\cdot,t)\|_{L^{\infty}(\Omega)}\leq \|w_{0}\|_{L^{\infty}(\Omega)}$, an  application of Duhamel representation to the third equation in \eqref{2.5} yields
$$
\|w_{\varepsilon}(\cdot,t)\|_{W^{1,q}(\Omega)}\leq \left\|e^{\Delta}w_{\varepsilon}\left(\cdot,t-1\right)\right\|_{W^{1,q}(\Omega)}\nonumber\\
+f(\|w_{0}\|_{L^{\infty}(\Omega)})\int^{t}_{t-1}\left\|e^{(t-s)\Delta}u_{\varepsilon}(\cdot,s)\right\|_{W^{1,q}(\Omega)}ds,
$$
and thereby \eqref{4.2} can be actually derived as above. 
\end{proof}

The following lemma will be used in the derivation of regularity features about spatial and temporal derivatives of $u_{\varepsilon}$.
\begin{lemma}\label{lemma4.2}
Let $p>0$ and $\varphi\in C^{\infty}(\overline{\Omega})$, then
\begin{equation}\label{4.9}
\begin{array}{rl}
& \displaystyle \frac{1}{p}\displaystyle \int_{\Omega}\frac{d}{dt}(u_{\varepsilon}+\varepsilon)^p\cdot\varphi +(p-1)M\varepsilon\int_{\Omega}(u_{\varepsilon}+\varepsilon)^{p-2}(u_{\varepsilon}+1)^{M-1}|\nabla u_{\varepsilon}|^2\varphi\\
=&(1-p)\displaystyle \int_{\Omega}(mu_{\varepsilon}+\varepsilon)(u_{\varepsilon}+\varepsilon)^{m+p-4}|\nabla u_{\varepsilon}|^2\varphi
+\alpha(p-1)\displaystyle \int_{\Omega}(u_{\varepsilon}+\varepsilon)^{m+p-3}v_{\varepsilon}^{-\alpha-1}\nabla u_{\varepsilon}\cdot \nabla v_{\varepsilon}\varphi\\
&+(1-p)\displaystyle \int_{\Omega}(mu_{\varepsilon}+\varepsilon)(u_{\varepsilon}+\varepsilon)^{m+p-3}v_{\varepsilon}^{-\alpha}
\nabla u_{\varepsilon}\cdot\nabla \varphi-M\varepsilon\int_{\Omega}(u_{\varepsilon}+\varepsilon)^{p-1}(u_{\varepsilon}+1)^{M-1}\nabla u_{\varepsilon}\cdot \nabla \varphi\\
&+ \alpha\displaystyle \int_{\Omega}u_{\varepsilon}(u_{\varepsilon}+\varepsilon)^{m+p-2}v_{\varepsilon}^{-\alpha-1}\nabla v_{\varepsilon}\cdot\nabla \varphi+\beta \displaystyle \int_{\Omega}u_{\varepsilon}f(w_{\varepsilon})(u_{\varepsilon}+\varepsilon)^{p-1}\varphi
\end{array}
\end{equation}
for all $t>0$ and $\varepsilon\in(0,1)$.
\end{lemma}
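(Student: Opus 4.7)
My plan is to view \eqref{4.9} as a testing identity obtained by multiplying the first equation of the regularized system \eqref{2.5} by $(u_\varepsilon+\varepsilon)^{p-1}\varphi$ and integrating over $\Omega$. On the time-derivative side the chain rule gives $(u_\varepsilon+\varepsilon)^{p-1} u_{\varepsilon t}=\frac1p\partial_t(u_\varepsilon+\varepsilon)^p$, so after integration the left-hand side becomes $\frac{1}{p}\int_\Omega \partial_t(u_\varepsilon+\varepsilon)^p\varphi$. The right-hand side consists of three terms, two of which carry a Laplacian; I would integrate each by parts once. Since $\partial_\nu u_\varepsilon=\partial_\nu v_\varepsilon=0$ on $\partial\Omega$, the chain rule yields $\partial_\nu(u_\varepsilon+1)^M=0$ and $\partial_\nu[u_\varepsilon(u_\varepsilon+\varepsilon)^{m-1}v_\varepsilon^{-\alpha}]=0$, so no boundary integrals appear even though $\varphi$ is not assumed to satisfy a Neumann condition.

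Next I would expand
$$\nabla[(u_\varepsilon+\varepsilon)^{p-1}\varphi]=(p-1)(u_\varepsilon+\varepsilon)^{p-2}\varphi\,\nabla u_\varepsilon+(u_\varepsilon+\varepsilon)^{p-1}\nabla\varphi,$$
and pair it against the gradients of the two fluxes. For the porous-medium regularization $\nabla(u_\varepsilon+1)^M=M(u_\varepsilon+1)^{M-1}\nabla u_\varepsilon$ produces the nonnegative dissipation $M\varepsilon(p-1)\int_\Omega (u_\varepsilon+\varepsilon)^{p-2}(u_\varepsilon+1)^{M-1}|\nabla u_\varepsilon|^2\varphi$, which I would move to the left-hand side, and a remainder $-M\varepsilon\int_\Omega(u_\varepsilon+\varepsilon)^{p-1}(u_\varepsilon+1)^{M-1}\nabla u_\varepsilon\cdot\nabla\varphi$. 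For the quasilinear flux $F:=u_\varepsilon(u_\varepsilon+\varepsilon)^{m-1}v_\varepsilon^{-\alpha}$ the key computation is the factorization
$$(u_\varepsilon+\varepsilon)^{m-1}+(m-1)u_\varepsilon(u_\varepsilon+\varepsilon)^{m-2}=(mu_\varepsilon+\varepsilon)(u_\varepsilon+\varepsilon)^{m-2},$$
which gives
$$\nabla F=(mu_\varepsilon+\varepsilon)(u_\varepsilon+\varepsilon)^{m-2}v_\varepsilon^{-\alpha}\nabla u_\varepsilon-\alpha u_\varepsilon(u_\varepsilon+\varepsilon)^{m-1}v_\varepsilon^{-\alpha-1}\nabla v_\varepsilon.$$
Taking the dot product with $-\nabla[(u_\varepsilon+\varepsilon)^{p-1}\varphi]$ and distributing yields exactly the four remaining right-hand side terms of \eqref{4.9}, while the reaction contribution $\beta\int_\Omega u_\varepsilon f(w_\varepsilon)(u_\varepsilon+\varepsilon)^{p-1}\varphi$ is simply carried along.

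There is no genuine analytic obstacle here: everything reduces to one integration by parts together with chain-rule computations that are justified by the classical regularity of $(u_\varepsilon,v_\varepsilon,w_\varepsilon)$ and by the pointwise bound $v_\varepsilon>0$ (hence $v_\varepsilon^{-\alpha-1}$ is smooth) together with $u_\varepsilon+\varepsilon\ge\varepsilon>0$ (so $(u_\varepsilon+\varepsilon)^{p-1}$ and $(u_\varepsilon+\varepsilon)^{p-2}$ are well defined for all $p>0$). The only subtle point I would watch for is the algebraic bookkeeping produced by the identity $(u_\varepsilon+\varepsilon)+(m-1)u_\varepsilon=mu_\varepsilon+\varepsilon$, which is responsible for the compact coefficient $(mu_\varepsilon+\varepsilon)$ appearing both in front of $|\nabla u_\varepsilon|^2$ and in the mixed term $\nabla u_\varepsilon\cdot\nabla\varphi$; getting the exponents $m+p-4$, $m+p-3$ and $m+p-2$ right is purely a matter of carefully tracking the collision of the $(u_\varepsilon+\varepsilon)$-powers from the test function and from $\nabla F$.
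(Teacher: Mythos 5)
Your proposal is correct and is precisely the ``straightforward computation'' that the paper invokes without writing out: testing the first equation of \eqref{2.5} by $(u_{\varepsilon}+\varepsilon)^{p-1}\varphi$, integrating by parts (with the boundary terms vanishing because $\partial_\nu u_{\varepsilon}=\partial_\nu v_{\varepsilon}=0$), and using the factorization $(u_{\varepsilon}+\varepsilon)^{m-1}+(m-1)u_{\varepsilon}(u_{\varepsilon}+\varepsilon)^{m-2}=(mu_{\varepsilon}+\varepsilon)(u_{\varepsilon}+\varepsilon)^{m-2}$. Only note that what your computation actually yields is the version of \eqref{4.9} with the factor $v_{\varepsilon}^{-\alpha}$ in the first right-hand term, the factor $u_{\varepsilon}$ in the $\nabla u_{\varepsilon}\cdot\nabla v_{\varepsilon}$ term, and coefficient $-1$ (not $1-p$) on the $(mu_{\varepsilon}+\varepsilon)(u_{\varepsilon}+\varepsilon)^{m+p-3}v_{\varepsilon}^{-\alpha}\nabla u_{\varepsilon}\cdot\nabla\varphi$ term; these are misprints in the stated \eqref{4.9}, and your corrected form is the one the paper actually uses in the proof of Lemma \ref{Lemma4.3}.
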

\begin{proof}
This can be verified by straightforward computation.	
\end{proof}

Thanks to the boundedness of  $\|\nabla v_{\varepsilon}(\cdot,t)\|_{L^q(\Omega)}$  with some $q>2$ in Lemma \ref{lemma4.1}, we can achieve the following $D$-independent $L^{p}$-estimate of $u_{\varepsilon}$ with finite $p$.
\begin{lemma}\label{Lemma4.3}
Let $m>1$ and  $D\geq 1$. Then for any $p>1$, there exist  $t_0\geq 2$ and constant $C(p)>0$  such that
\begin{equation}\label{4.10}
\|u_{\varepsilon}(\cdot,t)\|_{L^{p}(\Omega)}\leq C(p)	
\end{equation}
for all $t>t_0$ and $\varepsilon\in(0,1)$.
\end{lemma}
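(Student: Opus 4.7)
The plan is to test the first equation of \eqref{2.5} against $(u_{\varepsilon}+\varepsilon)^{p-1}$, i.e.~apply Lemma~\ref{lemma4.2} with the constant test function $\varphi\equiv 1$, which eliminates every $\nabla\varphi$-term and produces an identity for $\frac{d}{dt}\int_{\Omega}(u_{\varepsilon}+\varepsilon)^p$. Since $p>1$, the $\varepsilon$-regularized dissipation is nonnegative and may be discarded. The natural porous-medium dissipation $(p-1)\int_{\Omega}(mu_{\varepsilon}+\varepsilon)(u_{\varepsilon}+\varepsilon)^{m+p-4}|\nabla u_{\varepsilon}|^2$ is kept and, up to a positive constant, equals $\|\nabla(u_{\varepsilon}+\varepsilon)^{(m+p-1)/2}\|_{L^2(\Omega)}^2$.

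Next I dispose of the cross-diffusion term $\alpha(p-1)\int_{\Omega}(u_{\varepsilon}+\varepsilon)^{m+p-3}v_{\varepsilon}^{-\alpha-1}\nabla u_{\varepsilon}\cdot \nabla v_{\varepsilon}$ by Young's inequality, splitting the exponent pattern as $(m+p-4)/2 + (m+p-2)/2 = m+p-3$, so that a small multiple of $\int(u_{\varepsilon}+\varepsilon)^{m+p-4}|\nabla u_{\varepsilon}|^2$ gets absorbed into the dissipation (which is allowed since $m>1$ gives $mu_{\varepsilon}+\varepsilon\geq u_{\varepsilon}+\varepsilon$) while the remainder $\int_\Omega(u_{\varepsilon}+\varepsilon)^{m+p-2}v_{\varepsilon}^{-2\alpha-2}|\nabla v_{\varepsilon}|^2$ is left. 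The lower bound $v_{\varepsilon}\geq\delta$ from Lemma~\ref{Lemma2.3} (valid for $t>2$) strips the singular weight. The reaction term $\beta\int_{\Omega}u_{\varepsilon}f(w_{\varepsilon})(u_{\varepsilon}+\varepsilon)^{p-1}$ is handled using $\|w_{\varepsilon}(\cdot,t)\|_{L^\infty(\Omega)}\leq\|w_0\|_{L^\infty(\Omega)}$ (Lemma~\ref{Lemma2.2}) and continuity of $f$, contributing a harmless $C\int_{\Omega}(u_{\varepsilon}+\varepsilon)^p$.

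The remaining bad term $\int_{\Omega}(u_{\varepsilon}+\varepsilon)^{m+p-2}|\nabla v_{\varepsilon}|^2$ is attacked by H\"older together with Lemma~\ref{lemma4.1}, which supplies $\|\nabla v_{\varepsilon}\|_{L^q(\Omega)}\leq C$ for some $q>2$; this yields a bound by $C\|u_{\varepsilon}+\varepsilon\|_{L^{(m+p-2)q/(q-2)}(\Omega)}^{m+p-2}$. Writing this in terms of $\psi:=(u_{\varepsilon}+\varepsilon)^{(m+p-1)/2}$ and applying the 2D Gagliardo--Nirenberg inequality with some lower exponent $s_0$ produces a bound of the form $C\|\nabla\psi\|_{L^2}^{\theta'}\|\psi\|_{L^{s_0}}^{\gamma'}+C\|\psi\|_{L^{s_0}}^{\mu'}$. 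A short computation shows that $\theta'<2$ irrespective of $s_0\in(0,s)$, so a further Young step absorbs a fraction of $\|\nabla\psi\|_{L^2}^2$ into the dissipation, leaving only a power of $\|\psi\|_{L^{s_0}}$, that is, a lower $L^r$-norm of $u_{\varepsilon}$.

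Putting everything together gives a differential inequality of the form $y_{\varepsilon}'(t)+c\,y_{\varepsilon}(t)\leq g(t)$ with $y_{\varepsilon}(t):=\int_{\Omega}(u_{\varepsilon}+\varepsilon)^p$ and $\int_{t}^{t+1}g\leq C$ uniformly in $\varepsilon$ and $D\geq 1$, so an ODE comparison (for instance Lemma~3.4 of \cite{SSW}, already used in Lemma~\ref{lemma3.2}) delivers the desired uniform-in-time $L^p$-bound from some $t_0\geq 2$ onward. The argument is set up inductively: the base case is provided by Lemma~\ref{lemma3.3}, which supplies a space-time $L^{p_0}$-bound for some $p_0>2$ independent of $D$; each iteration raises the exponent. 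The main obstacle, beyond routine exponent bookkeeping, will be choosing $s_0$ at each step so that the lower norm needed in the Gagliardo--Nirenberg interpolation is precisely the one controlled from the previous iteration, and verifying that after finitely many steps any prescribed $p>1$ is reached.
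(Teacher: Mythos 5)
Your outline follows the same core strategy as the paper: take $\varphi\equiv 1$ in Lemma \ref{lemma4.2}, absorb the cross term by Young's inequality using the lower bound $v_\varepsilon\geq\delta$ from Lemma \ref{Lemma2.3}, estimate the resulting term $\int_\Omega (u_\varepsilon+\varepsilon)^{m+p-1}|\nabla v_\varepsilon|^2$ (your exponent $m+p-2$ is a harmless variant) by H\"older against $\|\nabla v_\varepsilon\|_{L^q(\Omega)}$ from Lemma \ref{lemma4.1}, absorb via Gagliardo--Nirenberg/Ehrling into the dissipation $\|\nabla(u_\varepsilon+\varepsilon)^{\frac{m+p-1}{2}}\|_{L^2(\Omega)}^2$, and close with an ODE comparison. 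The one genuine difference is how you close the interpolation: you propose an induction on $p$ seeded by the space-time bound of Lemma \ref{lemma3.3}, with the ``exponent bookkeeping'' you flag as the remaining obstacle, whereas the paper needs no iteration at all -- it chooses the low-order norm in the Ehrling/GN step to be (a power of) the conserved mass $\|u_\varepsilon\|_{L^1(\Omega)}$ from \eqref{2.13} (i.e.\ in your notation $s_0=2/(m+p-1)$, which your own observation ``$\theta'<2$ irrespective of $s_0$'' already permits), so every $p>1$ is reached in a single step and the differential inequality becomes $y'+y\leq c(p)$ with a constant right-hand side, Lemma \ref{lemma3.3} entering only indirectly through Lemma \ref{lemma4.1}. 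One small caution: the dissipation produced by the first equation actually carries the weight $v_\varepsilon^{-\alpha}$ (the printed identity \eqref{4.9} drops it), so to absorb into it and to pass to $\|\nabla(u_\varepsilon+\varepsilon)^{\frac{m+p-1}{2}}\|_{L^2(\Omega)}^2$ with a uniform constant you also need $v_\varepsilon^{-\alpha}$ bounded from below, i.e.\ a pointwise upper bound on $v_\varepsilon$; this follows from Lemma \ref{lemma4.1} together with $W^{1,q}(\Omega)\hookrightarrow L^\infty(\Omega)$ for $q>2$, and is exactly what the paper invokes in its estimate \eqref{4.11}, so it is a point to state rather than a gap.
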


\begin{proof}
According to Lemma \ref{Lemma2.3} and Lemma \ref{lemma2.4}, one can find $c_i>0 (i=1,2) $ independent of $D\geq 1$  fulfilling
\begin{equation}\label{4.11}
v_{\varepsilon}^{-\alpha}(x,t) \geq c_1, \quad   v_{\varepsilon}^{-\alpha-2}(x,t)\leq c_2 ~~\hbox{in}~~\Omega\times(2,\infty)
\end{equation}
for all $\varepsilon\in(0,1)$.

Letting $\varphi \equiv 1$ in \eqref{4.9} and by Young's inequality, we have
\begin{equation*}
\begin{array}{rl}
&  \displaystyle\frac{d}{dt} \int_{\Omega}(u_{\varepsilon}+\varepsilon)^p+
p(p-1)\displaystyle \int_{\Omega}(mu_{\varepsilon}+\varepsilon)(u_{\varepsilon}+\varepsilon)^{m+p-4}v_{\varepsilon}^{-\alpha}|\nabla u_{\varepsilon}|^2
+\int_{\Omega}(u_{\varepsilon}+\varepsilon)^{p}\\
\leq &\alpha p(p-1)\displaystyle \int_{\Omega}u_{\varepsilon}(u_{\varepsilon}+\varepsilon)^{m+p-3}v_{\varepsilon}^{-\alpha-1}\nabla u_{\varepsilon}\cdot \nabla v_{\varepsilon}+\beta p \displaystyle \int_{\Omega}u_{\varepsilon}f(w_{\varepsilon})(u_{\varepsilon}+\varepsilon)^{p-1}+\int_{\Omega}(u_{\varepsilon}+\varepsilon)^{p}\\
\leq &\displaystyle\frac{p(p-1)}2 \displaystyle \int_{\Omega}(mu_{\varepsilon}+\varepsilon)(u_{\varepsilon}+\varepsilon)^{m+p-4}v_{\varepsilon}^{-\alpha}|\nabla u_{\varepsilon}|^2+
\displaystyle\frac{\alpha^2p(p-1)}2 \displaystyle \int_{\Omega}(u_{\varepsilon}+\varepsilon)^{m+p-1} v_{\varepsilon}^{-\alpha-2}|\nabla v_{\varepsilon}|^2+\\
& +\beta pf (\|w_0\|_{L^{\infty}(\Omega)}) \displaystyle \int_{\Omega}u_{\varepsilon} (u_{\varepsilon}+\varepsilon)^{p-1}+\int_{\Omega}(u_{\varepsilon}+\varepsilon)^{p}
\end{array}
\end{equation*}
Furthermore,  recalling  \eqref{4.11}, we can find $c_3>0$ and $c_4>0$ independent of $p$ such  that
\begin{equation}\label{4.12}
\begin{array}{rl}
&  \displaystyle\frac{d}{dt} \int_{\Omega}(u_{\varepsilon}+\varepsilon)^p+c_3
\displaystyle \int_{\Omega}|\nabla (u_{\varepsilon}+\varepsilon)^{\frac{m+p-1}{2}}|^2
+\int_{\Omega}(u_{\varepsilon}+\varepsilon)^{p}\\
\leq & c_4p^2\displaystyle\int_{\Omega}(u_{\varepsilon}+\varepsilon)^{m+p-1} |\nabla v_{\varepsilon}|^2+ c_4 p \int_{\Omega}(u_{\varepsilon}+\varepsilon)^{p}.
\end{array}
\end{equation}
 According to \eqref{4.1},  $\|\nabla v_{\varepsilon}\|^2_{L^q(\Omega)}\leq c_5$ for any fix $q\in (2,\frac {2(m+1)}{(3-m)_+})$, and hence
the H\"{o}lder inequality yields
\begin{equation}\label{4.13}
\begin{array}{rl}
 &c_4p^2\displaystyle\int_{\Omega}(u_{\varepsilon}+\varepsilon)^{m+p-1} |\nabla v_{\varepsilon}|^2
\\
 \leq
 &
  c_4p^2\left\{
  \displaystyle\int_{\Omega}(u_{\varepsilon}+\varepsilon)^{\frac{(m+p-1)q}{q-2}}
  \right\}
  ^{1-\frac2q}
 \|\nabla v_{\varepsilon}\|^2_{L^q(\Omega)}  \\
 \leq
 &   c_4c_5p^2\|( u_{\varepsilon}+\varepsilon)^{\frac{m+p-1}{2}}\|^2_{L^{\frac{2q}{q-2}}(\Omega)}\\
 \leq & \displaystyle\frac{c_3}4\displaystyle \int_{\Omega}|\nabla (u_{\varepsilon}+\varepsilon)^{\frac{m+p-1}{2}}|^2+c_6(p),
\end{array}
\end{equation}
where  we have used an Ehrling-type inequality due to  $ W^{1,2}(\Omega)\hookrightarrow L^{\frac{2q}{q-2}}(\Omega)$ in two-dimensional setting and \eqref{2.13}.

On the other hand, since
 $$
 c_4 p \int_{\Omega}(u_{\varepsilon}+\varepsilon)^{p}\leq \eta \int_{\Omega} (u_{\varepsilon}+\varepsilon)^{m+p-1}+c(\eta)=
 \eta \|( u_{\varepsilon}+\varepsilon)^{\frac{m+p-1}{2}}\|^2_{L^2(\Omega)}+c(\eta)
 $$
 for any $\eta>0$,
 we also have
\begin{equation}\label{4.14}
c_4p\displaystyle\int_{\Omega}(u_{\varepsilon}+\varepsilon)^{p}
  \leq  \displaystyle\frac{c_3}4\displaystyle \int_{\Omega}|\nabla (u_{\varepsilon}+\varepsilon)^{\frac{m+p-1}{2}}|^2+c_7(p).
\end{equation}
Now inserting \eqref{4.14} and \eqref{4.13} into \eqref{4.12}, we infer that for all $t\geq 2$
\begin{equation}\label{5.10}
\frac{d}{dt}\int_{\Omega}(u_{\varepsilon}+\varepsilon)^{p}+\int_{\Omega}(u_{\varepsilon}+\varepsilon)^p\leq c_8(p)	
\end{equation}
with $c_8(p)>0$ independent of $D\geq 1$, which along with a standard comparison argument implies that there exists $t_0>2$
such that
\begin{equation}\label{5.11}
\int_{\Omega}(u_{\varepsilon}(\cdot,t)+\varepsilon)^{p}\leq 2c_8(p)
\end{equation}
for all $t\geq t_0$
and thus yields the claimed conclusion.
\end{proof}

With  the $L^{p}$-estimate  of $u_{\varepsilon}$ at hand, the standard Moser-type iteration can be immediately applied in our approaches to obtain further regularity concerning $L^{\infty}$-norm of $u_{\varepsilon}$ (see Lemma A.1 of \cite{TaoWinklerJDE} for example) and we list the result here without proof.
\begin{lemma}\label{lemma4.4}
Assume  that $m>1, \alpha>0$  and  $D\geq 1$, then there exists $C>0$ such that
\begin{equation}\label{4.17}
\|u(\cdot,t)\|_{L^{\infty}(\Omega)}\leq C	
\end{equation}
for all $t>t_0$  with $t_0$ as in Lemma \ref{Lemma4.3}.
\end{lemma}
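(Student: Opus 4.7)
The plan is to upgrade the uniform-in-$\varepsilon$ $L^p$ bounds of Lemma~\ref{Lemma4.3} to the $L^\infty$ estimate \eqref{4.17} by a Moser-type iteration in the spirit of Lemma~A.1 of \cite{TaoWinklerJDE}. The natural starting point is the testing identity of Lemma~\ref{lemma4.2} with $\varphi\equiv 1$, which, after absorbing the contribution arising from the regularization term $\varepsilon\Delta(u_\varepsilon+1)^M$, produces for every $p>1$ a differential inequality of the type displayed in \eqref{4.12}.

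A preparatory step I would carry out is to boost the bound \eqref{4.1} on $\nabla v_\varepsilon$ to $\|\nabla v_\varepsilon(\cdot,t)\|_{L^\infty(\Omega)}\leq K$ with $K$ independent of $\varepsilon$ and uniform for all $t\geq t_1>t_0$. Since Lemma~\ref{Lemma4.3} supplies $\|u_\varepsilon(\cdot,t)\|_{L^p(\Omega)}\leq C(p)$ for every finite $p$, applying the variation-of-constants formula to the rescaled $v_\varepsilon$-equation (as in the proof of Lemma~\ref{lemma4.1}) together with the $L^p$-$W^{1,\infty}$ smoothing estimate for $(e^{t\Delta})_{t\geq 0}$, and choosing $p$ large enough that $\int_0^1 \sigma^{-1/2-1/p}\,d\sigma$ remains finite, yields $v_\varepsilon\in L^\infty_t W^{1,\infty}_x$ on $(t_1,\infty)\times\Omega$ uniformly in $\varepsilon$.

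With this gradient bound in hand, the cross-diffusion contribution on the right-hand side of the $L^p$ identity can be estimated by $K^2\int_\Omega (u_\varepsilon+\varepsilon)^{m+p-1}$. An application of the two-dimensional Gagliardo--Nirenberg inequality to $(u_\varepsilon+\varepsilon)^{(m+p-1)/2}$, combined with the mass conservation \eqref{2.13}, allows one to absorb a fraction of $\|\nabla (u_\varepsilon+\varepsilon)^{(m+p-1)/2}\|_{L^2(\Omega)}^2$ into the dissipative term on the left at the cost of a polynomial power of $p$ and of a lower-order norm. Choosing the geometric sequence $p_k=2^k p_0$ with $p_0>1$ large enough to activate Lemma~\ref{Lemma4.3}, and setting $M_k:=\sup_{t\geq t_0}\|(u_\varepsilon+\varepsilon)(\cdot,t)\|_{L^{p_k}(\Omega)}$, one then arrives at a Moser-type recursion of the form $M_k\leq (Cp_k^{\sigma})^{1/p_k}\max\{1,M_{k-1}\}$. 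Telescoping gives a convergent series $\sum_k p_k^{-1}\log(Cp_k^{\sigma})$, hence an $\varepsilon$-independent bound on $\sup_{t\geq t_0}\|u_\varepsilon(\cdot,t)\|_{L^\infty(\Omega)}$; combining this with the almost-everywhere convergence $u_\varepsilon\to u$ provided by Lemma~\ref{Lemma2.1} then yields \eqref{4.17}.

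The main obstacle I anticipate is keeping the dependence of the recursion constants on $p$ polynomial rather than exponential, since otherwise the iteration diverges. This requires careful tracking of the interplay between the factor $p^2$ in front of the $|\nabla v_\varepsilon|^2$ term in \eqref{4.12} and the Gagliardo--Nirenberg prefactor. In addition, one must verify that the regularization contribution $\varepsilon M(p-1)\int_\Omega (u_\varepsilon+\varepsilon)^{p-2}(u_\varepsilon+1)^{M-1}|\nabla u_\varepsilon|^2$, which enters with a favorable sign on the left-hand side, does not spoil the uniformity in $\varepsilon$ as $\varepsilon\searrow 0$; the additional signal-singular factor $v_\varepsilon^{-\alpha}$ causes no difficulty thanks to the pointwise lower bound of Lemma~\ref{Lemma2.3}.
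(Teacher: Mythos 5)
Your proposal is correct and follows essentially the route the paper itself indicates: the paper states Lemma \ref{lemma4.4} without proof, appealing precisely to the standard Moser-type iteration of Lemma A.1 in \cite{TaoWinklerJDE} applied to the differential inequality stemming from Lemma \ref{lemma4.2} with $\varphi\equiv 1$, which is what you reconstruct. Your preparatory upgrade of $\nabla v_\varepsilon$ to a uniform $L^\infty$ bound via the variation-of-constants formula uses only the $L^p$ bounds of Lemma \ref{Lemma4.3} and so is non-circular; it parallels the paper's own Lemma \ref{lemma4.5} and is a legitimate (indeed convenient) way to make the iteration close.
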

\begin{remark}
It should be mentioned  that when  $m>1, \alpha>0$  and  $D>0$, one can obtain the boundedness of $L^{\infty}$-norm  of $u_{\varepsilon}$
  for all $t>0$ by the above argument (also see \cite{WNon2} for reference). However, the explicit dependence of $\|u_{\varepsilon}(\cdot,t)\|_{L^{p}(\Omega)}$ on $D$  is required to investigate the large time behavior of solutions in the sequel.
 Hence   $D\geq 1$  is imposed specially for the convenience of our discussion below.
\end{remark}
 At the end of this section, based on the above results  we derive a regularity property for $v$ which
goes beyond those in Lemma \ref{lemma4.1}.

\begin{lemma}\label{lemma4.5}
Let $m>1,\alpha>0 $ and suppose that $D\geq 1$.  Then there exists constant $C>0 $  independent of $D$ and $\varepsilon$ such that
\begin{equation}\label{4.18}
	\|\nabla v_\varepsilon(\cdot,t)\|_{L^{\infty}(\Omega)}\leq C
\end{equation}
as well as
\begin{equation}\label{4.19}
\|\nabla w_\varepsilon(\cdot,t)\|_{L^{\infty}(\Omega)}\leq C
\end{equation}
for  all $t>t_0 +1$.
\end{lemma}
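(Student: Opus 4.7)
My proposal is to derive both bounds from a Duhamel representation combined with the classical $L^p$-to-$W^{1,\infty}$ smoothing estimates for the Neumann heat semigroup $(e^{t\Delta})_{t\geq 0}$ on the two-dimensional domain $\Omega$. Concretely, for any $p>2$ there exists $c_p>0$ such that
\[
\|\nabla e^{t\Delta}\varphi\|_{L^\infty(\Omega)}\leq c_p\bigl(1+t^{-\frac12-\frac1p}\bigr)\|\varphi\|_{L^p(\Omega)}
\qquad\text{for all }t>0,\ \varphi\in C^0(\overline{\Omega}).
\]
I will fix such a $p$; the exponent $-\tfrac12-\tfrac1p$ is strictly greater than $-1$, so the integrals $\int_0^1\sigma^{-\frac12-\frac1p}\,d\sigma$ that will appear below are finite.

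For the $v_\varepsilon$-bound, I would write the second equation of \eqref{2.5} as $\partial_t v_\varepsilon=(D\Delta-1)v_\varepsilon+u_\varepsilon$, use the splitting $e^{\tau(D\Delta-1)}=e^{-\tau}e^{D\tau\Delta}$, and apply the variation-of-constants formula on the time interval $[t-1,t]$ with $t>t_0+1$:
\[
v_\varepsilon(\cdot,t)=e^{-1}e^{D\Delta}v_\varepsilon(\cdot,t-1)+\int_{t-1}^t e^{-(t-s)}e^{D(t-s)\Delta}u_\varepsilon(\cdot,s)\,ds.
\]
Taking $\nabla$ and then the $L^\infty$-norm, the smoothing estimate above yields
\[
\|\nabla v_\varepsilon(\cdot,t)\|_{L^\infty(\Omega)}\leq c_p\bigl(1+D^{-\frac12-\frac1p}\bigr)\|v_\varepsilon(\cdot,t-1)\|_{L^p(\Omega)}+c_p\int_0^1 e^{-\sigma}\bigl(1+(D\sigma)^{-\frac12-\frac1p}\bigr)\|u_\varepsilon(\cdot,t-\sigma)\|_{L^p(\Omega)}\,d\sigma.
\]
Because $t-1>t_0$, Lemma \ref{Lemma4.3} gives $\|u_\varepsilon(\cdot,t-\sigma)\|_{L^p(\Omega)}\leq C(p)$, and Lemma \ref{lemma2.4} provides $\|v_\varepsilon(\cdot,t-1)\|_{L^p(\Omega)}\leq C(p)(1+D^{\frac1p-1})\leq 2C(p)$ since $D\geq 1$. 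The prefactors $D^{-\frac12-\frac1p}$ are $\leq 1$ for $D\geq 1$, and both remaining time integrals are finite and $D$-independent upper bounds after the substitution $\sigma=t-s$, so \eqref{4.18} follows with a constant independent of $D$ and $\varepsilon$.

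The bound \eqref{4.19} is derived analogously, and is actually simpler because the $w_\varepsilon$-equation carries no factor of $D$. Writing
\[
w_\varepsilon(\cdot,t)=e^{\Delta}w_\varepsilon(\cdot,t-1)-\int_{t-1}^t e^{(t-s)\Delta}\bigl(u_\varepsilon f(w_\varepsilon)\bigr)(\cdot,s)\,ds,
\]
and using $\|w_\varepsilon(\cdot,s)\|_{L^\infty(\Omega)}\leq\|w_0\|_{L^\infty(\Omega)}$ together with the continuity of $f$, one has $\|u_\varepsilon f(w_\varepsilon)(\cdot,s)\|_{L^p(\Omega)}\leq f(\|w_0\|_{L^\infty(\Omega)})\|u_\varepsilon(\cdot,s)\|_{L^p(\Omega)}\leq C$ for $s>t_0$. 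The same smoothing estimate applied to the two terms on the right, together with the trivial bound $\|w_\varepsilon(\cdot,t-1)\|_{L^p(\Omega)}\leq |\Omega|^{1/p}\|w_0\|_{L^\infty(\Omega)}$, produces \eqref{4.19}.

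The only delicate point is to track that every constant ends up independent of $D$ (and $\varepsilon$). This reduces to the two observations that $(D\tau)^{-\frac12-\frac1p}\leq\tau^{-\frac12-\frac1p}$ whenever $D\geq 1$, and that $\int_0^1\sigma^{-\frac12-\frac1p}\,d\sigma<\infty$ provided $p>2$; all the other inputs (namely the $D$-uniform $L^p$-estimates on $u_\varepsilon$ and $v_\varepsilon$) have already been established in Lemmas \ref{lemma2.4} and \ref{Lemma4.3}. The requirement $t>t_0+1$ is precisely what ensures that the base time $t-1$ lies in the regime where those uniform-in-$D$ $L^p$ bounds apply.
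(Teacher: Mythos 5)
Your argument is correct and essentially the paper's own: a Duhamel representation over $[t-1,t]$ combined with the $L^{p}$-to-$W^{1,\infty}$ smoothing estimates of the Neumann heat semigroup and the $D$-uniform bounds from Lemmas \ref{lemma2.4} and \ref{Lemma4.3} (resp. the monotonicity of $f$ and $\|w_\varepsilon\|_{L^\infty}\leq\|w_0\|_{L^\infty}$ for the $w$-equation). The only cosmetic difference is that you stay in the original time variable and absorb $D$ via $e^{D\tau\Delta}$ together with $(D\sigma)^{-\frac12-\frac1p}\leq\sigma^{-\frac12-\frac1p}$ for $D\geq 1$, whereas the paper rescales time $\tilde t=Dt$ and bounds the first term through the $L^{1}$-norm of $v_\varepsilon$; both routes give the same $D$- and $\varepsilon$-independent constant.
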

\begin{proof}  Due to
$
\|\nabla e^{\Delta}\tilde{v}(\cdot,\tilde{t}-1)
\|_{L^{\infty}(\Omega)}
\leq c_1\|\tilde{v}(\cdot,\tilde{t}-1)\|_{L^1(\Omega)}	
$,
 as the proof of Lemma \ref{lemma4.1}, we use  Duhamel formula of \eqref{2.20a}  in the following way
 \begin{equation*}
 \begin{array}{rl}
\|\nabla \tilde{v}(\cdot,\tilde{t})\|_{L^{\infty}(\Omega)}&=
\left\|\nabla e^{(\Delta-D^{-1})}\tilde{v}(\cdot,\tilde{t}-1)+D^{-1}\displaystyle\int^{\tilde{t}}_{\tilde{t}-1}\nabla e^{(t-s)
(\Delta-D^{-1})}u(\cdot, D^{-1}s) ds\right\|_{L^{\infty}(\Omega)}\\
&\leq \|\nabla e^{\Delta}\tilde{v}(\cdot,\tilde{t}-1)\|_{L^{\infty}(\Omega)}+D^{-1}
\displaystyle \int^{\tilde{t}}_{\tilde{t}-1}
\|\nabla e^{(\tilde{t}-s)\Delta}u(\cdot, D^{-1}s)\|_{L^{\infty}(\Omega)} d s\nonumber\\
&\leq  c_1\|\tilde{v}(\cdot,\tilde{t}-1)\|_{L^1(\Omega)}+c_2D^{-1}\displaystyle \int^{\tilde{t}}_{\tilde{t}-1}
(1+(\tilde{t}-s)^{-\frac34})ds \max_{\tilde{t}-1\leq s\leq \tilde{t}}\|u(\cdot,D^{-1} s)\|_{L^{4}(\Omega)}.	
\end{array}
\end{equation*}
for all $\tilde{t}>t_0 D+1$, which along with \eqref{4.17} readily leads to \eqref{4.18}. It is obvious that \eqref{4.19} can be proved similarly.
\end{proof}


\section{Asymptotic behavior}
\subsection{Weak decay information}
The standard parabolic regularity property becomes applicable  to  improve the regularity of $u,v$ and $w$ as follows.

\begin{lemma}\label{lemma5.1}
Let $(u,v,w)$ be the nonnegative global solution of \eqref{1.10}--\eqref{1.12} obtained in Lemma \ref{Lemma2.1}. Then there exist $\kappa\in(0,1)$ and  $C>0$ such that for all $t>t_0$
\begin{equation}\label{5.1}
\|u\|_{C^{\kappa,\frac{\kappa}{2}}(\overline{\Omega}\times[t,t+1])}\leq C
\end{equation}
as well as
\begin{equation}\label{5.2}
\|v\|_{C^{2+\kappa,1+\frac{\kappa}{2}}(\overline{\Omega}\times[t,t+1])}+
\|w\|_{C^{2+\kappa,1+\frac{\kappa}{2}}(\overline{\Omega}\times[t,t+1])}\leq C.
\end{equation}
\end{lemma}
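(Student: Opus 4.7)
The plan is to obtain the asserted Hölder bounds as $\varepsilon$-uniform estimates on the regularized solutions $(u_\varepsilon,v_\varepsilon,w_\varepsilon)$ and then pass to the limit using the convergence properties collected in Lemma \ref{Lemma2.1}. All the required $L^\infty$-inputs are now at our disposal on $\Omega\times(t_0+1,\infty)$: $\|u_\varepsilon\|_{L^\infty}\le C$ by Lemma \ref{lemma4.4}, $\|\nabla v_\varepsilon\|_{L^\infty}+\|\nabla w_\varepsilon\|_{L^\infty}\le C$ by Lemma \ref{lemma4.5}, the pointwise lower bound $v_\varepsilon\ge\delta$ by Lemma \ref{Lemma2.3}, and $\|w_\varepsilon\|_{L^\infty}\le\|w_0\|_{L^\infty(\Omega)}$ by Lemma \ref{Lemma2.2}.

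As a preliminary step I would read the equations
\[
v_{\varepsilon t}-D\Delta v_\varepsilon=u_\varepsilon-v_\varepsilon,\qquad
w_{\varepsilon t}-\Delta w_\varepsilon=-u_\varepsilon f(w_\varepsilon)
\]
as linear parabolic equations with right-hand sides uniformly bounded in $L^\infty$. Interior $L^p$-parabolic regularity (Ladyzhenskaya--Solonnikov--Ural'tseva) then provides $\varepsilon$-uniform bounds in $W^{2,1}_p(\Omega\times(t,t+1))$ for every finite $p$ and every $t>t_0+1$, and hence, by Sobolev embedding in two space dimensions, a preliminary Hölder estimate $v_\varepsilon,w_\varepsilon\in C^{1+\kappa_0,(1+\kappa_0)/2}$ uniformly in $\varepsilon$ for some $\kappa_0\in(0,1)$.

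The central step is the Hölder continuity of $u_\varepsilon$ itself. I would rewrite the first equation of \eqref{2.5} in divergence form as
\[
u_{\varepsilon t}=\nabla\cdot\bigl(a_\varepsilon\nabla u_\varepsilon+B_\varepsilon\bigr)+\beta u_\varepsilon f(w_\varepsilon),
\]
where $a_\varepsilon=M\varepsilon(u_\varepsilon+1)^{M-1}+(mu_\varepsilon+\varepsilon)(u_\varepsilon+\varepsilon)^{m-2}v_\varepsilon^{-\alpha}$ and $B_\varepsilon=-\alpha u_\varepsilon(u_\varepsilon+\varepsilon)^{m-1}v_\varepsilon^{-\alpha-1}\nabla v_\varepsilon$. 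Thanks to $v_\varepsilon\ge\delta>0$, $\nabla v_\varepsilon\in L^\infty$ and the a priori bounds for $u_\varepsilon$ and $w_\varepsilon$, the vector field $B_\varepsilon$ and the zero-order source are uniformly bounded, whereas $a_\varepsilon\ge 0$ is of degenerate porous-medium type in the limit $\varepsilon\to 0$ on $\{u_\varepsilon=0\}$. I would then invoke the Hölder regularity theory for bounded weak solutions of such quasilinear parabolic equations (DiBenedetto, \emph{Degenerate Parabolic Equations}, Ch.~III; Porzio--Vespri for lower-order perturbations) to conclude $\|u_\varepsilon\|_{C^{\kappa,\kappa/2}(\overline\Omega\times[t,t+1])}\le C$ uniformly in $\varepsilon$ for some $\kappa\in(0,1)$. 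Combined with the convergences in Lemma \ref{Lemma2.1}, this yields \eqref{5.1}.

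With $u\in C^{\kappa,\kappa/2}$ and the preliminary Hölder regularity of $v,w$ at hand, the right-hand sides $u-v$ and $-uf(w)$ of the $v$- and $w$-equations are themselves Hölder continuous, so interior parabolic Schauder estimates bootstrap to $v,w\in C^{2+\kappa,1+\kappa/2}$, establishing \eqref{5.2}. The main obstacle is clearly the Hölder step for $u$: the effective diffusivity $mu^{m-1}v^{-\alpha}$ degenerates on $\{u=0\}$, so the classical De Giorgi--Nash--Moser estimates do not apply directly and one must rely on the more delicate regularity theory for degenerate quasilinear parabolic equations with $(x,t)$-dependent Hölder coefficients.
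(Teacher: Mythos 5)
Your proposal is correct and follows essentially the same route as the paper: rewrite the regularized $u_\varepsilon$-equation in divergence form, use the $\varepsilon$-uniform bounds on $u_\varepsilon$, $v_\varepsilon^{-\alpha}$, $\nabla v_\varepsilon$ and $w_\varepsilon$ to verify the structure conditions, invoke the Porzio--Vespri-type H\"older regularity for degenerate quasilinear parabolic equations uniformly in $\varepsilon$, pass to the limit via Lemma \ref{Lemma2.1}, and then upgrade $v,w$ to $C^{2+\kappa,1+\frac{\kappa}{2}}$ by parabolic Schauder estimates. The only cosmetic deviation is that you obtain the preliminary H\"older bounds for $v_\varepsilon,w_\varepsilon$ through $W^{2,1}_p$ estimates and embedding, whereas the paper derives them by the same H\"older-regularity argument used for $u_\varepsilon$; both are standard and interchangeable here.
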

\begin{proof}
We rewrite the first equation of \eqref{2.5} in the form
$$
u_{\varepsilon t}=\nabla \cdot a(x,t,u_{\varepsilon},\nabla u_{\varepsilon }) +b(x,t,u_{\varepsilon },\nabla u_{\varepsilon })
$$
where
 $$ a(x,t,u_{\varepsilon},\nabla u_{\varepsilon })=(\varepsilon  M (u_{\varepsilon }+1)^{M-1}+ mu_{\varepsilon }^{m-1}v_{\varepsilon }^{-\alpha}) \nabla u_{\varepsilon }-\alpha u_{\varepsilon }^m v_{\varepsilon }^{-\alpha-1}\nabla v_{\varepsilon }$$
 and
 $$b(x,t,u_{\varepsilon },\nabla u_{\varepsilon })=\beta u_{\varepsilon }f(w_{\varepsilon }).$$
According to Lemmas \ref{Lemma2.3},  \ref{lemma2.4},  \ref{lemma4.4} and
 \ref{lemma4.5},  there exist two constants  $c_1>0  $ and $c_2>0 $ independent of $D\geq 1$  satisfying
\begin{equation}
c_1 \leq v_{\varepsilon }^{-\alpha}(x,t) \leq c_2 ~~\hbox{in}~~\Omega\times(t_0,\infty) \end{equation}
and $$   \|v_{\varepsilon }^{-\alpha-1}(\cdot,t)\|_{L^\infty(\Omega)}+  \|\nabla v_{\varepsilon }(\cdot,t)\|_{L^\infty(\Omega)}+ \|u_{\varepsilon }(\cdot,t)\|_{L^\infty(\Omega)}
+ \|w_{\varepsilon }(\cdot,t)\|_{L^\infty(\Omega)}\leq c_2
 ~~\hbox{for}~~t\geq t_0. $$
 This guarantees that for all  $ (x,t)\in \Omega\times (t_0,\infty)$
 $$
a(x,t,u_{\varepsilon},\nabla u_{\varepsilon })\cdot \nabla u_{\varepsilon}\geq  \frac {c_1m}2 u_{\varepsilon}^{m-1}|\nabla u_{\varepsilon}|^2-c_3,
 $$
$$
|a(x,t,u_{\varepsilon},\nabla u_{\varepsilon })|\leq mc_4 u_{\varepsilon }^{m-1}|\nabla u_{\varepsilon }|+c_4|u_{\varepsilon }|^{\frac{m-1}2}
 $$
 and $$
 |b(x,t,u_{\varepsilon },\nabla u_{\varepsilon })|\leq c_5
$$ with some  constants $c_i>0$ $(i=3,4,5)$ independent of $\varepsilon>0$.
Therefore as an application of the known result on H\"{o}lder regularity in scalar
parabolic equations (\cite{PV}),  there exist $\kappa_1\in(0,1)$ and  $C>0$ such that for all $t>t_0$ and $ \varepsilon\in (0,1)$,
$$
\|u_{\varepsilon }\|_{C^{\kappa_1,\frac{\kappa_1}{2}}(\overline{\Omega}\times[t,t+1])}\leq C,
$$
which along with \eqref{2.7} readily entails \eqref{5.1} with $\kappa=\kappa_1$. Similarly one can also conclude that there exist $\kappa_2\in(0,1)$ and  $C>0$ such that
\begin{equation}\label{5.4}
\|v\|_{C^{\kappa_2,\frac{\kappa_2}{2}}(\overline{\Omega}\times[t,t+1])}+
\|w\|_{C^{\kappa_2,\frac{\kappa_2}{2}}(\overline{\Omega}\times[t,t+1])}\leq C.
\end{equation}
Moreover, since $f\in C^1[0, \infty)$, we have
$$\|uf(w)\|_{C^{\kappa_3,\frac{\kappa_3}{2}}(\overline{\Omega}\times[t,t+1])}\leq C
$$
 with $\kappa_3=\min\{\kappa_1,\kappa_2\}$. Thereupon \eqref{5.2}  with $\kappa=\kappa_3$ follows from the standard parabolic Schauder theory (\cite{LSU}).
\end{proof}

The core of our proof of the stabilization result in Theorem 1.1 consists in the following observation.

\begin{lemma}\label{lemma5.2} Assume that
$m>1$ and $D\geq 1$, we have 
\begin{equation}\label{5.5}
\int^{\infty}_0\int_{\Omega}uf(w)< \infty 
\end{equation}
and
\begin{equation}\label{5.6}
\int^{\infty}_0\int_{\Omega}|\nabla w|^2< \infty. 
\end{equation}	
\end{lemma}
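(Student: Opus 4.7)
The plan is to work at the level of the regularized system \eqref{2.5} and then pass to the limit using the convergence statements in Lemma \ref{Lemma2.1}, since both estimates follow directly from testing the $w_{\varepsilon}$-equation against suitable choices and exploiting the favorable sign of the reaction term.

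First I would establish \eqref{5.5}. Simply integrating the third equation of \eqref{2.5} over $\Omega$ and using the homogeneous Neumann boundary condition gives
\begin{equation*}
\frac{d}{dt}\int_{\Omega}w_{\varepsilon}=-\int_{\Omega}u_{\varepsilon}f(w_{\varepsilon}),
\end{equation*}
so that integrating in time and using the nonnegativity of $w_{\varepsilon}$ yields
\begin{equation*}
\int_0^T\int_{\Omega}u_{\varepsilon}f(w_{\varepsilon})=\int_{\Omega}w_0-\int_{\Omega}w_{\varepsilon}(\cdot,T)\leq\int_{\Omega}w_0
\end{equation*}
for every $T>0$, uniformly in $\varepsilon\in(0,1)$. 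Since $u_{\varepsilon}\to u$ and $w_{\varepsilon}\to w$ almost everywhere in $\Omega\times(0,\infty)$ by Lemma \ref{Lemma2.1}, and since $f$ is continuous, Fatou's lemma delivers \eqref{5.5}.

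Next, to obtain \eqref{5.6} I would test the $w_{\varepsilon}$-equation against $w_{\varepsilon}$ itself. Integration by parts gives
\begin{equation*}
\frac12\frac{d}{dt}\int_{\Omega}w_{\varepsilon}^2+\int_{\Omega}|\nabla w_{\varepsilon}|^2=-\int_{\Omega}u_{\varepsilon}f(w_{\varepsilon})w_{\varepsilon}\leq 0,
\end{equation*}
where the right-hand side is nonpositive thanks to $u_{\varepsilon},w_{\varepsilon}\geq 0$ and $f(w_{\varepsilon})\geq 0$. Integrating in time from $0$ to $T$ then yields
\begin{equation*}
\int_0^T\int_{\Omega}|\nabla w_{\varepsilon}|^2\leq\frac12\int_{\Omega}w_0^2,
\end{equation*}
again uniformly with respect to $\varepsilon\in(0,1)$ and $T>0$. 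Because $\nabla w_{\varepsilon}\rightharpoonup\nabla w$ in $L^2_{\mathrm{loc}}(\overline{\Omega}\times[0,\infty))$ by Lemma \ref{Lemma2.1}, the weak lower semicontinuity of the $L^2$-norm gives $\int_0^T\int_{\Omega}|\nabla w|^2\leq\frac12\int_{\Omega}w_0^2$ for every $T>0$, from which \eqref{5.6} follows upon sending $T\to\infty$.

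There is essentially no serious obstacle here: both assertions are consequences of the fact that the only reaction in the $w$-equation is the nonnegative absorption term $-uf(w)$, so mass and energy are dissipated in time. The only point requiring care is the passage to the limit $\varepsilon\to 0$, but this is handled by the almost-everywhere convergence and weak convergence already furnished in Lemma \ref{Lemma2.1} together with Fatou's lemma and weak lower semicontinuity.
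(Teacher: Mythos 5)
Your proposal is correct and follows essentially the same route as the paper: integrating the $w_{\varepsilon}$-equation in space and time plus Fatou's lemma for \eqref{5.5}, and testing with $w_{\varepsilon}$ plus weak lower semicontinuity under the weak convergence of $\nabla w_{\varepsilon}$ for \eqref{5.6}. No gaps.
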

\begin{proof}
An integration of the third equation in (\ref{2.10}) yields
\begin{equation*}
\int_{\Omega}w_{\varepsilon}(\cdot,t)+\int^t_0\int_{\Omega}u_{\varepsilon}f(w_{\varepsilon})=\int_{\Omega}w_0
\quad \hbox{for all}~~t>0.	
\end{equation*}
Since $w_{\varepsilon}\geq 0$, this entails
\begin{equation}\label{5.7}
\int^{\infty}_0\int_{\Omega}u_{\varepsilon}f(w_{\varepsilon})
\leq \int_{\Omega}w_0
\end{equation}
which implies \eqref{5.5} on an application of Fatou's lemma, because  $u_{\varepsilon}f(w_{\varepsilon})\rightarrow
uf(w)$ a.e. in $\Omega\times (0,\infty)$.

We test the same equation by $w_{\varepsilon}$ to see that
\begin{equation*}
\frac{1}{2}\int_{\Omega}w^2_{\varepsilon}(\cdot,t)+\int^{t}_0\int_{\Omega}|\nabla w_{\varepsilon}|^2=\frac{1}{2}\int_{\Omega}w^2_0-\int^t_0\int_{\Omega}u_{\varepsilon}f(w_{\varepsilon})w_{\varepsilon}
\leq \frac{1}{2}\int_{\Omega}w^2_0
\end{equation*}
and thereby verifies \eqref{5.6} via (2.12).  
\end{proof}
The above decay information of $w_{\varepsilon}$ seems to be  weak for the derivation of the large-time behavior of $u_{\varepsilon}$ and $v_{\varepsilon}$.  Indeed,  under additional constraint on $D$,  we  obtain the decay information concerning the gradient of $u_{\varepsilon}$ and $v_{\varepsilon}$ which makes our latter analysis possible.
\begin{lemma}\label{lemma5.3}
Let $m>1$ and $\alpha>0$. There exists  $D_0\geq 1$  such that whenever $D>D_0$, the solution of \eqref{1.10}--\eqref{1.12}  constructed in Lemma
\ref{Lemma2.1} satisfies 
\begin{equation}\label{5.8}
\int^{\infty}_{3}\int_{\Omega}|\nabla u^{\frac{m+1}{2}}
|^2
< \infty 
\end{equation}
as well as
\begin{equation}\label{5.9}
\int^{\infty}_{3}\int_{\Omega}|\nabla v
|^2
< \infty
\end{equation}
	\end{lemma}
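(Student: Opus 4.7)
The plan is to derive an ODI for the Lyapunov functional
\[
F_\varepsilon(t) := \int_\Omega u_\varepsilon^2 + \eta \int_\Omega |\nabla v_\varepsilon|^2
\]
along regularized solutions of \eqref{2.5}, with the weight $\eta>0$ chosen in coordination with $D$, and then to pass to the limit $\varepsilon\to 0$ via the convergences in Lemma \ref{Lemma2.1}.

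First I would test the $u_\varepsilon$-equation in \eqref{2.5} by $u_\varepsilon$. Integration by parts on the cross-diffusive term produces the dissipation $\tfrac{4m}{(m+1)^2}\int v_\varepsilon^{-\alpha}|\nabla u_\varepsilon^{(m+1)/2}|^2$, together with a cross term proportional to $\alpha\int u_\varepsilon^m v_\varepsilon^{-\alpha-1}\nabla u_\varepsilon\cdot\nabla v_\varepsilon$. Relying on the $D$-independent bounds $v_\varepsilon\geq\delta$ (Lemma \ref{Lemma2.3}), $\|u_\varepsilon\|_{L^\infty}\leq C$ (Lemma \ref{lemma4.4}) and $\|\nabla v_\varepsilon\|_{L^\infty}\leq C$ (Lemma \ref{lemma4.5}), all valid for $t>t_0+1$, and applying Young's inequality to absorb half of the cross term into the dissipation, I expect to obtain $D$-independent constants $c_1,c_2,C_1>0$ with
\[
\frac{d}{dt}\int u_\varepsilon^2 + c_1 \int |\nabla u_\varepsilon^{(m+1)/2}|^2 \leq c_2 \int |\nabla v_\varepsilon|^2 + C_1 \int u_\varepsilon f(w_\varepsilon),
\]
where the last term is time-integrable by Lemma \ref{lemma5.2}. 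In parallel, testing the $v_\varepsilon$-equation by $-\Delta v_\varepsilon$ and using $\bigl|\int u_\varepsilon\Delta v_\varepsilon\bigr|\leq \tfrac{D}{2}\int|\Delta v_\varepsilon|^2 + \tfrac{1}{2D}\int u_\varepsilon^2$ gives
\[
\frac{d}{dt}\int |\nabla v_\varepsilon|^2 + D\int |\Delta v_\varepsilon|^2 + 2\int |\nabla v_\varepsilon|^2 \leq \frac{1}{D}\int u_\varepsilon^2.
\]

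Setting $\eta := c_2+1$ and summing absorbs the $c_2\int|\nabla v_\varepsilon|^2$ on the right of the $u$-estimate into the $v$-dissipation, yielding
\[
\frac{d}{dt}F_\varepsilon + c_1 \int|\nabla u_\varepsilon^{(m+1)/2}|^2 + \int|\nabla v_\varepsilon|^2 \leq \frac{\eta}{D}\int u_\varepsilon^2 + C_1\int u_\varepsilon f(w_\varepsilon).
\]
A Poincar\'e--Wirtinger inequality for $u_\varepsilon^{(m+1)/2}$ combined with the uniform $L^\infty$-bound on $u_\varepsilon$ gives $\int u_\varepsilon^2 \leq C_3\int|\nabla u_\varepsilon^{(m+1)/2}|^2 + C_4$; fixing $D_0 := 2\eta C_3/c_1$ then allows the gradient piece on the right to be absorbed into the left-hand dissipation for every $D>D_0$, leaving a residual of size $\eta C_4/D$.

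The hard part, and the step that will demand the most care, is precisely this residual: it is bounded in $t$ but not time-integrable, so a na\"ive integration from $3$ to $T$ produces an error $\eta C_4 (T-3)/D$ that does not close the argument. The natural way around this is to rerun the entire calculation with the shifted quantity $\int (u_\varepsilon-u_\star)^2$ in place of $\int u_\varepsilon^2$ in the definition of $F_\varepsilon$. The mass-conservation identity $\int u_\varepsilon + \beta\int w_\varepsilon = |\Omega|u_\star$ from Lemma \ref{Lemma2.2} forces the mean-square residue in the Poincar\'e--Wirtinger step to be proportional to $(\int w_\varepsilon)^2$ rather than a time-independent constant, and this in turn is controlled via the pointwise bound $\|w_\varepsilon\|_{L^\infty}\leq \|w_0\|_{L^\infty}$ and the integrability $\int_0^\infty\int u_\varepsilon f(w_\varepsilon) < \infty$ supplied by Lemmas \ref{Lemma2.2}--\ref{lemma5.2}. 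With this refinement, the right-hand side of the ODI becomes time-integrable, integration from $3$ to $T$ produces $\varepsilon$-uniform bounds on $\int_3^T\int|\nabla u_\varepsilon^{(m+1)/2}|^2$ and $\int_3^T\int|\nabla v_\varepsilon|^2$, and Fatou's lemma together with the a.e.\ and weak convergences in Lemma \ref{Lemma2.1} then delivers \eqref{5.8} and \eqref{5.9}.
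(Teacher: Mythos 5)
Your overall architecture matches the paper's: an ODI for $\int_\Omega u_\varepsilon^2+\eta\int_\Omega|\nabla v_\varepsilon|^2$ at the level of \eqref{2.5}, absorption of the cross term using the $D$-independent bounds of Lemmas \ref{Lemma2.3}, \ref{lemma4.4}, integrability of $\int_\Omega u_\varepsilon f(w_\varepsilon)$ from \eqref{5.7}, a choice of $\eta$ and $D_0$, and Fatou/lower semicontinuity at the end. The difficulty you flag is real, but your proposed repair has a genuine gap at exactly that point. After replacing $\int_\Omega u_\varepsilon^2$ by $\int_\Omega(u_\varepsilon-u_\star)^2$, the right-hand side of the $v$-estimate contributes $\frac{\eta}{D}\int_\Omega(u_\varepsilon-u_\star)^2$, and by \eqref{2.13} the part not controlled by the Poincar\'e step is of size $\frac{C}{D}\bigl(\int_\Omega w_\varepsilon\bigr)^2$. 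You assert this is time-integrable ``via $\|w_\varepsilon\|_{L^\infty}\le\|w_0\|_{L^\infty}$ and $\int_0^\infty\int_\Omega u_\varepsilon f(w_\varepsilon)<\infty$'', but these two facts alone do not imply $\int_3^\infty\bigl(\int_\Omega w_\varepsilon\bigr)^2dt<\infty$: they are perfectly consistent with $\int_\Omega w_\varepsilon(\cdot,t)$ (which is nonincreasing) staying bounded away from zero for all time, e.g. if the mass of $w_\varepsilon$ sits where $u_\varepsilon$ is small. To make your route work you would additionally need the mechanism of Lemma \ref{lemma5.4} run at the $\varepsilon$-level: the lower mass bound \eqref{2.14}, the Poincar\'e inequality combined with $\int_0^\infty\int_\Omega|\nabla w_\varepsilon|^2\le\frac12\int_\Omega w_0^2$ (from the proof of Lemma \ref{lemma5.2}) to get $\int^\infty\bigl(\int_\Omega f(w_\varepsilon)\bigr)^2dt<\infty$, and then $f(w)\ge c\,w$ on $[0,\|w_0\|_{L^\infty}]$ (using $f'>0$ in \eqref{1.9}) to pass from $f(w_\varepsilon)$ to $w_\varepsilon$. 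None of this is circular, but it is a substantive missing step, not a routine remark.

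The paper avoids the residual altogether by a cleaner device in the $v$-testing step: since $\int_\Omega\Delta v_\varepsilon=0$, one may write $-\int_\Omega u_\varepsilon\Delta v_\varepsilon=-\int_\Omega\bigl(u_\varepsilon-\mu_\varepsilon(t)\bigr)\Delta v_\varepsilon$ with the time-dependent quantity $\mu_\varepsilon(t)=\bigl(\frac1{|\Omega|}\int_\Omega u_\varepsilon^{\frac{m+1}2}\bigr)^{\frac2{m+1}}$, so the right-hand side becomes $\frac{\eta}{D}\int_\Omega|u_\varepsilon-\mu_\varepsilon(t)|^2$; the elementary inequality $|\xi^\mu-\eta^\mu|\ge\eta^{\mu-1}|\xi-\eta|$ together with $\mu_\varepsilon(t)\ge\overline{u_0}$ and Poincar\'e gives $\overline{u_0}^{m-1}\int_\Omega|u_\varepsilon-\mu_\varepsilon|^2\le c_5\int_\Omega|\nabla(u_\varepsilon+\varepsilon)^{\frac{m+1}2}|^2$ (see \eqref{6.15}--\eqref{5.18}), which is absorbed for $D>D_0$ with no leftover constant. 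Note also that in your rerun you would need this same refined step, since subtracting $u_\star$ leaves you with $\int_\Omega(u_\varepsilon-\overline{u_\varepsilon})^2$, which cannot be bounded by $\int_\Omega|\nabla u_\varepsilon^{\frac{m+1}2}|^2$ by a plain Poincar\'e--Wirtinger inequality for $u_\varepsilon^{\frac{m+1}{2}}$ alone; the lower bound $\mu_\varepsilon\ge\overline{u_0}>0$ is what makes the conversion work. Aside from this, your simplification of the cross term via the $L^\infty$ bound of $u_\varepsilon$ (in place of the paper's $L^{2(m+1)}$ bound plus Gagliardo--Nirenberg) is legitimate.
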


\begin{proof}
Testing the first equation of \eqref{2.5} by $(u_{\varepsilon}+\varepsilon)$ and applying Young's inequality, we obtain
\begin{eqnarray}\label{6.8}
\frac{d}{dt}\int_{\Omega}(u_{\varepsilon}+\varepsilon)^2+\int_{\Omega}v_{\varepsilon}^{-\alpha}(u_{\varepsilon}+\varepsilon)^{m-1}|\nabla u_{\varepsilon}|^2\nonumber\\
\leq \alpha^2 \int_{\Omega}v_{\varepsilon}^{-\alpha-2} (u_{\varepsilon}+\varepsilon)^{m+1}|\nabla v_{\varepsilon}|^2+ 2\int_{\Omega}(u_{\varepsilon}+\varepsilon)u_{\varepsilon} f(w_{\varepsilon}).	
\end{eqnarray}
On the other hand, let $\mu_{\varepsilon}(t)=\left(\frac{1}{|\Omega|}\int_{\Omega}u^{\frac{m+1}{2}}_{\varepsilon}(\cdot,t)\right)^{\frac{2}{m+1}}$, then testing the second equation of \eqref{2.5} by $-\Delta v_{\varepsilon}$ shows
\begin{eqnarray}\label{6.9}
\frac{d}{dt}\int_{\Omega}|\nabla v_{\varepsilon}|^2+D\int_{\Omega}(\Delta v_{\varepsilon})^2+2\int_{\Omega}|\nabla v_{\varepsilon}|^2\leq \frac{1}{D}\int_{\Omega}|u_{\varepsilon}(\cdot,t)-\mu_{\varepsilon}(t)|^2. 	
\end{eqnarray}
Hence combining \eqref{6.8} and \eqref{6.9}, we obtain that for any  $\eta>0$
\begin{equation}\label{6.10}
\begin{array}{rl}
&\displaystyle\frac{d}{dt}\left(\int_{\Omega}(u_{\varepsilon}+\varepsilon)^2+\eta\int_{\Omega}|\nabla v_{\varepsilon}|^2\right)+\eta D\int_{\Omega}|\Delta v_{\varepsilon}|^2+2\eta \int_{\Omega}|\nabla v_{\varepsilon}|^2
+\int_{\Omega}v^{-\alpha}_{\varepsilon}(u_{\varepsilon}+\varepsilon)^{m-1}|\nabla u_{\varepsilon}|^2\\[3mm]
\leq
&
\displaystyle\frac{\eta}{D}\int_{\Omega}|u_{\varepsilon}(\cdot,t)-\mu_{\varepsilon}(t)|^2
+\alpha^2 \int_{\Omega}v_{\varepsilon}^{-\alpha-2} (u_{\varepsilon}+\varepsilon)^{m+1}|\nabla v_{\varepsilon}|^2
+2\int_{\Omega}(u_{\varepsilon}+\varepsilon)u_{\varepsilon}f(w_{\varepsilon}).
\end{array}
\end{equation}
In view of Lemma \ref{Lemma2.3} and Lemma \ref{lemma2.4}, there exist $c_i>0 (i=1,2) $ independent of $D\geq 1$  satisfying
\begin{equation}
v_{\varepsilon}^{-\alpha}(x,t) \geq c_1, \quad   v_{\varepsilon}^{-\alpha-2}(x,t)\leq c_2 ~~\hbox{in}~~\Omega\times(2,\infty)
\end{equation}
for all $\varepsilon\in(0,1)$. Therefore from \eqref{6.10}, it follows that
\begin{equation}\label{6.12}
\begin{array}{rl}
&\displaystyle\frac{d}{dt}\left(\int_{\Omega}(u_{\varepsilon}+\varepsilon)^2+\eta\int_{\Omega}|\nabla v_{\varepsilon}|^2\right)+\eta D\int_{\Omega}|\Delta v_{\varepsilon}|^2+2\eta \int_{\Omega}|\nabla v_{\varepsilon}|^2
+c_1\int_{\Omega}(u_{\varepsilon}+\varepsilon)^{m-1}|\nabla u_{\varepsilon}|^2\\[3mm]
\leq
&
\displaystyle\frac{\eta}{D}\int_{\Omega}|u_{\varepsilon}(\cdot,t)-\mu_{\varepsilon}(t)|^2
+\alpha^2 c_2 \int_{\Omega} (u_{\varepsilon}+\varepsilon)^{m+1}|\nabla v_{\varepsilon}|^2
+2\int_{\Omega}(u_{\varepsilon}+\varepsilon)u_{\varepsilon}f(w_{\varepsilon}).
\end{array}
\end{equation}
According to Lemma \ref{Lemma4.3} with $p=2(m+1)$, we have
$$
\displaystyle\left(\int_{\Omega}(u_{\varepsilon}+\varepsilon)^{2(m+1)}\right)^{\frac 12}
\leq c_3,
$$
and then use  the Gagliardo--Nirenberg inequality and H\"older inequality to arrive at
\begin{equation}\label{6.13}
\begin{array}{rl}
&\displaystyle\int_{\Omega}(u_{\varepsilon}+\varepsilon)^{m+1}|\nabla v_{\varepsilon}|^2\\
 \leq &\displaystyle\left(\int_{\Omega}(u_{\varepsilon}+\varepsilon)^{2(m+1)}\right)^{\frac{1}{2}}\left(\int_{\Omega}|\nabla v_{\varepsilon}|^4\right)^\frac{1}{2}\\
\leq & c_4\displaystyle\left(\int_{\Omega}(u_{\varepsilon}+\varepsilon)^{2(m+1)}\right)^{\frac{1}{2}}\left(\|\Delta v_{\varepsilon}\|^2
+\|\nabla v_{\varepsilon}\|^2_{L^2(\Omega)}\right)	
\\
\leq & c_3c_4 (\|\Delta v_{\varepsilon}\|^2
+\|\nabla v_{\varepsilon}\|^2_{L^2(\Omega)}).
\end{array}
\end{equation}
Therefore inserting \eqref {6.13} into \eqref {6.12} yields
\begin{equation}\label{6.14}
\begin{array}{rl}
&\displaystyle\frac{d}{dt}(\int_{\Omega}(u_{\varepsilon}+\varepsilon)^2+\eta\int_{\Omega}|\nabla v_{\varepsilon}|^2)+\eta D\int_{\Omega}|\Delta v_{\varepsilon}|^2+2\eta \int_{\Omega}|\nabla v_{\varepsilon}|^2
+\frac{4c_1}{(m+1)^2}\int_{\Omega} |\nabla (u_{\varepsilon}+\varepsilon)^{\frac{m+1}{2}}|^2\\[3mm]
\leq
&
\displaystyle\frac{\eta}{D}\int_{\Omega}|u_{\varepsilon}(\cdot,t)-\mu_{\varepsilon}(t)|^2
+\alpha^2 c_2 c_3c_4 (\|\Delta v_{\varepsilon}\|^2_{L^2(\Omega)}
+\|\nabla v_{\varepsilon}\|^2_{L^2(\Omega)})
+2\int_{\Omega}(u_{\varepsilon}+\varepsilon)u_{\varepsilon}f(w_{\varepsilon}).
\end{array}
\end{equation}

By the elementary inequality:
 $$
\frac{\xi^\mu-\eta^\mu}{\xi-\eta}
\geq \eta^{\mu-1}~~\hbox{for}~~\mu\ge 1,\xi\geq 0, \eta\geq 0~~\hbox{ and}~~\xi\neq\eta,
$$
we have
 $$
|u_{\varepsilon}^{\frac{m+1}2}(\cdot,t)-\mu_{\varepsilon}^{\frac{m+1}2}|
\geq \mu_{\varepsilon}(\cdot,t)^{\frac{m-1}2}|u_{\varepsilon}(\cdot,t)-\mu_{\varepsilon}(t)|
$$
and thus
\begin{equation}\label{6.15}
{\mu}^{m-1}_{\varepsilon}(t)\int_{\Omega}|u_{\varepsilon}(\cdot,t)-\mu_{\varepsilon}(t)|^2\leq \int_{\Omega}|u^{\frac{m+1}{2}}_{\varepsilon}(\cdot,t)-\mu^{\frac{m+1}{2}}_{\varepsilon}(t)|^2.
\end{equation}
Furthermore by the H\"older inequality and the noncreasing property of $t\mapsto \int_\Omega u_\varepsilon(\cdot,t)$,
$$\mu_\varepsilon(t)\geq \frac 1{|\Omega|}\int_\Omega u_\varepsilon(\cdot,t)\geq \frac 1{|\Omega|}\int_\Omega u_0 $$
 and thereby the
Poincar\'e inequality entails that for some $c_5>0$
\begin{equation}\label{5.18}
\begin{array}{rl}
&\overline{u_0}^{m-1}\displaystyle\int_{\Omega}|u_{\varepsilon}(\cdot,t)-\mu_{\varepsilon}(t)|^2\\[3mm]
\leq &\displaystyle\int_{\Omega}|u^{\frac{m+1}{2}}_{\varepsilon}(\cdot,t)-\mu^{\frac{m+1}{2}}_{\varepsilon}(t)|^2\\[2mm]
\leq  & c_5\displaystyle\int_{\Omega}|\nabla u^{\frac{m+1}{2}}_{\varepsilon}|^2\\[2mm]
\leq  & c_5\displaystyle\int_{\Omega}|\nabla (u_{\varepsilon}+\varepsilon)^{\frac{m+1}{2}}|^2.	
\end{array}
\end{equation}

Hence substituting \eqref{5.18} into \eqref{6.14} shows that
\begin{equation*}
\begin{array}{rl}
&\displaystyle\frac{d}{dt}(\int_{\Omega}(u_{\varepsilon}+\varepsilon)^2+\eta\int_{\Omega}|\nabla v_{\varepsilon}|^2)+
(\frac{4c_1}{(m+1)^2}-\frac {\eta c_5}{D\overline{u_0}^{m-1}})\int_{\Omega} |\nabla (u_{\varepsilon}+\varepsilon)^{\frac{m+1}{2}}|^2\\[3mm]
\leq
&
\displaystyle(\alpha^2 c_2 c_3c_4-\eta D) \|\Delta v_{\varepsilon}\|^2_{L^2(\Omega)}
+(\alpha^2 c_2 c_3c_4-2\eta)\|\nabla v_{\varepsilon}\|^2_{L^2(\Omega)}
+2\displaystyle\int_{\Omega}(u_{\varepsilon}+\varepsilon)u_{\varepsilon}f(w_{\varepsilon})\\
\leq & \displaystyle(\alpha^2 c_2 c_3c_4-\eta) \|\Delta v_{\varepsilon}\|^2_{L^2(\Omega)}
+(\alpha^2 c_2 c_3c_4-2\eta)\|\nabla v_{\varepsilon}\|^2_{L^2(\Omega)}
+2\|u_{\varepsilon}(\cdot,t)\|_{L^\infty(\Omega)}+1)\displaystyle\int_{\Omega}u_{\varepsilon}f(w_{\varepsilon})
\end{array}
\end{equation*}
and  hence completes the proof upon the choice of $D_0:=\max\{1,\frac {\alpha^2 c_2 c_3c_4c_5(m+1)^2}{3c_1\overline{u_0}^{m-1}}\}$. Indeed, for any
$D>D_0$, it is possible to find $\eta>0$ such that
 $$\frac{3c_1}{(m+1)^2}\geq \frac {\eta c_5}{D\overline{u_0}^{m-1}},~~
\alpha^2 c_2 c_3c_4\leq \eta
$$
and thereby
 \begin{equation*}
\begin{array}{rl}
&\displaystyle\frac{d}{dt}(\int_{\Omega}|u_{\varepsilon}+\varepsilon|^2+\eta\int_{\Omega}|\nabla v_{\varepsilon}|^2)+
\frac{c_1}{(m+1)^2}\int_{\Omega} |\nabla (u_{\varepsilon}+\varepsilon)^{\frac{m+1}{2}}|^2
+\eta\int_{\Omega}|\nabla v_{\varepsilon}|^2\\[3mm]
\leq
&
2(\|u_{\varepsilon}(\cdot,t)\|_{L^\infty(\Omega)}+1)\displaystyle\int_{\Omega}u_{\varepsilon}f(w_{\varepsilon}).
\end{array}
\end{equation*}
 Therefore, in view of \eqref{5.7}, \eqref{4.17} and \eqref{4.18}  we see that for any $t>3$,
 \begin{equation}\label{5.19}
\displaystyle\int^t_3\int_{\Omega} |\nabla (u_{\varepsilon}+\varepsilon)^{\frac{m+1}{2}}|^2
+\displaystyle\int^t_3\int_{\Omega}|\nabla v_{\varepsilon}|^2
\leq c_6+
c_6\displaystyle\int^\infty_3\displaystyle\int_{\Omega}u_{\varepsilon}f(w_{\varepsilon})\leq c_6+c_6\int_\Omega w_0.
\end{equation}
with constant $c_6>0$ independent of $\varepsilon$ and time $t$,
  which  implies that \eqref{5.8} and \eqref{5.9} is valid due to the lower semicontinuity of  norms.
\end{proof}

\subsection{Decay of $w$}
The integrability statement in Lemma \ref{lemma5.2} can  be turned into
the decay property  of $w$ with respect
to the norm  in $L^\infty(\Omega)$, thanks to the fact that
  $\|u(\cdot,t)\|_{L^1(\Omega)}$  is increasing with time, while $\|w(\cdot,t)\|_{L^\infty(\Omega)}$ is nonincreasing.
\begin{lemma}\label{lemma5.4}
The third component of  the weak solution of \eqref{1.10}--\eqref{1.12} constructed  in Lemma
\ref{Lemma2.1}
 fulfills
\begin{equation}\label{5.20}
\|w(\cdot,t)\|_{L^{\infty}(\Omega)}\rightarrow 0 \quad \textnormal{\emph{as $t\rightarrow \infty$}}.	
\end{equation}	
\end{lemma}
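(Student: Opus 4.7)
The plan is to argue by contradiction, exploiting the integrability of $uf(w)$ and $|\nabla w|^2$ collected in Lemma \ref{lemma5.2}, together with the uniform H\"older estimates from Lemma \ref{lemma5.1} and the monotonicity statements in \eqref{2.16} and \eqref{2.14}. Since $t\mapsto\|w(\cdot,t)\|_{L^\infty(\Omega)}$ is nonincreasing, the limit
\[
L:=\lim_{t\to\infty}\|w(\cdot,t)\|_{L^\infty(\Omega)}
\]
exists in $[0,\infty)$, and I would aim to rule out $L>0$.

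The main step would be a shift-and-compactness argument. Fix any sequence $t_k\to\infty$ and introduce the shifted functions $u_k(x,\tau):=u(x,t_k+\tau)$ and $w_k(x,\tau):=w(x,t_k+\tau)$ on $\overline{\Omega}\times[0,1]$. Lemma \ref{lemma5.1} makes $\{u_k\}$ and $\{w_k\}$ bounded in $C^{\kappa,\kappa/2}$ and $C^{2+\kappa,1+\kappa/2}$ respectively, uniformly in $k$; so by Arzel\`a--Ascoli, along a subsequence (not relabeled) $u_k\to u_\infty$ and $w_k\to w_\infty$ in $C^0(\overline{\Omega}\times[0,1])$, and also $\nabla w_k\to\nabla w_\infty$ uniformly. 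Now from \eqref{5.6} I read off
\[
\int_0^1\int_\Omega|\nabla w_k|^2 = \int_{t_k}^{t_k+1}\int_\Omega|\nabla w|^2 \longrightarrow 0,
\]
which forces $\nabla w_\infty\equiv 0$, so $w_\infty=w_\infty(\tau)$ depends only on time. Combined with the uniform convergence $\|w(\cdot,t_k+\tau)\|_{L^\infty(\Omega)}\to L$, this pins down $w_\infty(\tau)\equiv L$ on $[0,1]$.

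The contradiction would then arise from \eqref{5.5}. Finiteness of $\int_0^\infty\int_\Omega uf(w)$ gives
\[
\int_0^1\int_\Omega u_k f(w_k) = \int_{t_k}^{t_k+1}\int_\Omega uf(w)\longrightarrow 0,
\]
while the uniform convergence and continuity of $f$ identify this limit with $f(L)\int_0^1\int_\Omega u_\infty\, dx\,d\tau$. Since $L>0$ forces $f(L)>0$, we would obtain $\int_\Omega u_\infty(\cdot,\tau)\,dx = 0$ for a.e. $\tau\in(0,1)$. But \eqref{2.14} asserts $\|u(\cdot,t)\|_{L^1(\Omega)}\geq\|u_0\|_{L^1(\Omega)}>0$, and passing to the limit in $\int_\Omega u_k(\cdot,\tau)\,dx$ via uniform convergence yields $\int_\Omega u_\infty(\cdot,\tau)\,dx\geq\|u_0\|_{L^1(\Omega)}>0$, a contradiction. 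Hence $L=0$, which is \eqref{5.20}.

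The principal technical point will be ensuring that the limit $w_\infty$ is genuinely spatially constant: this is exactly where the gradient bound in \eqref{5.6} must interact with the $C^{2+\kappa,1+\kappa/2}$-regularity from Lemma \ref{lemma5.1} in order to upgrade the convergence of $\nabla w_k$ enough to pass to the limit in $\|\nabla w_k\|_{L^2(\Omega)}^2$. Once this spatial homogenization of $w$ is secured, the contradiction with the fact that the $L^1$-mass of $u$ cannot fall below that of $u_0$ is routine.
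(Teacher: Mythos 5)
Your argument is correct, but it follows a genuinely different route from the paper's. The paper never forms limit functions along time shifts: it works directly with inequalities. Writing $\overline{f(w)}$ for the spatial mean, it uses Cauchy--Schwarz and the Poincar\'e inequality (together with the mass lower bound $\int_\Omega u\geq\int_\Omega u_0$ and the $L^\infty$ bounds on $u,w$) to estimate $\overline{u_0}\int_\Omega f(w)\leq\int_\Omega uf(w)+c\bigl(\int_\Omega|\nabla w|^2\bigr)^{1/2}$, so that Lemma \ref{lemma5.2} gives $\int^\infty\|f(w(\cdot,t))\|_{L^1(\Omega)}^2\,dt<\infty$; combined with the uniform H\"older bounds this yields $f(w(\cdot,t))\to0$ in $L^1$, hence $w(\cdot,t_j)\to0$ a.e.\ and in $L^1$ along a subsequence, and a Gagliardo--Nirenberg interpolation against the uniform $W^{1,4}$ bound on $w$ upgrades this to $\|w(\cdot,t_j)\|_{L^\infty}\to0$, after which monotonicity of $t\mapsto\|w(\cdot,t)\|_{L^\infty}$ finishes. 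You instead run an omega-limit/compactness argument: Arzel\`a--Ascoli on the shifted functions via the Schauder bounds of Lemma \ref{lemma5.1}, spatial homogenization of the limit from $\int_{t_k}^{t_k+1}\int_\Omega|\nabla w|^2\to0$, identification of the limit with the monotone limit $L$, and exclusion of $L>0$ by passing to the limit in $\int_{t_k}^{t_k+1}\int_\Omega uf(w)\to0$ against the mass lower bound for $u$. Both proofs use the same ingredients (Lemmas \ref{lemma5.1}, \ref{lemma5.2}, \eqref{2.14}, \eqref{2.16}); yours trades the paper's quantitative interpolation step for soft compactness, which makes the mechanism (spatially constant limit forced to vanish by the persistent mass of $u$) more transparent, while the paper's version avoids extracting limit functions altogether and only needs weak information on $\nabla w$ plus the a priori gradient bound, at the cost of the slightly more ad hoc Poincar\'e/Gagliardo--Nirenberg manipulations. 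One minor point to make explicit in your write-up: the limit of $\int_0^1\int_\Omega|\nabla w_k|^2$ can be handled either by the uniform convergence of $\nabla w_k$ you invoke (available from the $C^{2+\kappa,1+\kappa/2}$ bound) or simply by weak lower semicontinuity, and the lower bound $\int_\Omega u(\cdot,t)\geq\int_\Omega u_0$ for the limit solution should be noted as inherited from \eqref{2.14} via the convergences of Lemma \ref{Lemma2.1}.
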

\begin{proof} Writing $\overline{u_0}:=\frac 1{|\Omega|}\int_{\Omega}u_0$ and $\overline{f(w)}:=\frac 1{|\Omega|}\int_{\Omega}f(w)$,
we use  the Cauchy--Schwarz inequality and Poincar\'e inequality to see that for all $t>0$
\begin{eqnarray*}
\overline{u_0}\cdot\int_{\Omega}f(w)&= &\int_{\Omega}u\overline {f(w)}\\
&= &\int_{\Omega}u f(w)-\int_{\Omega}u(f(w)-\overline {f(w)})\\
&\leq &\int_{\Omega}u f(w)+c_1\|u\|_{L^{\infty}(\Omega)} \|f'(w)\|_{L^{\infty}(\Omega)}\left\{
\int_{\Omega}|\nabla w|^2\right\}^{\frac{1}{2}}.	
\end{eqnarray*}
Thanks to the boundedness of $u$ and $w$, we have
\begin{eqnarray*}
{\overline{u_0}}^2\cdot\left\{\int_{\Omega}f(w)\right\}^{2}&\leq &2\left\{\int_{\Omega}u f(w)\right\}^2+c_2\int_{\Omega}|\nabla w|^2\\
&\leq &c_3\int_{\Omega}u f(w)+c_2\int_{\Omega}|\nabla w|^2.
\end{eqnarray*}
 Hence from  Lemma \ref{lemma5.2} it follows that
\begin{equation*}
\int^{\infty}_1\|f(w(\cdot,t))\|^2_{L^1(\Omega)}dt< \infty.
\end{equation*}
which, along with the uniformly H\"older estimate from Lemma \ref{lemma5.1},  implies that
\begin{equation}\label{5.21}
f(w(\cdot,t))\rightarrow 0	\quad \textnormal{in $L^1(\Omega)$}\quad \textnormal{as $t\rightarrow \infty$}
\end{equation}
and thereby we may extract a subsequence
$(t_j)_{j\in \mathbb{N}}
\subset\mathbb{N} $ such that as $t_j\rightarrow\infty$,
 $f(w(\cdot,t_j))\rightarrow 0	$
almost everywhere in  $\Omega $.
Recalling function $f$ is positive on $(0,\infty)$ and $f(0)=0$, this necessarily requires that
$w(\cdot,t_j)\rightarrow 0	$
almost everywhere in  $\Omega $ as $t_j\rightarrow\infty.$
Furthermore, the dominated convergence theorem ensures
that \begin{equation}\label{5.22}
w(\cdot,t_j)\rightarrow 0	\quad \textnormal{in $L^1(\Omega)$}\quad \textnormal{as $t_j\rightarrow \infty$}.
\end{equation}
Now  invoking  the Gagliardo--Nirenberg inequality in two dimensional setting, we have
\begin{equation*}
\|w(\cdot,t_j)\|_{L^{\infty}(\Omega)}\leq c_4\|\nabla w(\cdot,t_j)\|^{\frac{4}{5}}_{L^{4}(\Omega)}\|w(\cdot,t_j)\|^{\frac{1}{5}}_{L^{1}(\Omega)}+c_4\|w(\cdot,t_j)\|_{L^1(\Omega)}	
\end{equation*}
and thus
\begin{equation}\label{5.23}
\|w(\cdot,t_j)\|_{L^{\infty}(\Omega)}\rightarrow 0	\quad \textnormal{as $t_j\rightarrow \infty$}.
\end{equation}
Since $t\mapsto \|w(\cdot,t)\|_{L^{\infty}(\Omega)}$ is noncreasing by Lemma \ref{Lemma2.3}, \eqref{5.20} indeed results
from  \eqref{5.23}.
\end{proof}

\subsection{Convergence of $u$}
In this subsection, we will show that  $u$  stabilizes toward the constant  $\overline{u_0}+\beta \overline{w_0}$ as $t\rightarrow \infty$.
Note that  a first step in this direction  is provided by the finiteness of $\int^{\infty}_{3}\int_{\Omega}|\nabla u^{\frac{m+1}{2}}|^2$
in Lemma \ref{lemma5.3}, which implies that $\|\nabla u^{\frac{m+1}2}(\cdot,t_k)\|_{L^2(\Omega)}$ along a suitable sequence of numbers
$t_k\rightarrow\infty$. However, in order to make sure convergence along the entire net  $t\rightarrow\infty$,  a certain decay property of $u_t$ seems to be required.
\begin{lemma}\label{lemma5.5}
 We have \begin{equation}\label{5.24}
 \int^\infty_3\|
 u_t(\cdot,t)
 \|^2_{(W_0^{1,2}(\Omega))^*}dt
 <\infty.
\end{equation}
\end{lemma}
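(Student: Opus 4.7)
The plan is to derive \eqref{5.24} directly from the weak formulation \eqref{2.2} together with the regularity already established in Sections 4 and 5. First I would insert into \eqref{2.2} a test function of the form $\varphi(x,t)=\eta(t)\psi(x)$ with $\eta\in C_{0}^{\infty}((3,\infty))$ and $\psi\in C_{0}^{\infty}(\Omega)$; since such $\psi$ vanishes in a neighborhood of $\partial\Omega$, the compatibility condition $\partial\varphi/\partial\nu=0$ is automatic. This yields, in the distributional sense in time,
\begin{equation*}
\frac{d}{dt}\int_{\Omega}u(\cdot,t)\psi=\int_{\Omega}\frac{u^{m}}{v^{\alpha}}\Delta\psi+\beta\int_{\Omega}uf(w)\psi=-\int_{\Omega}\nabla\!\Bigl(\tfrac{u^{m}}{v^{\alpha}}\Bigr)\cdot\nabla\psi+\beta\int_{\Omega}uf(w)\psi,
\end{equation*}
where one integration by parts has been performed, legitimate because $\psi$ vanishes on $\partial\Omega$ and $u^{m}/v^{\alpha}$ has the Sobolev regularity verified in the next step. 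Extending the identity to $\psi\in W_{0}^{1,2}(\Omega)$ by density and applying Cauchy--Schwarz together with Poincar\'e's inequality then gives the pointwise-in-time bound
\begin{equation*}
\|u_{t}(\cdot,t)\|_{(W_{0}^{1,2}(\Omega))^{\ast}}^{2}\leq 2\int_{\Omega}\bigl|\nabla(u^{m}v^{-\alpha})\bigr|^{2}+C\int_{\Omega}\bigl(uf(w)\bigr)^{2}
\end{equation*}
for a.e.\ $t>3$.

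To estimate the first right-hand integrand I would use the product rule
$\nabla(u^{m}v^{-\alpha})=m\,u^{m-1}v^{-\alpha}\nabla u-\alpha\,u^{m}v^{-\alpha-1}\nabla v$,
the uniform bounds $\|u(\cdot,t)\|_{L^{\infty}(\Omega)}\leq C$ from Lemma \ref{lemma4.4} and $v(\cdot,t)\geq\delta>0$ from Lemma \ref{Lemma2.3}, and the identity $u^{m-1}|\nabla u|^{2}=\bigl(\tfrac{2}{m+1}\bigr)^{2}|\nabla u^{(m+1)/2}|^{2}$. Together these produce
\begin{equation*}
\int_{\Omega}\bigl|\nabla(u^{m}v^{-\alpha})\bigr|^{2}\leq C\int_{\Omega}|\nabla u^{(m+1)/2}|^{2}+C\int_{\Omega}|\nabla v|^{2},
\end{equation*}
whose time integral over $(3,\infty)$ is finite by Lemma \ref{lemma5.3}. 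For the second integrand the $L^{\infty}$-control of $u$ (Lemma \ref{lemma4.4}) and of $f(w)$ (coming from \eqref{2.16} together with $f\in C^{1}$) yield $\int_{\Omega}(uf(w))^{2}\leq\|uf(w)\|_{L^{\infty}(\Omega)}\int_{\Omega}uf(w)\leq C\int_{\Omega}uf(w)$, and this is time-integrable on $(3,\infty)$ by Lemma \ref{lemma5.2}. Adding the two contributions and integrating in $t$ completes the proof of \eqref{5.24}.

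The only delicate point I anticipate is the rigorous justification of the integration by parts identity for the limit solution $u$, which a priori is only weak. I would handle this either by approximating in $\psi$ through $C_{0}^{\infty}(\Omega)$ and exploiting the established $L^{2}_{\mathrm{loc}}$-regularity of $\nabla(u^{m}v^{-\alpha})$ (which makes the right-hand side a well-defined bounded linear functional on $W_{0}^{1,2}(\Omega)$ for a.e.\ $t$), or by performing the whole calculation at the $u_{\varepsilon}$-level, where the classical smoothness allows the integration by parts outright, and then invoking the convergences \eqref{2.7}--(2.12) together with weak lower-semicontinuity of the norm in $L^{2}((3,\infty);(W_{0}^{1,2}(\Omega))^{\ast})$ to pass $\varepsilon\searrow 0$.
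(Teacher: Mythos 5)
Your proposal is correct and, in its second (``fallback'') form, coincides with the paper's own proof: there, the first equation of \eqref{2.5} is tested with $\varphi\in C_0^\infty(\Omega)$, $|\int_\Omega u_{\varepsilon t}\varphi|$ is bounded by $c\bigl(\|\nabla(u_\varepsilon+\varepsilon)^{\frac{m+1}{2}}\|_{L^2(\Omega)}+\|\nabla v_\varepsilon\|_{L^2(\Omega)}\bigr)\|\varphi\|_{W^{1,2}(\Omega)}+\beta\int_\Omega u_\varepsilon f(w_\varepsilon)\,\|\varphi\|_{L^\infty(\Omega)}$ using the $L^\infty$ bound on $u_\varepsilon$ (Lemma \ref{lemma4.4}) and the lower bound on $v_\varepsilon$ (Lemma \ref{Lemma2.3}); this is squared, integrated over $(3,T)$, controlled via \eqref{5.19} and \eqref{5.7}, and the conclusion for $u$ follows from weak lower semicontinuity of the norm in $L^2((3,\infty);(W_0^{1,2}(\Omega))^*)$ --- precisely the ingredients you list. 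Your primary route, carried out on the limit $u$ through the weak formulation \eqref{2.2}, rests on the same estimates but needs the weak chain rule and integration by parts for $\nabla(u^m v^{-\alpha})$, which is the delicate point you correctly flag and which the $\varepsilon$-level computation avoids; so the two routes differ only in where the limit $\varepsilon\searrow 0$ is taken. One small genuine improvement in your version: you estimate the reaction term by $\int_\Omega(uf(w))^2\le\|uf(w)\|_{L^\infty(\Omega)}\int_\Omega uf(w)\le C\int_\Omega uf(w)$ and pair it with $\|\psi\|_{L^2(\Omega)}$ via Cauchy--Schwarz, whereas the paper pairs it with $\|\varphi\|_{L^\infty(\Omega)}$ and invokes ``$W^{1,2}(\Omega)\hookrightarrow L^\infty(\Omega)$'' in two dimensions, an embedding that does not hold; your treatment (or, alternatively, using $W^{1,2}(\Omega)\hookrightarrow L^p(\Omega)$ for finite $p$ with $uf(w)$ bounded in the dual exponent) repairs this harmless but real slip, at no extra cost since Lemma \ref{lemma5.2} still provides the time integrability.
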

\begin{proof} For any $\varphi\in  C_0^{\infty}(\Omega)$, multiplying the first
equation in \eqref{2.5} by $\varphi$ and integrating by parts over $\Omega$ yields
\begin{equation*}
\begin{array}{rl}
|\displaystyle\int_\Omega  u_{\varepsilon t}\varphi| &= \left|\displaystyle\int_\Omega
\varepsilon\nabla(u_{\varepsilon}+1)^{M}\cdot\nabla\varphi
+\nabla(u_{\varepsilon}(u_{\varepsilon}+\varepsilon )^{m-1}v_{\varepsilon}^{-\alpha})\cdot\nabla\varphi+
\beta u_{\varepsilon}f(w_{\varepsilon})
\varphi\right|\\[3mm]
&\leq \displaystyle\int_\Omega ( M(u_{\varepsilon}+1)^{M-1}|\nabla u_{\varepsilon}|+
mv_{\varepsilon}^{-\alpha}(u_{\varepsilon}+\varepsilon )^{m-1}|\nabla u_{\varepsilon}|+\alpha (u_{\varepsilon}+1)^m v_{\varepsilon}^{-\alpha-1}|\nabla v_{\varepsilon}|)|\nabla \varphi|\\[3mm]
&+
 \displaystyle\beta\int_{\Omega}|u_{\varepsilon}f(w_{\varepsilon})|\|\varphi\|_{L^\infty(\Omega)}\\
&\leq c_1 (\left\{\displaystyle \int_{\Omega} |\nabla ( u_\varepsilon+\varepsilon)^{\frac{m+1}{2}}|^2\right\}^{\frac 12}+\displaystyle\left\{ \int_{\Omega} |\nabla v_\varepsilon|^2\right\}^{\frac 12})\|\varphi\|_{W^{1,2}(\Omega)}+ \beta\displaystyle\int_{\Omega}u_\varepsilon f(w_\varepsilon)\|\varphi\|_{L^\infty(\Omega)}
\end{array}
\end{equation*}
with $c_1>0$ independent of $\varphi$ and $\varepsilon$, where we have used the boundedness of $u_\varepsilon$ and $v_\varepsilon$.

As in the considered two-dimensional setting we have $ W^{1,2}(\Omega)\hookrightarrow L^\infty(\Omega)$,
the above inequality implies that
$$
\|
 u_{\varepsilon t}(\cdot,t)
 \|_{(W_0^{1,2}(\Omega))^*}\leq c_1 (
 \left\{\displaystyle \int_{\Omega} |\nabla (u_\varepsilon+\varepsilon)^{\frac{m+1}{2}}|^2\right\}^{\frac 12}+\displaystyle\left\{ \int_{\Omega} |\nabla v_\varepsilon|^2\right\}^{\frac 12}) +\beta \displaystyle\int_{\Omega}u_\varepsilon f(w_\varepsilon)
$$
for all $t> 3$  and hence for all $ T> 4$,
\begin{equation*}
\displaystyle\int^T_3\|
 u_{\varepsilon t}(\cdot,t)
 \|^2_{(W^{1,2}(\Omega))^*}dt\leq  c_2\left(\int^T_3\displaystyle \int_{\Omega} |\nabla (u_\varepsilon+\varepsilon)^{\frac{m+1}{2}}|^2
 +
 \displaystyle \int^T_3\int_{\Omega} |\nabla v_\varepsilon|^2+
\displaystyle\int^T_3\int_{\Omega}u_\varepsilon f(w_\varepsilon)\right)
\end{equation*}
which together with \eqref{5.19} leads to
$$
\int^\infty_3\|
 u_{\varepsilon t}(\cdot,t)
 \|^2_{(W_0^{1,2}(\Omega))^*}dt
 \leq c_3
$$
with $c_3>0$ independent of $\varepsilon$. Hence  \eqref{5.24} results from  lower semi-continuity of the norm in the Hilbert space 
$L^2((3,\infty);(W_0^{1,2}(\Omega))^*) $ with respect to weak convergence.
\end{proof}
Thanks to above estimates,  we adapt the argument in \cite{Warma} to show that $u$ actually stabilizes  toward  $\overline{u_0}+\beta \overline{w_0}$ in the claimed sense beyond in the weak-$*$ sense in  $L^\infty(\Omega)$.
\begin{lemma}\label{lemma5.6}
Let $m>1,\alpha>0$ and suppose that  $D\geq D_0$. Then we have
\begin{equation}\label{5.26}
\|u(\cdot,t)-u_{\star}\|_{L^{\infty}(\Omega)}\rightarrow 0\quad \textnormal{\emph{as}}~~t\rightarrow \infty,
\end{equation}
where $u_{\star}=\frac{1}{|\Omega|}\int_{\Omega} u_0+\frac{\beta}{|\Omega|}\int_{\Omega}w_0$.	
\end{lemma}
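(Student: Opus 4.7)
The plan is to run a subsequence--compactness scheme that first pins down the only possible limit of $u(\cdot,t)$ as $u_\star$ via mass balance and the spatial/temporal homogenization forced by Lemmas \ref{lemma5.3} and \ref{lemma5.5}, and then upgrades subsequential $L^\infty$ convergence into convergence along the entire net via the parabolic H\"older regularity of Lemma \ref{lemma5.1}.

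The starting point is the $L^1$ stabilization $\int_\Omega u(\cdot,t) \to u_\star|\Omega|$, which is immediate from Lemma \ref{Lemma2.2} and the $L^\infty$ decay of $w$ in Lemma \ref{lemma5.4}, since $\int_\Omega u(\cdot,t) = \int_\Omega u_0 + \beta\int_\Omega w_0 - \beta\int_\Omega w(\cdot,t)$. To prove \eqref{5.26}, I would argue by contradiction: assuming it fails, pick $\epsilon_0 > 0$ and $t_k \to \infty$ with $\|u(\cdot,t_k) - u_\star\|_{L^\infty(\Omega)} \geq \epsilon_0$, and consider the shifts $u_k(x,\tau) := u(x, t_k + \tau)$ on $\overline\Omega \times [0,1]$. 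The uniform H\"older bound of Lemma \ref{lemma5.1} renders $\{u_k\}$ precompact in $C(\overline\Omega \times [0,1])$, so after extraction $u_k \to u_\infty$ uniformly. Because $u$ is uniformly bounded by Lemma \ref{lemma4.4}, the map $s \mapsto s^{(m+1)/2}$ is Lipschitz on the relevant range, whence also $u_k^{(m+1)/2} \to u_\infty^{(m+1)/2}$ uniformly, and in particular strongly in $L^2(\Omega \times (0,1))$.

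The identification of $u_\infty$ is the crux. By the tail behavior in Lemma \ref{lemma5.3},
\begin{equation*}
\int_0^1 \int_\Omega |\nabla u_k^{(m+1)/2}|^2 \, dx\, d\tau \;=\; \int_{t_k}^{t_k+1} \int_\Omega |\nabla u^{(m+1)/2}|^2 \;\longrightarrow\; 0,
\end{equation*}
so $\nabla u_k^{(m+1)/2} \to 0$ strongly in $L^2$; together with the strong $L^2$ convergence of $u_k^{(m+1)/2}$ itself, this forces $u_\infty^{(m+1)/2} \in L^2((0,1); W^{1,2}(\Omega))$ with $\nabla u_\infty^{(m+1)/2} \equiv 0$, i.e.\ $u_\infty(\cdot,\tau)$ is spatially constant for a.e.\ $\tau$. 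Similarly Lemma \ref{lemma5.5} yields
\begin{equation*}
\int_0^1 \|(u_k)_\tau(\cdot,\tau)\|_{(W^{1,2}_0(\Omega))^*}^2 \, d\tau \;=\; \int_{t_k}^{t_k+1} \|u_t(\cdot,t)\|_{(W^{1,2}_0(\Omega))^*}^2 \, dt \;\longrightarrow\; 0,
\end{equation*}
so $(u_\infty)_\tau \equiv 0$ distributionally on $\Omega \times (0,1)$. Hence $u_\infty$ is a genuine constant, which the $L^1$ stabilization pins down as $u_\star$. Evaluating the uniform convergence at $\tau=0$ gives $u(\cdot,t_k) = u_k(\cdot,0) \to u_\star$ in $L^\infty(\Omega)$, contradicting the choice of $(t_k)$.

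The delicate point I anticipate is the passage to the limit through the nonlinear composition $s \mapsto s^{(m+1)/2}$ together with the bookkeeping needed to make \emph{both} vanishing integrals on the shifted window $[t_k, t_k+1]$ genuinely force $u_\infty$ to be constant in space \emph{and} time: the uniform $L^\infty$ bound on $u$ from Lemma \ref{lemma4.4} is what makes the composition argument clean, while the two space--time integrability statements of Lemmas \ref{lemma5.3} and \ref{lemma5.5} (inherited by $u$ from the approximants $u_\varepsilon$ by Lemma \ref{Lemma2.1} and lower semicontinuity) are precisely the two pieces needed to rule out nontrivial $x$- and $\tau$-dependence in $u_\infty$.
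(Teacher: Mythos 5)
Your argument is correct, and it reaches \eqref{5.26} by a route that is organized quite differently from the paper's, even though both rest on the same four ingredients: the uniform H\"older bounds of Lemma \ref{lemma5.1}, the two vanishing space--time integrals of Lemmas \ref{lemma5.3} and \ref{lemma5.5}, and the mass identity from Lemma \ref{Lemma2.2} combined with the decay of $w$ from Lemma \ref{lemma5.4}. The paper proceeds in two stages: it first proves only weak-$*$ convergence $u(\cdot,t)\stackrel{\mathrm{w}^*}{\rightharpoonup}u_\star$ in $L^\infty(\Omega)$ by a duality/averaging contradiction argument, in which the $u_t$-decay of Lemma \ref{lemma5.5} is used to replace $u(\cdot,t_k)$ by its average over $[t_k,t_k+1]$, the Poincar\'e inequality together with \eqref{5.8} is used to compare $u$ with the nonlinear mean $\mu(t)=\bigl(\frac1{|\Omega|}\int_\Omega u^{\frac{m+1}{2}}\bigr)^{\frac2{m+1}}$ as in \eqref{5.18}--\eqref{5.29}, and the mass identity plus $w$-decay identifies $\int_0^1\mu_k\to u_\star$; it then upgrades weak-$*$ to uniform convergence by a localization argument exploiting the uniform continuity of $u$ and nonnegative test functions supported in small balls. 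You instead apply Arzel\`a--Ascoli directly to the shifted family $u_k=u(\cdot,t_k+\cdot)$ on $\overline\Omega\times[0,1]$, use the vanishing tails of \eqref{5.8} and \eqref{5.24} to show the uniform limit is constant in $x$ and $\tau$, and pin the constant down by the convergence of $\int_\Omega u(\cdot,t)$; evaluating at $\tau=0$ gives the contradiction. This buys a one-pass proof of $L^\infty$ convergence with no weak-$*$ intermediate step, no $\mu_k$ bookkeeping and no localization; in fact, in your scheme Lemma \ref{lemma5.5} is even dispensable, since spatial constancy of $u_\infty(\cdot,\tau)$ together with $\int_\Omega u(\cdot,t_k+\tau)\to u_\star|\Omega|$ already identifies $u_\infty\equiv u_\star$ at every time slice, whereas in the paper's route the $u_t$-decay is genuinely needed to pass from the fixed time $t_k$ to the time average. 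What the paper's route buys in exchange is that it avoids the compactness extraction, working only with explicit integral inequalities and test functions. One small bookkeeping point you should make explicit when writing this up: the mass identity $\int_\Omega u(\cdot,t)+\beta\int_\Omega w(\cdot,t)=\int_\Omega u_0+\beta\int_\Omega w_0$ is stated in Lemma \ref{Lemma2.2} for the approximations $(u_\varepsilon,w_\varepsilon)$ and must be passed to the limit via the convergences of Lemma \ref{Lemma2.1} (the paper does the analogous step silently in \eqref{5.33}), and similarly $u^{\frac{m+1}{2}}(\cdot,t)\in W^{1,2}(\Omega)$ with the bound \eqref{5.8} is inherited from $(u_\varepsilon+\varepsilon)^{\frac{m+1}{2}}$ by weak lower semicontinuity; neither is a gap, just a line each.
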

\begin{proof}According to Lemma \ref{lemma5.3} and Lemma \ref{lemma5.5}, one can conclude that
\begin{equation}\label{5.27}
u(\cdot,t)\stackrel{\mathrm{w}^*}{\rightharpoonup}  u_{\star}\quad \hbox {in}~ L^\infty(\Omega)~~\textnormal{\emph{as}}~~t\rightarrow \infty
\end{equation}
In fact, if this conclusion does not hold, then
one can find a
sequence $(t_k)_{k\in \mathbb{N}} \subset (0, \infty)$ such that $ t_k \rightarrow\infty$ as $k \rightarrow\infty$, and   some
$\tilde{\psi}\in  L^1(\Omega) $ such that
$$
\int_\Omega u(x, t_k)\tilde{\psi}dx-\int_\Omega u_{\star}\tilde{\psi}dx\geq c_1
~\hbox{for all}~k\in\mathbb{N} $$
with some $c_1 > 0$. Furthermore, by the boundedness of $u$ and  the density of $C_0^\infty(\Omega )$ in $L^1(\Omega)$, we can choose
$\psi\in  C_0^\infty(\Omega )$ closing $\tilde{\psi}$ in  $L^1(\Omega)$ enough that
\begin{equation*}
\int_\Omega u(x, t_k)\psi dx-\int_\Omega u_{\star}\psi dx\geq \frac{3c_1}4
~~~\hbox{for all}~k\in\mathbb{N}.
\end{equation*}
and then
\begin{equation}\label{5.28}
\displaystyle \int^{t_k+1}_{t_k}\int_\Omega u(x, t)\psi dxdt-\displaystyle \int^{t_k+1}_{t_k}\int_\Omega u_{\star}\psi dxdt\geq \frac{c_1}2
~~~\hbox{for all sufficently large }~k\in\mathbb{N},
 \end{equation}
where we have used the fact that
 \begin{equation*}
 \begin{array}{rl}
&\left|\displaystyle \int^{t_k+1}_{t_k}\int_\Omega( u(x, t)-u(x, t_k))\psi dx\right|
\\[2mm]
=&\left|\displaystyle \int^{t_k+1}_{t_k}\int^t_{t_k}\langle u_t(\cdot,s), \psi(\cdot)\rangle ds dt\right|\\
\leq & \displaystyle \int^{t_k+1}_{t_k}\int^t_{t_k}\|u_t(\cdot,s)\|_{(W^{1,2}_0(\Omega))^*}
ds dt\cdot\|\psi\|_{W^{1,2}_0(\Omega)} \\[4mm]
\leq & \displaystyle \int^{t_k+1}_{t_k}\left\{\int^t_{t_k}\|u_t(\cdot,s)\|^2_{(W^{1,2}_0(\Omega))^*}ds\right\}^{\frac12}|t-t_k|^{\frac12}
 dt\cdot\|\psi\|_{W^{1,2}_0(\Omega)} \\[4mm]
 \leq & \left\{\displaystyle \int^{t_k+1}_{t_k}\int^t_{t_k}\|u_t(\cdot,s)\|^2_{(W^{1,2}_0(\Omega))^*}ds
 dt \right \} ^{\frac12}\cdot\|\psi\|_{W^{1,2}_0(\Omega)} \\[4mm]
 \leq & \left\{\displaystyle \int^\infty_{t_k}\|u_t(\cdot,s)\|^2_{(W^{1,2}_0(\Omega))^*}ds
\right \} ^{\frac12}\cdot\|\psi\|_{W^{1,2}_0(\Omega)}\\
\longrightarrow &0~~\hbox{as}~~k \rightarrow\infty,
  \end{array}
 \end{equation*}
due to Lemma \ref{lemma5.5}.

Let $\mu(t)=\left(\frac{1}{|\Omega|}\int_{\Omega}u^{\frac{m+1}{2}}(\cdot,t)\right)^{\frac{2}{m+1}}$. Then as in \eqref{5.18}, we have
\begin{equation*}
\overline{u_0}^{m-1}\int_{\Omega}|u(\cdot,t)-\mu(t)|^2\leq \int_{\Omega}|u^{\frac{m+1}{2}}(\cdot,t)-\mu^{\frac{m+1}{2}}(t)|^2\leq c_5\int_{\Omega}|\nabla u^{\frac{m+1}{2}}|^2	
\end{equation*}
and thus
\begin{equation}\label{5.29}
\overline{u_0}^{m-1}\displaystyle \int^{t_k+1}_{t_k}\int_{\Omega}|u(\cdot,t)-\mu(t)|^2\leq  c_5\displaystyle \int^{t_k+1}_{t_k}\int_{\Omega}|\nabla u^{\frac{m+1}{2}}(\cdot,t)|^2.	
\end{equation}
We now introduce $$  u_k(x, s) := u(x, t_k + s), (x, s)\in\Omega\times (0, 1) $$
and $$  \mu_k(x, s) := \mu(x, t_k + s), (x, s)\in\Omega\times (0, 1)  $$
for $k\in\mathbb{N}$.  Then \eqref{5.29} implies that
\begin{equation*}
\begin{array}{rl}
\overline{u_0}^{m-1}\displaystyle \int^{1}_{0}\int_{\Omega}|u_k(\cdot,s)-\mu_k(s)|^2ds\leq &
c_5\displaystyle \int^{t_k+1}_{t_k}\int_{\Omega}|\nabla u^{\frac{m+1}{2}}(\cdot,t)|^2\\
& \longrightarrow 0~~\hbox{as }~~k\rightarrow\infty,
\end{array}
\end{equation*}
due to \eqref{5.8} in Lemma \ref{lemma5.3}. This means that
\begin{equation}\label{5.30}
u_k(x,s)-\mu_k(s)\rightarrow 0~~\hbox{in} ~~L^2(\Omega\times (0,1))~~\hbox{as}~~k\rightarrow\infty,
\end{equation}
which in particular allows us to get
 \begin{equation}\label{5.31}
\displaystyle \int^{1}_{0}\int_{\Omega}(u_k(\cdot,s)-\mu_k(s)) \psi (\cdot) ds\rightarrow 0~ \hbox{as}~~k\rightarrow\infty
\end{equation}
as well as
\begin{equation}\label{5.32}
\displaystyle \int^{1}_{0}\int_{\Omega}(u_k(\cdot,s)-\mu_k(s)) ds\rightarrow 0~ \hbox{as}~~k\rightarrow\infty.
\end{equation}
thanks to the weak convergence of $L^2(\Omega\times (0,1))$.
  Moreover, by Lemma \ref{lemma5.4}, we have
\begin{equation*}
 \displaystyle \int^{t_k+1}_{t_k}\int_{\Omega}w(\cdot,t)dt\leq |\Omega|\|w(\cdot,t_k)\|_{L^\infty(\Omega)}
 \rightarrow 0~ \hbox{as}~~k\rightarrow\infty
 \end{equation*}
and thereby
 \begin{equation}\label{5.33}
\begin{array}{rl}
  |\Omega|\displaystyle\int^{1}_{0} \mu_k(s) ds=&\displaystyle \int^{1}_{0}\int_{\Omega}u_k(\cdot,s)ds-
\displaystyle \int^{1}_{0}\int_{\Omega}(u_k(\cdot,s)-\mu_k(s)) ds\\
=& |\Omega|u_*-\beta\displaystyle \int^{t_k+1}_{t_k}\int_{\Omega}w(\cdot,t)dt
-
\displaystyle \int^{1}_{0}\int_{\Omega}(u_k(\cdot,s)-\mu_k(s)) ds\\
\rightarrow & |\Omega|u_*
~ \hbox{as}~~k\rightarrow\infty
 \end{array}
 \end{equation}
due to \eqref{5.32} and \eqref{5.20}.

Therefore from  \eqref{5.28},   \eqref{5.31} and \eqref{5.33}, it follows that
\begin{equation*}
\begin{array}{rl}
\displaystyle\frac{c_1}2 \leq & \displaystyle\int^{t_k+1}_{t_k}\int_\Omega u(\cdot, t)\psi(\cdot) dt-\displaystyle \int^{t_k+1}_{t_k}\int_\Omega u_{\star}\psi (\cdot) dt
\\[4mm]
=& \displaystyle \int^{1}_{0}\int_\Omega (u_k(\cdot, s)-\mu_k(s)) \psi(\cdot)ds+\int^{1}_{0}\int_\Omega \mu_k(s) \psi(\cdot)ds
 -\displaystyle  u_{\star}\int_\Omega \psi(\cdot) \\[3mm]
= & \displaystyle \int^{1}_{0}\int_\Omega (u_k(\cdot, s)-\mu_k(s)) \psi(\cdot)ds+\int^{1}_{0} \mu_k(s) ds\int_\Omega \psi(\cdot)
 -u_{\star} \displaystyle  \int_\Omega \psi(\cdot) \\
\rightarrow &0~ \hbox{as}~~k\rightarrow\infty,
 \end{array}
\end{equation*}
which is absurd and hence proves that actually \eqref{5.27} is valid.

Let us suppose on the contrary that \eqref{5.26} be  false. Then without loss of generality  there exist sequence $\{x_k\}_{k\in \mathbb{N}}$ and $\{t_k\}_{k\in \mathbb{N}}\in (0,\infty)$
with $t_k\rightarrow \infty$ as $k\rightarrow \infty$ such that for some $c_1>0$
$$
u(x_k,t_k)-u_*=\max_{x\in \Omega}|u(x,t_k)-u_*|\geq c_1~~\hbox{for all} ~k\in  \mathbb{N}.
$$
In view of the compactness of $\overline\Omega$, where passing to subsequences we can find $x_0\in \overline\Omega$ such that  $x_k\rightarrow x_0$ as $k\rightarrow \infty$. Furthermore,
because $u$ is uniformly continuous in $\bigcup_{~k\in  \mathbb{N}}(\overline\Omega\times t_k)$,
 this
entails that one can extract a further subsequence if necessary such that
 $$
u(x,t_k)-u_*\geq  \frac{c_1}2 ~~~\hbox{for all}~ x\in B:=B_{\delta}(x_0)\cap \Omega~ \hbox{and}~k\in \mathbb{N}
$$
for some $\delta>0$. Noticing that if $ x_0\in \partial\Omega $, the smoothness of $\partial\Omega$  ensures the existence of $\hat{x}_0\in \Omega$
and a smaller $\hat{\delta}>0$ such that $B_{\hat\delta}(\hat x_0)\subset B$.
Now  taking the nonnegative function $\psi\in C^\infty_0(B_{\hat\delta}(\hat x_0)))$  such as a smooth truncated function in $B_{\hat\delta}(\hat x_0))$, we then have
$$
\int_\Omega (u(x, t_k)-u_{\star} )\psi dx=\int_{B_{\hat\delta}(\hat x_0)}(u(x, t_k)-u_{\star} )\psi dx\geq \frac{c_1}2\cdot \int_\Omega \psi dx,
$$
which contradicts \eqref{5.27}  and hence proves the lemma.
\end{proof}

\subsection{Stabilization of $v$}
In what follows, based on  the uniform H\"{o}lder bounds of $v$ and decay of $\nabla v $ implied by \eqref{5.2} and \eqref{5.9} respectively,
we shall show the   corresponding stabilization result for $v$ by a contradiction argument.
\begin{lemma}\label{lemma5.7}
Let $m>1$ and $(u,v,w)$ be the solution  of \eqref{1.10}--\eqref{1.12} obtained in Lemma \ref{Lemma2.1}. Then we have
\begin{equation}\label{5.34}
\|v(\cdot,t)-u_{\star}\|_{L^{\infty}(\Omega)}\rightarrow 0\quad \textnormal{\emph{as $t\rightarrow \infty$}}.	
\end{equation}	
\end{lemma}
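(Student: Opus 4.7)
My plan is to argue by contradiction along the lines suggested by the authors. Suppose \eqref{5.34} fails; then one can extract $c_0>0$ and a sequence $t_k\to\infty$ such that $\|v(\cdot,t_k)-u_{\star}\|_{L^{\infty}(\Omega)}\geq c_0$ for all $k\in\mathbb{N}$. The uniform parabolic regularity from Lemma \ref{lemma5.1} provides a $k$-independent bound on $v$ in $C^{2+\kappa,1+\kappa/2}(\overline{\Omega}\times[t_k,t_k+1])$, so by Arzel\`a--Ascoli and a diagonal extraction a subsequence (still denoted $t_k$) satisfies $v(\cdot,t_k)\to v_{\infty}$ in $C^1(\overline{\Omega})$ for some $v_{\infty}\in C^1(\overline{\Omega})$ with $\|v_{\infty}-u_{\star}\|_{L^{\infty}(\Omega)}\geq c_0$.

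Next I would combine Lemma \ref{lemma5.3} with the same regularity to show $\|\nabla v(\cdot,t)\|_{L^2(\Omega)}\to 0$ as $t\to\infty$. Lemma \ref{lemma5.1} yields uniform H\"older control of $\nabla v$ in both variables on $\overline{\Omega}\times[t,t+1]$ for all $t>t_0$, hence the scalar map $t\mapsto\|\nabla v(\cdot,t)\|_{L^2(\Omega)}^2$ is uniformly continuous on $(t_0,\infty)$; coupled with its integrability \eqref{5.9}, this forces it to tend to $0$. Passing to the $C^1$ limit along $t_k$ gives $\nabla v_{\infty}\equiv 0$, so $v_{\infty}$ is a constant on $\overline{\Omega}$.

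It remains to identify this constant with $u_{\star}$. Integrating the second equation of \eqref{1.10} over $\Omega$ shows that the spatial mean $\bar v(t):=|\Omega|^{-1}\int_{\Omega}v(\cdot,t)$ satisfies the linear ODE $\bar v'(t)=\bar u(t)-\bar v(t)$, where $\bar u(t):=|\Omega|^{-1}\int_{\Omega}u(\cdot,t)\to u_{\star}$ thanks to Lemma \ref{lemma5.6}. A variation-of-constants computation then yields $\bar v(t)\to u_{\star}$. On the other hand, the uniform convergence $v(\cdot,t_k)\to v_{\infty}$ forces $\bar v(t_k)\to v_{\infty}$ because $v_{\infty}$ is constant, whence $v_{\infty}=u_{\star}$, in contradiction with $\|v_{\infty}-u_{\star}\|_{L^{\infty}(\Omega)}\geq c_0$. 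The step I expect to require the most care is the second one: upgrading the mere integrability statement \eqref{5.9} to pointwise-in-time decay of $\|\nabla v(\cdot,t)\|_{L^2}$ really does need the full space--time H\"older bound \eqref{5.2}, and one has to make sure that Lemma \ref{lemma5.1} actually gives uniform continuity (not just boundedness) of $t\mapsto\|\nabla v(\cdot,t)\|_{L^2(\Omega)}^2$ on $(t_0,\infty)$.
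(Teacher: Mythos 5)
Your argument is correct, and it reaches the conclusion by a genuinely different mechanism than the paper. The paper also argues by contradiction from \eqref{5.2} and \eqref{5.9}, but it never extracts a limit function: using the uniform H\"older bound it produces a ball $B_r(x_k)$ and a time window $(t_k,t_k+\tau)$ on which $|v-u_\star|>c_1/2$, hence a lower bound for $\int_{t_k}^{t_k+\tau}\int_\Omega|v-u_\star|^2$, and then contradicts this via the Poincar\'e inequality
$\int_{t_k}^{t_k+\tau}\int_\Omega|v-u_\star|^2\le C\int_{t_k}^{t_k+\tau}\int_\Omega|\nabla v|^2+C\int_{t_k}^{t_k+\tau}\int_\Omega|\overline{v(\cdot,t)}-u_\star|^2$, since the first term on the right is a tail of the convergent integral \eqref{5.9}. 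In particular the paper only needs decay of the time-averaged tail of $\|\nabla v\|_{L^2}^2$, not the pointwise-in-time decay you establish, so it can skip the Barbalat-type step (uniform continuity of $t\mapsto\|\nabla v(\cdot,t)\|_{L^2}^2$) and the Arzel\`a--Ascoli extraction; your version pays that extra cost but is self-contained on the point where the paper is terse, namely the convergence of the spatial mean: your ODE argument $\bar v'=\bar u-\bar v$ with $\bar u(t)\to u_\star$ (or, equivalently, mass conservation \eqref{2.13} together with Lemma \ref{lemma5.4}) is exactly what is needed to handle the term $\int_{t_k}^{t_k+\tau}\int_\Omega|\overline{v(\cdot,t)}-u_\star|^2$, which the paper's proof uses but does not spell out. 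Your cautionary remark about upgrading \eqref{5.9} to pointwise decay is well placed; the uniform continuity does follow from \eqref{5.2} (the parabolic H\"older norm controls the time modulus of $\nabla v$, or alternatively interpolate the time increment of $v$ against the bounded second derivatives), so the step goes through. Both routes are valid; yours is slightly longer but makes every ingredient explicit, while the paper's is shorter by working directly with space--time integrals.
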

\begin{proof} 
 Combined  the uniform H\"{o}lder bounds of $v$ and decay of $\nabla v $ implied by \eqref{5.2} and \eqref{5.9}, respectively,
\eqref{5.34} may be  derived by  a contradiction argument. Indeed, assume that  (\ref{5.34}) was false, then we can find a sequence $(t_k)_{k\in\mathbb{N}}$ with $t_k\rightarrow \infty$ as $k\rightarrow \infty$,  and constant $c_1>0$ such that
\begin{equation*}
\|v(\cdot,t_{k})-u_{\star}\|_{L^{\infty}}\geq c_1.	
\end{equation*}
Furthermore the  uniform H\"{o}lder continuity of $v$ in $\Omega\times[t,t+1]$  warrants  the existence of  $(x_k)_{k\in\mathbb{N}}$ and $r>0$ such that
\begin{equation*}
|v(x,t)-u_{\star}|>\frac{c_1}{2}	
\end{equation*}
for every $x\in B_{r}(x_k)$	and $t\in(t_k,t_k+\tau)$ and hence
\begin{equation}\label{5.37}
\int^{t_k+\tau}_{t_{k}}\int_{\Omega}|v(\cdot,t)-u_{\star}|^2>\frac{|\Omega|\tau c^2_1}{4}.	
\end{equation}

On the other hand, the Poincar\'e inequality indicates
\begin{equation}\label{5.38}
\int^{t_k+\tau}_{t_k}\int_{\Omega}|v(\cdot,t)-u_{\star}|^2\leq C\int^{t_k+\tau}_{t_k}\int_{\Omega}|\nabla v|^2+C
\int^{t_k+\tau}_{t_k}\int_{\Omega}|\overline{v(\cdot,t)}-u_{\star}|^2.
\end{equation}
Therefore \eqref{5.38} yields a contradiction to \eqref{5.37} thanks to
$$
\int^{t_k+\tau}_{t_k}\int_{\Omega}|\nabla v|^2\rightarrow 0 \quad\textnormal{as $t_k\rightarrow \infty$}.	
$$ \end{proof}

Now the convergence result in the flavor of Theorem 1.1 has actually been proved
already.

\textbf{Proof of Theorem 1.1} The claimed assertion in Theorem 1.1 is the consequence of  Lemma \ref{lemma5.4},  \ref{lemma5.6} and \ref{lemma5.7}.
\section{Acknowledgments}
 This work is supported by the NNSF of
China (No.12071030) and Beijing key laboratory on MCAACI.

\vspace{1em}

\end{document}